\documentclass[oneside,a4paper,11pt,reqno]{amsart}
\textheight=23cm
\textwidth=16cm
\voffset=-1.5cm
\hoffset=-1.5cm
\parskip=2mm

\usepackage{amssymb,amsmath,amsthm}
\usepackage{graphics,graphicx}

\newcommand{\Z}{{Z\!\!\!Z}}

\newtheorem{hypo}{Hypothesis}

\newtheorem{prop}[hypo]{Proposition}

\newtheorem{thm}[hypo]{Theorem}

\newtheorem{lem}[hypo]{Lemma}

\newtheorem{defi}[hypo]{Definition}

\newtheorem{nota}[hypo]{Notation}

\newtheorem{rqe}[hypo]{Remark}

\newtheorem{coro}[hypo]{Corollary}

\newtheorem{exa}[hypo]{Example}

\DeclareMathOperator{\mdeg}{mdeg}
\DeclareMathOperator{\val}{val}
\DeclareMathOperator{\base}{Base}
\DeclareMathOperator{\hess}{Hess}
\DeclareMathOperator{\Idd}{Id}

\def\B{\mathcal{B}}

\def\F{\mathcal{F}}
\def\Z{\mathcal{Z}}

\def\PP{\mathbb{P}}

\def\CC{\mathbb{C}}

\newcommand {\refeq}[1] {(\ref{#1})}

\title{On caustics by reflection of algebraic surfaces}
\date\today

\author{Alfrederic Josse}
\address{Universit\'e de Brest,
UMR CNRS 6205, LMBA,
6 avenue Le Gorgeu, 29238 Brest cedex, France}
\email{alfrederic.josse@univ-brest.fr}
\author{Fran\c{c}oise P\`ene}
\address{Universit\'e de Brest,
UMR CNRS 6205, LMBA,
6 avenue Le Gorgeu, 29238 Brest cedex, France}
\email{francoise.pene@univ-brest.fr}

\subjclass[2000]{14H50,14E05,14N05,14N10}
\keywords{caustic, class, polar, intersection number, pro-branch\\
Fran\c{c}oise P\`ene is supported by the french ANR project GEODE (ANR-10-JCJC-0108)}
\begin{document}

\begin{abstract}
Given a point $ S$ (the light position)  in $\mathbb P^3$  and an algebraic surface $\mathcal Z$ (the mirror)  of $\mathbb P^3$, 
the caustic by reflection $\Sigma_{ S}(\mathcal Z)$ 
of $\mathcal Z$ from $ S$ is the Zariski closure of the 
envelope of the reflected lines $\mathcal R_m$ got by reflection of $(Sm)$ on $\mathcal Z$ at $m\in\mathcal Z$.
We use the ramification method to identify $\Sigma_{ S}(\mathcal Z)$ with the Zariski closure of the
image, by a rational map, of an algebraic 2-covering space of $\mathcal Z$. 
We also give a general formula for the degree (with multiplicity) of caustics (by reflection) of 
algebraic surfaces of $\mathbb P^3$.
\end{abstract}

\maketitle
\section*{Introduction}
Let $ S[x_0:y_0:z_0:t_0]\in\mathbb P^3:=\mathbb P(\mathbf{W})$
(with $\mathbf {W}$ a 4-dimensional complex vector space)
and let $\mathcal Z=V(F)$ be a surface of $\mathbb P^3$
given by some $F\in Sym^d(\mathbf{W}^\vee)$
(i.e. $F$ corresponds to a polynomial of degree $d$
in $\mathbb C[x,y,z,t]$).
The {\bf caustic by reflection} $\Sigma_{S}(\mathcal Z)$ of $\mathcal Z$ from $ S\in\mathbb P^3$
is the Zariski closure of the envelope of the reflected lines $\mathcal R_m$ of the
lines $(m S)$ after reflection at $m$ on the mirror surface $\mathcal Z$.

Since the seminal work of von Tschirnhaus \cite{Tschirnhaus1,Tschirnhaus2}, caustics by reflection of planar curves have been studied namely by Chasles \cite{Chasles}, Quetelet \cite{Quetelet} and Dandelin \cite{Dandelin}. Let us also mention the work of Bruce, Giblin and Gibson \cite{BG,BGG1,BGG2} in the real case.
A precise computation of the degree and class of caustics by reflection of planar
algebraic curves has been done in \cite{fredsoaz1,fredsoaz2,fredsoaz3}. 
The idea was based on the fact that the caustic by reflection of an irreducible algebraic curve 
$\mathcal C$ of $\mathbb P^2$ from source $ S_0\in\mathbb P^2$ is the Zariski closure of the image of $\mathcal C$
by a rational map. 
Moreover, in the planar case,
the generic birationality of the caustic map has been established
in \cite{fredsoaz3,Catanese}.
The study of caustics by reflection of algebraic surfaces
is more delicate. We will see that a generic point $m$ of  $\mathcal Z$ is associated to two (instead of a single one) 
points on $\Sigma_{ S}(\mathcal Z)$.

A classical way to study envelopes is the ramification theory. Let us mention that this approach has been used namely  by Trifogli in \cite{Trifogli}
and by Catanese and Trifogli in \cite{CataneseTrifogli} for focal loci (which generalize the notion of evolute to higher dimension).
We use here the ramification theory to construct the caustic by reflection $\Sigma_{ S}(\mathcal Z)$ and to identify it with 
the Zariski closure of the image by some rational map ${\Phi}$, of an algebraic 2-covering space $\hat Z$ of $\mathcal Z$. 
We will see that, contrary to the case of caustics by reflection
of planar curves, {\bf the set of base points of $\Phi_{|\hat Z}$ is never empty}. 
We give a general formula expressing the degree (with multiplicity) $\mdeg$ of $\Sigma_{ S}(\mathcal Z)$ in terms of intersection numbers of $\Z$
with a particular curve (called reflected polar curve) computed at the projection on $\Z$ of the base points of $\Phi_{|\hat Z}$.
As a consequence of our general result, we prove namely the following generic result (see Theorem \ref{generique} for precisions).
\begin{thm}\label{thmGENE}
Let $d\ge 1$. For a generic irreducible surface $\mathcal Z\subset\mathbb P^3$ of degree $d$ and for a generic $S\in\mathbb P^3$, we have
$$\mdeg \Sigma_S(\mathcal Z)=d(d-1)(8d-7) .$$
\end{thm}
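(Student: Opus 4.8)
The plan is to specialize the general degree formula (which expresses $\mdeg\Sigma_S(\mathcal Z)$ as a sum over the projections to $\mathcal Z$ of the base points of $\Phi_{|\hat Z}$ of local intersection numbers of $\mathcal Z$ with the reflected polar curve) to the case of a generic surface $\mathcal Z=V(F)$ of degree $d$ and a generic source $S$. The first step is to recall that the base points of $\Phi_{|\hat Z}$ arise from two geometrically meaningful loci: the points where the reflected line $\mathcal R_m$ is undefined or indeterminate, and the ramification points of the $2$-covering $\hat Z\to\mathcal Z$. For generic data these should be the points $m\in\mathcal Z$ lying on the contact curve of the cone of tangent lines from $S$ (the polar-type locus where $Sm$ is tangent to $\mathcal Z$, so the incident ray grazes the mirror) together with the points where $m$, its normal, and $S$ are in special position. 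I would first identify these loci set-theoretically, check that for generic $(\mathcal Z,S)$ they are reduced and consist of finitely many points counted by a Bezout/Chern-class computation, and confirm that $\Phi_{|\hat Z}$ is generically birational onto $\Sigma_S(\mathcal Z)$ so that $\mdeg$ really is computed by the stated formula with the expected multiplicities.

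The second step is the enumerative computation proper. Using the earlier identification of $\hat Z$ as an explicit algebraic surface (a double cover cut out inside a projective bundle over $\mathcal Z$ by the reflection equations) and of $\Phi$ by explicit coordinates, I would write down the class of the reflected polar curve as a divisor class on $\mathcal Z$ in terms of $d$ and the hyperplane class $h$, so that its degree is a polynomial in $d$. The local intersection numbers at each base point, for generic $(\mathcal Z,S)$, should all equal the generic value $1$ (transverse intersection) except possibly along a distinguished component — one anticipates the $8d-7$ factor to come from adding a "polar-curve degree" contribution of order $d$ (the $8d$ piece, essentially a curve of degree $2d$ or $4d$ met with something of the same order) to a correction of constant order $-7$ coming from the mandatory base points where the reflected line degenerates (the umbilic-type or singular-contact points). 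So the skeleton is: $\mdeg\Sigma_S(\mathcal Z) = \deg(\mathcal Z)\cdot\deg(\text{reflected polar}) - (\text{fixed base-point defect})$, and one checks that $\deg(\mathcal Z)=d$, $\deg(\text{reflected polar}) = (d-1)(8d-7)$ (or a product that rearranges to this), after discarding the excess intersection localized at the forced base points.

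The third step is to verify genericity: that the "generic surface of degree $d$" in $\mathbb P^{\binom{d+3}{3}-1}$ and the generic $S\in\mathbb P^3$ avoid all the bad loci (non-reduced contact curve, extra base points, failure of birationality of $\Phi_{|\hat Z}$, non-transversality of $\mathcal Z$ with the reflected polar curve away from the forced base points). This is where Theorem~\ref{generique}, invoked in the statement, is doing the real work, so I would reduce to citing it: one exhibits a single pair $(\mathcal Z_0, S_0)$ — e.g. a sufficiently general member, or a carefully chosen explicit example such as a generic quadric for $d=2$ checked by hand — for which all the openness conditions hold, and then semicontinuity of $\mdeg$ plus constructibility of each bad locus gives the generic statement. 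For $d=1$ the surface is a plane, the caustic is a point (the reflection of $S$), and $d(d-1)(8d-7)=0$ matches trivially, which is a useful sanity check.

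The main obstacle, I expect, is \emph{not} the Bezout bookkeeping but establishing that the forced base points of $\Phi_{|\hat Z}$ contribute exactly a \emph{constant} excess (independent of $d$) to the intersection number — i.e. controlling the local structure of $\hat Z$, the reflected polar curve, and $\mathcal Z$ simultaneously at those points, since these are precisely the points the general formula must handle with care. Showing the local intersection multiplicity there is a fixed number (yielding the $-7$, after the product $d(d-1)(8d)$ is corrected) requires a genuine local analysis via pro-branches at each such point, and ruling out, for generic data, any further unexpected base points or non-reduced structure. Everything else — the global divisor-class computation and the genericity argument — is comparatively routine once the local picture at the base points is pinned down.
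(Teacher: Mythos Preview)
Your overall architecture matches the paper's: apply the general degree formula $\mdeg\Sigma_S(\mathcal Z)=d\cdot\deg(\mathcal P_{A,B})-\sum_{m\in\mathcal B}i_m(\mathcal Z,\mathcal P_{A,B})$, identify $\mathcal B$ for generic data, and compute each local intersection number. But the specific enumerative claims in your second paragraph are wrong, and the error is not cosmetic.

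First, the reflected polar curve has degree $(d-1)(10d-9)$, not $(d-1)(8d-7)$; this is Proposition~\ref{degreepolar}. Second, for generic $(\mathcal Z,S)$ the set $\mathcal B$ is \emph{not} a handful of umbilic-type points contributing a constant defect: by Corollary~\ref{B} and Remark~\ref{Bgene}, generically $\mathcal B=\mathcal B_0=V(F,\Delta_{\mathbf S}F,Q(\nabla F))$, the locus of points where $\mathcal Z$ has an \emph{isotropic} tangent plane through $S$. This is a complete intersection of degrees $d$, $d-1$, $2(d-1)$, hence consists of $2d(d-1)^2$ points. Your description of $\mathcal B$ as the polar contact locus $V(F,\Delta_{\mathbf S}F)$ omits the isotropy condition $Q(\nabla F)=0$ and would give a curve, not a finite set. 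The paper's local analysis (Theorem~\ref{generique}) shows that each of these $2d(d-1)^2$ points contributes $i_m(\mathcal Z,V(K_1,K_2))=1$, so the correction is $2d(d-1)^2$, cubic in $d$, and
\[
\mdeg\Sigma_S(\mathcal Z)=d(d-1)(10d-9)-2d(d-1)^2=d(d-1)(8d-7).
\]
Your expectation that the base-point defect is ``constant order $-7$'' would have sent the local computation in the wrong direction; the $-7$ in $8d-7$ arises from $10d-9-2(d-1)$, not from a fixed finite set of special points.
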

We denote by $\mathcal H^\infty$ the plane at infinity of $\PP^3$:
$\mathcal H^\infty=\{[x:y:z:t]\in\PP^3\ :\ t=0\}$
and $\mathcal Z_\infty:=\mathcal Z\cap\mathcal H^\infty$.
In this study the {\bf umbilical curve} $\mathcal C_\infty$
plays a particular role.
Recall that $\mathcal C_\infty$ is the intersection of $\mathcal H^\infty$ with any sphere (see Section \ref{orthogonal}).

In practice, the degree of the caustic will namely depend on 
the position of $S$ with respect to the surface $\mathcal Z$,
to $\mathcal H^\infty$, to $\mathcal C_\infty$ and to 
the {\bf isotropic} tangent planes to $\mathcal Z$
(see Section \ref{orthogonal} for the notion of isotropic planes).

We illustrate this by a precise study of the degrees of caustics of a paraboloid $\mathcal Z$. In this case, $\mathcal Z_\infty$ is the union of two lines
intersecting at the focal point at infinity.
We will see that the caustic of the paraboloid is a surface if
the light position is outside $\mathcal Z_\infty$ and outside the focal points of $\mathcal Z$. 

Still in the case of the paraboloid, $\mathcal Z\cap\mathcal C_\infty$ is made of
two points $I$ and $J$ and the
tangent planes to $\mathcal Z$ at these two points are isotropic.
Moreover the revolution axis $\mathcal D$ of the paraboloid is the intersection
of these two tangent planes.
\begin{prop}\label{prop:paraboloid}
Let $\mathcal Z$ be the paraboloid $V(x^2+y^2-2zt)\subset\mathbb P^3$ of axis $\mathcal D=V(x,y)$ and let $S\in\mathbb P^3$.

If $S$ is a focal point of the paraboloid (either $F_1[0:0:1:0]$ or $F_2[0:0:1:2]$), then $\Sigma_S(\mathcal Z)$ is reduced to the
other focal point.

If $S\in\mathcal Z_\infty\setminus\{F_1\}$, then $\Sigma_S(\mathcal Z)$ is a planar curve of degree 2.

If $S\in \mathcal D$, then
the degree of $\mdeg\Sigma_S(\mathcal Z)=4$ if $S[0:0:0:1]$ and
$\mdeg\Sigma_S(\mathcal Z)=6$ elsewhere.

Assume now that $S$ is not on $\mathcal D\cup\mathcal Z_\infty\cup\{F_2\}$.

If $S$ is neither at infinity nor on the paraboloid $\mathcal Z$, then:
\begin{itemize}
\item 
If  $x_0^2+y_0^2=0$  and $z_0\ne t_0/2$ (i.e. if $S$ is on $\mathcal T_I\mathcal Z$
or on $\mathcal T_J\mathcal Z$ and on two other isotropic tangent planes to $\mathcal Z$), then $\mdeg\Sigma_S(\mathcal Z)=14$.
\item If $x_0^2+y_0^2=0$  and $z_0=t_0/2$ (i.e. if $S$ is on $\mathcal T_I\mathcal Z$ or on $\mathcal T_J\mathcal Z$ and on another isotropic tangent plane to $\mathcal Z$), then $\mdeg\Sigma_S(\mathcal Z)=12$.
\item 
If $x_0^2+y_0^2\ne 0$ and $x_0^2+y_0^2+(z_0- t_0/2)^2= 0$
(i.e. if $S$ is on three isotropic planes but neither on $\mathcal T_I\mathcal Z$ nor on  $\mathcal T_J\mathcal Z$), then $\mdeg \Sigma_S (\mathcal Z)=17$.
\item Otherwise (generic case: $S$ is on four isotropic tangent planes to $\mathcal Z$), then $\mdeg \Sigma_S (\mathcal Z)=18$.
\end{itemize}

If $S$ is at infinity, then:
\begin{itemize}
\item If $S\not\in \mathcal C_\infty$, then
 $\mdeg\Sigma_{S}(\mathcal Z)=12$.
\item 
If $S\in\mathcal C_\infty$, then 
$\mdeg\Sigma_S(\mathcal Z)=6$.
\end{itemize}

If $S$ is on $\mathcal Z$, then:
\begin{itemize}
\item 
If  $x_0^2+y_0^2+t_0^2=0$ (i.e. if the tangent plane to $\mathcal Z$ at $S$ is isotropic), then $\mdeg\Sigma_S(\mathcal Z)=12$.
\item 
If $x_0^2+y_0^2=0$ (i.e. if $S$ is on $\mathcal T_I\mathcal Z$
or on $\mathcal T_J\mathcal Z$), then $\mdeg\Sigma_S(\mathcal Z)=14$.
\item Otherwise $\mdeg\Sigma_S(\mathcal Z)=16$.
\end{itemize}

\end{prop}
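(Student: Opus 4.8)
The proof is a case-by-case application of the general formula for $\mdeg$ established above, specialised to the smooth quadric $\mathcal Z=V(x^2+y^2-2zt)$ (so $d=2$), together with a direct treatment of the positions of $S$ for which $\Sigma_S(\mathcal Z)$ is not a surface. The first step is to make the general construction completely explicit here: one fixes a rational parametrisation of the quadric $\mathcal Z$, writes the reflected line $\mathcal R_m$ at $m$ by the reflection formula with respect to $S$, and thereby obtains the $2$-covering $\hat Z\to\mathcal Z$ and the rational map $\Phi_{|\hat Z}$ in closed form. Since the polar of a quadric with respect to $S$ is a plane, the polar curve of $\mathcal Z$ at $S$ is a conic and the reflected polar curve is again explicit; consequently the base points of $\Phi_{|\hat Z}$, whose local data govern $\mdeg$ by the main theorem, can be located by inspection --- and some of them lie on $\mathcal Z_\infty$ and at $I$, $J$, in contrast with what happens for a generic quadric.

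The second step reduces the number of parameters: $\mathcal Z$ is invariant under the complex rotations about its axis $\mathcal D=V(x,y)$, and these transformations are compatible with the caustic construction, so $\mdeg\Sigma_S(\mathcal Z)$ depends only on the orbit of $S$. The orbit is detected by the vanishing pattern of the quantities occurring in the statement: $t_0$ (is $S$ at infinity), $F(S)=x_0^2+y_0^2-2z_0t_0$ (is $S$ on $\mathcal Z$), $x_0^2+y_0^2$ (is $S$ on $\mathcal T_I\mathcal Z\cup\mathcal T_J\mathcal Z$), $z_0-t_0/2$ (is $S$ in the plane through $F_2$ orthogonal to $\mathcal D$), $x_0^2+y_0^2+(z_0-t_0/2)^2$ (is $S$ on the isotropic cone of the focus $F_2$), $x_0^2+y_0^2+z_0^2$ with $t_0=0$ (is $S\in\mathcal C_\infty$), and $x_0^2+y_0^2+t_0^2$ (is the tangent plane $\mathcal T_S\mathcal Z$ isotropic, relevant when $S\in\mathcal Z$). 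It therefore suffices to compute one representative per stratum.

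In the third step one runs the general formula. For $S$ off all the distinguished loci the formula gives $\mdeg\Sigma_S(\mathcal Z)=d(d-1)(8d-7)=18$, which is Theorem~\ref{thmGENE} for $d=2$ and the last sub-case of the list. For the other positions one computes, at the projection on $\mathcal Z$ of each base point of $\Phi_{|\hat Z}$, the local intersection number of $\mathcal Z$ with the reflected polar curve; these numbers jump as $S$ specialises onto $\mathcal H^\infty$, $\mathcal C_\infty$, an isotropic tangent plane, the points $I$, $J$, or $\mathcal Z$. The two decisive local computations are (i) at the two points of $\mathcal Z_\infty$ --- which for the paraboloid is the pair of isotropic lines through $F_1$, a degeneration absent for a generic quadric --- where $\Phi_{|\hat Z}$ acquires extra base points, and (ii) at $I$ and $J$, where the tangent plane is isotropic. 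Carrying (i)--(ii) through in each configuration of the invariants above produces the values $14,12,17,18$ for affine $S\notin\mathcal Z$, the values $12$ and $6$ for $S$ at infinity according as $S\notin\mathcal C_\infty$ or $S\in\mathcal C_\infty$, and the values $12,14,16$ for $S\in\mathcal Z$.

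It remains to treat the positions where $\Sigma_S(\mathcal Z)$ degenerates. If $S$ is a focal point, the collapse of the caustic to the other focal point is the classical reflection property of the paraboloid: a pencil of rays from $F_2$ reflects onto the pencil of lines through $F_1$, and symmetrically, so the envelope of the reflected lines is a single point. If $S\in\mathcal Z_\infty\setminus\{F_1\}$, the explicit model of the first step shows that the reflected lines lie in one plane and envelope a conic of it, whence a plane curve of degree $2$. If $S\in\mathcal D$, every meridian plane through $\mathcal D$ is a plane of symmetry of $\mathcal Z$, so the reflected line of a point of $\mathcal Z$ lying in such a plane stays in it; hence $\Sigma_S(\mathcal Z)$ is the surface of revolution swept by the caustic of the meridian parabola with respect to $S\in\mathcal D$, and the planar computation (or the general formula with the extra symmetry taken into account) gives $\mdeg=6$, dropping to $4$ at the vertex $[0:0:0:1]$, where a further base component appears. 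The main difficulty throughout is the bookkeeping of the base points of $\Phi_{|\hat Z}$ in the third step --- in particular those escaping to $\mathcal Z_\infty$ and to $I,J$ --- and the way their contributions vary as $S$ specialises, which is precisely what separates the close sub-cases $14$ versus $12$ and $17$ versus $18$; the degenerate cases of the last step, by contrast, rest on classical projective and optical facts and need only short verifications.
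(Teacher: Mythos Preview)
Your outline follows the paper's approach closely: the same symmetry reduction to representatives, the same application of Theorem~\ref{lemmefondamental} via intersection numbers of $\mathcal Z$ with the reflected polar at the base points of $\Phi_{|\hat Z}$, and the same direct treatment of the degenerate positions (focal points via the optical property, $S\in\mathcal Z_\infty$ via Proposition~\ref{cas1}, $S\in\mathcal D$ via the revolution theorem). As a strategy it is correct.

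There is, however, one genuine technical step you have skipped that the paper needs and that your sketch does not account for. When $S\in\mathcal Z$, one has $\hess_F(\mathbf S,\mathbf S)=x_0^2+y_0^2-2z_0t_0=0$, so $\Delta_{\mathbf S}F$ divides each of $\alpha,\beta,\gamma$; in particular $\mathcal Z\cap B_{S,\mathcal Z}$ contains the whole curve $V(F,\Delta_{\mathbf S}F)$, and the hypotheses of Theorem~\ref{lemmefondamental} fail as stated. The paper fixes this by replacing $Q_{\mathbf S,F}$ by $\tilde Q_{\mathbf S,F}:=Q_{\mathbf S,F}/\Delta_{\mathbf S}F$ (so $\tilde\alpha=1$, $\tilde\beta=4(t_0z+z_0t+t_0t)$, $\tilde\gamma=-4N_{\mathbf S}$), rebuilding a reflected polar $\tilde{\mathcal P}_{A,B}=V(K_1,\tilde K_2,\tilde K_3)$ of degree $8$ rather than $11$, and then applying the intersection-number count to this modified object. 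Without this reduction you cannot reach the values $12,14,16$ --- the naive formula $22-\sum i_m$ will not close up, because the ``base locus'' you would be subtracting is one-dimensional. Your third step should flag this explicitly for the $S\in\mathcal Z$ stratum.

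Two smaller points of precision. First, your phrase ``the two points of $\mathcal Z_\infty$'' is misleading: $\mathcal Z_\infty$ is a pair of lines, and the relevant base points at infinity are the specific points $C=[t_0:it_0:x_0+iy_0:0]$, $D=[t_0:-it_0:x_0-iy_0:0]$ (or $F_1$ when $t_0=0$) identified in Proposition~\ref{baseparaboloide}; the delicate local computations happen there and at the affine points $m_{\pm1}$, not generically along $\mathcal Z_\infty$. Second, in the $S\in\mathcal D$ case the drop from $6$ to $4$ at $[0:0:0:1]$ is because this vertex lies on the parabola (hence on $\mathcal Z$), so $S$ becomes a base point of the planar caustic map --- your phrase ``a further base component'' is right in spirit but should be tied to this.
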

The paper is organized as follows. 
Section \ref{orthogonal} is devoted to
the (complex) projectivization 
of orthogonality in the real euclidean affine 3-space (which plays a crucial role in the present work) and its link with the umbilical curve.
In Section \ref{reflected}, we 
construct the reflected lines. In Section \ref{ramification}, we use the
reflected lines and the ramification method to define the
caustic by reflection. In Section \ref{map}, we define the 
appropriate 2-covering $\hat Z$ of $\Z$ and the rational map $\Phi$.
In Section \ref{base}, we determine precisely the base points of  $\Phi_{|\hat Z}$. 
We define the reflected polar in section \ref{polar}
and use it in Section \ref{degree} to establish a formula for the degree of the caustic by reflection. In Section \ref{proofGENE},
we prove Theorem \ref{thmGENE}. 
In Section \ref{sec:paraboloid}, we prove Proposition \ref{prop:paraboloid}.
In Section \ref{bundle}, we precise a significative difference between the caustic by reflection studied in this paper and the focal loci of generic
varieties considered in \cite{Trifogli,CataneseTrifogli}.
In appendix \ref{decompose}, we study two families of caustics by reflection of surfaces
which are related to caustics by reflection of planar curves. 
\section{Affine and projective perpendicularity, link with umbilical conjugation}\label{orthogonal}
Consider the real euclidean affine 3-space 
${E}_{3}$ of
direction the 3-vector space $\mathbf E_{3}$ (endowed with some
fixed basis).
Let $\mathbf W:=(\mathbf E_3\oplus \mathbb R)\otimes\mathbb C$
(endowed with the induced basis).
Let $j:
{E}_{3}\hookrightarrow \mathbb P^3:=\mathbb{P}(\mathbf W)$ be the natural
map defined 
on coordinates by $j(x,y,z):=[x:y:z:1]$ for
every $\underline{m}(x,y,z)\in {E}_{3}$. 
We are interested ‫in
the interpretation
in the plane at infinity  of $\mathbb{P}^{3}$ of perpendicularity
at a point of two affine subvarieties of ${E}_{3}$.
Consider the two following quadratic forms
$$q(x,y,z)=x^2+y^2+z^2\ \mbox{on}\ \mathbf E_3\otimes \mathbb C \ \ \ \mbox{and}
\ \ \ Q(x,y,z,t)=x^2+y^2+z^2 \ \mbox{on}\ \mathbf W.$$
\begin{defi}
The \textbf{
umbilical curve} of $\mathbb P^3$ is the irreducible conic $\mathcal{C}_{\infty }:=V(Q_{|\mathcal H^\infty})\cong V(q) 
\subset\mathbb P(\mathbf E_3\otimes \mathbb C)$.
We call \textbf{cyclic point} any point of $\mathcal C_\infty$.
\end{defi}
We recall that every  (complex projectivized) sphere
contains $\mathcal{C}_{\infty }$.
It is worth noting that, for every $\underline{m}\in E_{3}$, we have the following classical diagram
$$
\begin{array}{ccccc}
 E_{3} & \overset{j}{\hookrightarrow } & \mathbb{P}(\mathbf{W}) & \overset{\Pi }{\longleftarrow } & \mathbf{W}\backslash \{0\}
\\ 
  & \overset{\xi_{\underline{m}}}{\searrow } &  &  &  \\ 
  &  & \mathcal{H}^{\infty } &  & 
\end{array}
$$
where $\Pi $ is the canonical projection and with 
$\xi_{\underline{m}}$ is
defined on coordinates by  
$\xi_{\underline{m}}(\underline{m}+(x,y,z))=[x:y:z:0]$. 
Given any vector subspace $\mathbf V\subset\mathbf E_3$,
the projective subspace  $\mathcal{V}:=\overline{j(\underline{m}+\mathbf V)}$ 
of $\mathbb P^3$ (where $\overline K$ denotes the Zariski closure of $K$) is the complex
projectivization of the affine subspace $V=\underline{m}+\mathbf V$ of $E_{3}$. We observe that  $\xi_{\underline{m}}(V)$ is
$\mathcal{V}_{\infty }:=\mathcal V\cap\mathcal H^\infty$.

An affine line ${L}$ (resp. an affine  plane ${H}$)
containing $\underline{m}\in {E}_{3}$ is
 defined by $\underline{m}+V_{1}$ (resp. $\underline{m}+V_{2}$) 
with $V_{i}$ an $i$-dimensional subspace of  $\mathbf E_{3}$.  Recall that the
(complex) projectivization  $\mathcal{L}$ of ${L}$ 
(resp. $\mathcal{H}$ of ${H)}$ is the projective line (resp. plane) of
$\mathbb P^3$ of equations obtained by homogeneization of the 
equations of $L$ (resp. $H$).

Hence, two lines ${L},L'$ containing $\underline{m}$ are
perpendicular at $\underline{m}$ if and only if their points at infinity are conjugated with respect the conic 
$\mathcal{C}_{\infty }$ .

A line ${L}$ and a plane ${H}$ containing $\underline{m}$ are
perpendicular if and only if $\mathcal H_{\infty }$ is the \textbf{polar} of $
\ell_{\infty }$ with respect to the conic $\mathcal{C}_{\infty }$ in
$\mathcal{H}^\infty\cong \mathbb{P}^{2}$.
This leads to the following definition of projective normal lines to a plane.
\begin{defi}
Let $\mathcal H=V(h)\subset\mathbb P^3$ (with $h\in\mathbf{W}^\vee\setminus\{\mathbf{0}\}$) be a projective plane and $m\in\mathcal H\setminus\mathcal H^\infty$.
The normal line $\mathcal N_m(\mathcal H)$ to $\mathcal H$ at $m$ is the line containing $m$
and $n_\infty(\mathcal H):=\Pi(\boldsymbol{\kappa}(\nabla h))$
with $\boldsymbol{\kappa}:\mathbf{W}\rightarrow\mathbf{W}$
defined on coordinates by $\boldsymbol{\kappa}(a,b,c,d):=
(a,b,c,0)$.
\end{defi}
\begin{rqe}
Given a projective plane $\mathcal H\subset\mathbb P^3$
($\mathcal H\ne\mathcal H^\infty$), if $n_{\infty}(\mathcal H)=[u:v:w:0]$ lies on the
umbilical (i.e. $(u,v,w)$ lies on the \textbf{isotropic cone} $V(q)$ in ${\mathbf E}_{3}\otimes \mathbb C$), 
then the line $\mathcal H_{\infty }$
is \textbf{tangent} to $\mathcal{C}_{\infty }$ at $n_\infty(\mathcal H)$
in $\mathcal{H}^\infty$. In this case we have $\mathcal N_m(\mathcal H)\subset \mathcal H$.
\end{rqe}
Let $m=\Pi(\mathbf{m})$ be a non singular point of $\mathcal Z\setminus \mathcal H^\infty$.
We write $\mathcal{T}_{m}(\mathcal{Z})$ for the {\bf projective
tangent plane} at $m$ to $\mathcal Z$. 
We also define the {\bf projective normal line} $\mathcal{N}_{m}(\mathcal{Z})$ at $m$ to $\mathcal Z$ is the projective normal
line to $\mathcal{T}_{m}(\mathcal{Z})$ at $m$, i.e. 
$\mathcal{N}_{m}(\mathcal{Z})$ is
the line containing $m$ and $n_{\infty,m}(\mathcal Z)
=\Pi(\boldsymbol{\kappa}(\nabla F(\mathbf{m})))$. 

Observe that the line at infinity  
$\mathcal T_{\infty,m }(\mathcal Z)$ of $\mathcal{T}_{m}(\mathcal{Z})$ is the \textbf{polar} of the point at infinity 
$n_{\infty,m }(\mathcal Z)$ of $\mathcal{N}_{m}(\mathcal{Z})$ with respect the conic 
$\mathcal{C}_{\infty }$.

Later, we will see that the base points of the reflected map can be seen
on the geometry on the normals at infinity with respect to  the umbilical. In particular isotropic tangent plane to $\mathcal Z$
containing $S$ will play some role.
\begin{defi}
A plane $\mathcal H=V(h)$ (with $h\in \mathbf{W}^\vee\setminus\{\mathbf{0}\}$) is said to be {\bf isotropic
} if $\nabla h$ is an isotropic vector for $Q$.
\end{defi}
\begin{rqe}
A plane $\mathcal H\subset
\mathbb P^3$ is isotropic if and only if either 
it is the plane at infinity $\mathcal H^\infty$ or if $n_\infty(\mathcal H)$
is in $\mathcal C_\infty$ (i.e. $\mathcal H$ contains its normal lines). 
\end{rqe}
In particular, the surface $\mathcal{Z}$ admits an isotropic tangent plane at one
of its nonsingular point $m[x:y:z:1]$ if and only if 
$m$ belongs to $V(Q(\nabla F),F)$. 
We note that the whole curve $\mathcal{C}_{\infty }$ is contained
in every complex projectivized sphere $
\mathcal{S}_{r}$ and that we  have $\mathcal{N}_{m}({\mathcal{S}_{r}})\subset \mathcal{
T}_{m}({\mathcal{S}_{r}})$ for all $m\in \mathcal{S}_{r}\setminus \mathcal{H}^\infty.$ This is also true for tori.

\bigskip Consider some particular points on $\mathcal{Z}$, playing a
particular role in the construction of the caustic map. Let $\mathcal 
B_0:=V(F,\Delta
_{\mathbf{S}}F,Q(\nabla F))$ in $\mathbb P^3$, the interpretation in the plane at infinity is the following one.
Let $m$ be a nonsingular point of 
$\mathcal{Z}\setminus \mathcal{H}^\infty$ then 
\begin{equation}\label{equiv}
m\in 
\mathcal{B}_0\Longleftrightarrow \left\{ 
\begin{array}{c}
S\in \mathcal{T}_{m}(\mathcal{Z}) \\ 
n_{\infty,m }(\mathcal Z)\in \mathcal{C}_{\infty }%
\end{array}%
\right. \Longleftrightarrow \left\{ 
\begin{array}{c}
(m S)\subset \mathcal T_{m}(\mathcal{Z}) \\ 
n_{\infty,m }(\mathcal Z)\in \mathcal{C}_{\infty }%
\end{array}%
\right. {\Longleftrightarrow }\left\{ 
\begin{array}{c}
(m S)_\infty\in \mathcal T_{\infty,m}(\mathcal Z)\\
\mathcal T_{\infty,m }(\mathcal Z)=\mathcal T_{n_{\infty,m}(\mathcal Z)}(\mathcal C_\infty)
\end{array}%
\right. .
\end{equation}
We observe that  $\mathcal{B}_0$ is in general a finite set, but that, for the unit sphere, $\mathcal{B}_0$ is a curve (the circle apparent contour of $\mathcal{Z}$ seen from $S$).

Let us now specify some additional notations used in this paper. 
We write $\mathbf{S}(x_0,y_0,z_0,t_0)\in\mathbf W\setminus\{0\}$.
For any $m[x:y:z:t]\in\mathbb P^3$, we will write $\mathbf{m}(x,y,z,t)\in \mathbf{W}\setminus\{0\}$.
For any $d'\ge 1$ and any $G \in Sym^{d'}(\mathbf{W}^\vee)$,
we write as usual $G_x,G_y,G_z,G_t\in Sym^{d'-1}(\mathbf{W}^\vee)$ for the partial derivatives of in $x$, $y$, $z$ and $t$ respectively.

\section{Reflected lines}\label{reflected}
The incident lines are the lines $(S\, m)$ with $m\in \mathcal Z$. 
We will define the reflected line $\mathcal R_m$ as the orthogonal symmetric of
$(S\, m)$ with respect to the tangent plane to $\Z$ at $m$.
To this end, we will define the orthogonal symmetric $\sigma(m)$ of $S$ with respect to the tangent plane to $\Z$ at $m$.
Let us first explain how one can give a sense to the notion of orthogonal symmetries in $\mathbb P^3$ by
complex projectivization of the euclidean affine situation.
\subsection{Orthogonal symmetric and map $\sigma$}
To every injective linear map $
\mathbf{W} \overset{f}{\rightarrow }  \mathbf{W}$,
corresponds a unique morphism
$\mathbb P(\mathbf{W}) \overset{\mathbb{P(}f)}\rightarrow   
\mathbb{P}(\mathbf{W})$.
Therefore, to every injective affine map 
$E_{3} \overset{g}{\rightarrow } E_{3}$, corresponds a unique algebraic map $
\mathbb P(\mathbf{W})  \overset{\iota(g)}\rightarrow   
\mathbb{P}(\mathbf{W})$.
This defines an injective groups homomorphism
$\iota:Aff(E_{3})\cong \mathbf{E}_3\rtimes Gl(\mathbf{E}_3)   
\rightarrow \mathbb{P}(Gl(\mathbf{W}))$
such that $
\iota(Is(E_{3}))=\iota(\mathbf{E}_3\rtimes O({\mathbf E}_ 3))
\subset \mathbb{P}(O(\hat Q))$, with $\hat Q=x^2+y^2+z^2+t^2$
on $\mathbf W$.
We apply this to the orthogonal symmetry $s_{H}$
with respect to some affine plane $H=V(\tilde h)\subseteq E_3$ with $\tilde h=ax+by+cz+d$. Recall that $s_H$ is 
defined by $s_{H}(P)=P-2\, \tilde h(P)\frac{\nabla \tilde h}{q(\nabla \tilde h))}$. This leads to the morphism $s_{\mathcal{H}}:=\iota (s_{h}):\mathbb P^3\rightarrow\mathbb P^3$
defined by $\mathbb P(\mathbf s_{h})$ with
$$\forall \mathbf P\in \mathbf{W},\ \ \ 
{\mathbf s}_{h}(\mathbf{P)}:=Q(\nabla h)\cdot\mathbf{P}
-2h(\mathbf{P})\cdot\boldsymbol{\kappa}(\nabla h)\in\mathbf{W},$$
with $\mathcal H=V(h)\subset\mathbb P^3$ and with $h=ax+by+cz+dt$ the homogeneized of $\tilde h$.
Now we extend this definition to any projective plane $\mathcal H\subset\mathbb P^3$ as follows.
\begin{defi}
Consider a plane $\mathcal H=V(h)\subseteq\mathbb P^3$
(with $h\in \mathbf{W}^\vee\setminus\{\mathbf{0}\}$).
We define the orthogonal symmetry $s_{\mathcal H}$ with respect
to $\mathcal H$ as the rational map given by $s_{\mathcal H}=\mathbb P
(\mathbf{s}_h)$ with
$$\forall\mathbf{P}\in \mathbf{W},\ \ \ 
\mathbf{s}_{h}(\mathbf P):=Q(\nabla h)\cdot \mathbf{P}-2
h(\mathbf{P})\cdot\boldsymbol{\kappa}(\nabla h)\in \mathbf{W}.$$
\end{defi}
We can notice that, when $\mathcal H\ne \mathcal H^\infty$,
$s_{\mathcal H}(P)$ is well defined in $\mathbb P^3$ except if $\mathcal H$ is an isotropic plane containing $P$
(see Proposition \ref{baseR}).  
For any non singular 
$m[x:y:z:t]\in\mathcal Z$, we define 
$\sigma(m):=s_{\mathcal T_m\mathcal Z }(S)={\mathbb P}(\boldsymbol{\sigma})(m)$ with
\begin{equation}\label{defisigma}
\boldsymbol{\sigma}:=Q(\nabla F)\cdot
  \mathbf{S}-2\Delta_{\mathbf{S}}F\cdot \boldsymbol{\kappa}
(\nabla F)\in\mathbf {W}
\end{equation}
on $\Pi^{-1}(\mathcal Z)$
with $\Delta_{\mathbf S}F$ the equation of the polar hypersurface
of $\mathcal P_{S}(\mathcal Z)$ given by
$\Delta_{\mathbf{S}}F:=DF\cdot \mathbf{S}$ (where $DF$ is the differential of $F$).
We extend the definition of $\boldsymbol{\sigma}(\mathbf{m})$ 
to any $\mathbf{m}\in\mathbf{W}\setminus\{0\}$.
Observe that $\boldsymbol{\sigma}$ defines a unique rational map $\sigma:\mathbb P^3\rightarrow\mathbb P^3$.
\begin{prop}\label{baseR}
The base points of the rational map $\sigma_{|\mathcal Z}$ are the singular points of 
$\mathcal Z$,
the points of tangency of $\Z$ with $\mathcal H^\infty$ and the points at which $\mathcal Z$ has an isotropic 
tangent plane containing $S$.
\end{prop}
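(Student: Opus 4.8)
The map $\sigma_{|\mathcal Z}$ is $\mathbb P(\boldsymbol\sigma)$ restricted to $\mathcal Z$, where $\boldsymbol\sigma=Q(\nabla F)\cdot\mathbf S-2\Delta_{\mathbf S}F\cdot\boldsymbol\kappa(\nabla F)$. A point $m\in\mathcal Z$ is a base point precisely when $\boldsymbol\sigma(\mathbf m)=0$ (taking for $\mathbf m$ any lift, and using that $\mathcal Z$ is reduced so the scheme-theoretic base locus of $\sigma_{|\mathcal Z}$ is cut out by the coordinates of $\boldsymbol\sigma$ together with $F$), \emph{or} when $m$ is a point where the formula for $\boldsymbol\sigma$ is not the right local model, i.e. a singular point of $\mathcal Z$ (where $\nabla F(\mathbf m)=0$, hence $\boldsymbol\sigma(\mathbf m)=0$ automatically and moreover $\mathcal T_m\mathcal Z$, hence $s_{\mathcal T_m\mathcal Z}$, is not defined). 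So the first step is to observe that singular points of $\mathcal Z$ are always base points, and then to analyze, for a \emph{nonsingular} $m\in\mathcal Z$, exactly when $\boldsymbol\sigma(\mathbf m)=0$.

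**The nonsingular analysis.** Fix a nonsingular $m\in\mathcal Z$, so $\nabla F(\mathbf m)\neq 0$. I distinguish two cases according to whether $m\in\mathcal H^\infty$ or not. If $m\notin\mathcal H^\infty$ and $\mathcal T_m\mathcal Z\ne\mathcal H^\infty$: here $s_{\mathcal T_m\mathcal Z}$ is the genuine orthogonal symmetry, and the remark following the definition of $s_{\mathcal H}$ (together with Proposition/Remark on isotropic planes) tells us $s_{\mathcal H}(P)$ fails to be defined exactly when $\mathcal H$ is isotropic and contains $P$; applied with $\mathcal H=\mathcal T_m\mathcal Z$ and $P=S$, this is exactly the condition "$\mathcal Z$ has an isotropic tangent plane at $m$ containing $S$", i.e. $Q(\nabla F(\mathbf m))=0$ and $\Delta_{\mathbf S}F(\mathbf m)=0$ (the latter being $S\in\mathcal T_m\mathcal Z$, by the first equivalence in \eqref{equiv}). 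Indeed if $Q(\nabla F(\mathbf m))\neq 0$ then $\boldsymbol\sigma(\mathbf m)$ has nonzero component along $\mathbf S$ modulo $\boldsymbol\kappa(\nabla F)$ unless $\mathbf S\parallel\boldsymbol\kappa(\nabla F)$, and one checks directly that even then $\boldsymbol\sigma(\mathbf m)\ne 0$; while if $Q(\nabla F(\mathbf m))=0$, then $\boldsymbol\sigma(\mathbf m)=-2\Delta_{\mathbf S}F(\mathbf m)\,\boldsymbol\kappa(\nabla F(\mathbf m))$, which vanishes iff $\Delta_{\mathbf S}F(\mathbf m)=0$ or $\boldsymbol\kappa(\nabla F(\mathbf m))=0$. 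Since $\nabla F(\mathbf m)\neq 0$ and $Q(\nabla F(\mathbf m))=0$, the vector $\boldsymbol\kappa(\nabla F(\mathbf m))=0$ would force $\nabla F(\mathbf m)$ proportional to $(0,0,0,1)$, which has $Q\neq 0$ — contradiction; so $\boldsymbol\kappa(\nabla F(\mathbf m))\neq0$ and we are left with $\Delta_{\mathbf S}F(\mathbf m)=0$, i.e. exactly "isotropic tangent plane containing $S$". This handles the bulk of the statement.

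**The two boundary cases.** It remains to treat (i) nonsingular $m\in\mathcal H^\infty$, and (ii) nonsingular $m\notin\mathcal H^\infty$ with $\mathcal T_m\mathcal Z=\mathcal H^\infty$, i.e. $m$ a point of tangency of $\mathcal Z$ with $\mathcal H^\infty$; the claim is that (ii) always gives a base point and (i) never does (so that points at infinity of $\mathcal Z$ that are not tangency points with $\mathcal H^\infty$ are genuinely in the domain). For (ii): $\mathcal T_m\mathcal Z=\mathcal H^\infty=V(t)$ means $\nabla F(\mathbf m)$ is proportional to $(0,0,0,1)$, so $\boldsymbol\kappa(\nabla F(\mathbf m))=0$ and $Q(\nabla F(\mathbf m))=0$, whence $\boldsymbol\sigma(\mathbf m)=0$; moreover $s_{\mathcal H^\infty}$ is not an honest symmetry. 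For (i), $m\in\mathcal H^\infty$ nonsingular with $\mathcal T_m\mathcal Z\neq\mathcal H^\infty$: one must show $\boldsymbol\sigma(\mathbf m)\neq 0$ generically, i.e. that such $m$ is \emph{not} a base point — this is the point where I expect to have to be a little careful, since $m$ lies on $\mathcal H^\infty$ and the geometric picture of "reflecting $S$" degenerates there; but algebraically it suffices to exhibit that the vector $Q(\nabla F(\mathbf m))\,\mathbf S-2\Delta_{\mathbf S}F(\mathbf m)\,\boldsymbol\kappa(\nabla F(\mathbf m))$ is nonzero, which again reduces, when $Q(\nabla F(\mathbf m))=0$, to $\Delta_{\mathbf S}F(\mathbf m)\neq0$ together with $\boldsymbol\kappa(\nabla F(\mathbf m))\neq0$, and when $Q(\nabla F(\mathbf m))\neq0$ to a direct linear-independence check. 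The honest reading is that the statement of the proposition lists \emph{all} base points of $\sigma_{|\mathcal Z}$, and the content is the three explicit families; so in the write-up I would (a) show each listed family consists of base points, (b) show that outside these three families $\boldsymbol\sigma(\mathbf m)\ne0$ and the local formula is valid, using \eqref{equiv} and the remarks on isotropic planes to translate the algebraic conditions $\{Q(\nabla F)=0\}$, $\{\Delta_{\mathbf S}F=0\}$, $\{\nabla F=0\}$ into the stated geometric ones. The main obstacle is precisely case (i)/(ii) — pinning down the behaviour at $\mathcal H^\infty$ — together with making sure the edge computation "$Q(\nabla F(\mathbf m))\neq0\Rightarrow\boldsymbol\sigma(\mathbf m)\neq0$" is airtight (it uses that $\mathbf S\ne0$ and that $\mathbf S$, $\boldsymbol\kappa(\nabla F(\mathbf m))$ cannot conspire to cancel the $Q(\nabla F)\,\mathbf S$ term).
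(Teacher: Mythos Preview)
Your overall strategy---analyze when $\boldsymbol\sigma(\mathbf m)=0$ by cases on whether $Q(\nabla F(\mathbf m))$ vanishes---is exactly the paper's. But the execution has three concrete problems.

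First, a factual error: you write that $\nabla F(\mathbf m)$ proportional to $(0,0,0,1)$ ``has $Q\neq 0$''. This is false: $Q(x,y,z,t)=x^2+y^2+z^2$, so $Q(0,0,0,1)=0$. The correct reason $\boldsymbol\kappa(\nabla F(\mathbf m))\ne 0$ in your sub-case is simply that you assumed $\mathcal T_m\mathcal Z\neq\mathcal H^\infty$, which is equivalent to $(F_x,F_y,F_z)(\mathbf m)\ne 0$.

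Second, your case (ii) ``nonsingular $m\notin\mathcal H^\infty$ with $\mathcal T_m\mathcal Z=\mathcal H^\infty$'' is vacuous: the tangent plane always contains $m$, so $\mathcal T_m\mathcal Z=\mathcal H^\infty$ forces $m\in\mathcal H^\infty$. Points of tangency of $\mathcal Z$ with $\mathcal H^\infty$ live \emph{on} $\mathcal H^\infty$; they are exactly the nonsingular $m\in\mathcal Z$ with $F_x=F_y=F_z=0$ (hence $t=0$ by Euler). Your split by ``$m\in\mathcal H^\infty$ or not'' is therefore misaligned with the actual geometry and should be dropped.

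Third, and most importantly, the case $Q(\nabla F(\mathbf m))\ne 0$ is where the real work is, and you wave it away with ``one checks directly''. Here is the check (this is the paper's argument): if $\boldsymbol\sigma(\mathbf m)=0$ and $Q(\nabla F)\ne 0$, then $Q(\nabla F)\,\mathbf S=2\Delta_{\mathbf S}F\,\boldsymbol\kappa(\nabla F)$, so $\boldsymbol\kappa(\nabla F)$ is nonzero and proportional to $\mathbf S$; in particular $t_0=0$ and the $2\times 2$ minors $y_0F_x-x_0F_y$, $z_0F_y-y_0F_z$, $x_0F_z-z_0F_x$ all vanish. Plugging this into the first coordinate of $\boldsymbol\sigma$ gives
\[
\sigma^{(1)}=Q(\nabla F)\,x_0-2F_x(x_0F_x+y_0F_y+z_0F_z)=Q(\nabla F)\,x_0-2x_0(F_x^2+F_y^2+F_z^2)=-Q(\nabla F)\,x_0,
\]
and similarly $\sigma^{(2)}=-Q(\nabla F)\,y_0$, $\sigma^{(3)}=-Q(\nabla F)\,z_0$. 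Since $\mathbf S\ne 0$ and $t_0=0$, one of $x_0,y_0,z_0$ is nonzero, contradicting $\boldsymbol\sigma(\mathbf m)=0$. With this in hand, the base locus is exactly $V(F_x,F_y,F_z)\cup V(Q(\nabla F),\Delta_{\mathbf S}F)$, and restricting to $\mathcal Z$ yields precisely the three listed families---no separate analysis of ``$m$ at infinity'' is needed.
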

\begin{proof}
We prove that the base points of $\sigma$ are the
points of $\mathbb P^3$ such that
$F_x=F_y=F_z=0 $
or such that
$Q(\nabla F)=0$ and $\Delta_{\mathbf{S}}F=0$.
It is easy to see that these points are base points of $\sigma$.
Now let $m=[x:y:z:t]$ be a point of $\mathbb P^3$ such that $\sigma(m)=0$.
\begin{itemize}
\item If $\Delta_{\mathbf{S}}(F)=0$, then, since ${\mathbf{S}}\ne 0$, we get that $Q(\nabla F)=0$.
\item If $Q(\nabla F)=0$, then either $\Delta_{\mathbf{S}}F=0$ or $\boldsymbol{\kappa}(\nabla F)=0$.
\item Assume now that $Q(\nabla F)\ne 0$. We have
$Q(\nabla F)\cdot {\mathbf{S}}=2\Delta_{\mathbf{S}}F\cdot
\boldsymbol{\kappa}(\nabla F)$.
This implies that $\boldsymbol{\kappa}(\nabla F)$ is non zero and proportional to ${\mathbf{S}}$ (which is also non zero), so that
$t_0=0$ and $0=y_0F_x-x_0F_y=z_0F_y-y_0F_z=x_0F_z-z_0F_x$.
Therefore, writing $\sigma^{(i)}$ for the $i$th coordinate
of $\boldsymbol{\sigma}$, we have
\begin{eqnarray*}
0&=& \sigma^{(1)}= Q(\nabla F)x_0-2(x_0F_x^2+y_0F_xF_y+z_0F_xF_z)\\
&=& Q(\nabla F)x_0-2(x_0F_x^2+x_0F_y^2+x_0F_z^2)=-Q(\nabla F)x_0.
\end{eqnarray*}
In the same way, we get
$0=\sigma^{(2)}= -Q(\nabla F)y_0$ and $0=\sigma^{(3)}= -Q(\nabla F)z_0$.
This contradicts the fact that $Q(\nabla F)\ne 0$ (since ${\mathbf{S}}\ne 0$).
\end{itemize}
\end{proof}
\begin{rqe}
Each $\sigma^{(i)}$ belongs to $Sym^{2(d-1)}(\mathbf{W}^\vee)$.
Moreover, for a general $(\mathcal Z, S)$, the set $V(F,F_x,F_y,F_z)$ is empty
and the base points of $\sigma_{|\mathcal Z}$ are the $2d(d-1)^2$ points of
$V(F,Q(\nabla F),\Delta_{\mathbf{S}}F)$.
\end{rqe}
\subsection{Reflected lines}
\begin{defi}
For any $m\in\mathcal Z$, the {\bf reflected line} $\mathcal R_m$ on $\mathcal Z$ at $m$ is the line $(m\sigma(m))$ when it is well defined.
\end{defi}
\begin{defi}\label{defiM}
We write $\mathcal M_{S,\mathcal Z}$ for the set of points 
$m\in\mathbb P^3$ such that $\mathbf{m}$ and $\boldsymbol{\sigma}(\mathbf{m})$
are proportional, i.e.
$\mathcal M_{S,\mathcal Z}:=\{m\in\mathbb P^3\ :\ \exists [\lambda_0:\lambda_1]
\in{\mathbb P}^1,\ \ \lambda_0\cdot \mathbf{m}+\lambda_1\cdot \boldsymbol{\sigma}(\mathbf{m})
=0\}$.
\end{defi}
Observe that $\mathcal R_m$ is well defined if $m\in\mathcal Z\setminus \mathcal M_{S,\mathcal Z}$.
\begin{prop}\label{B1}
We have $\mathcal Z\cap \mathcal M_{S,\mathcal Z}=\mathcal Z\cap(\base(\sigma)\cup\{S\}
       \cup\mathcal W)$,
with
$$\mathcal W:=\{m\in\mathcal Z\ :\ 
m=n_{\infty,m}(\mathcal Z),\ \ \Delta_{\mathbf{S}}F(m)\ne 0,\ Q(m)=0\},$$
with $n_{\infty,m}(\mathcal Z):=\Pi(\boldsymbol{\kappa}(\nabla F(\mathbf{m})))$.
\end{prop}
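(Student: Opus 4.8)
The plan is to compute directly the set of $m\in\mathcal Z$ for which $\mathbf m$ and $\boldsymbol\sigma(\mathbf m)$ are proportional, i.e. $\mathbf m\wedge\boldsymbol\sigma(\mathbf m)=0$. First I would dispose of the trivial inclusions: if $m\in\base(\sigma)$ then $\boldsymbol\sigma(\mathbf m)=0$ and proportionality holds vacuously; if $m=S$ then $\boldsymbol\sigma(\mathbf m)=Q(\nabla F)\mathbf S-2\Delta_{\mathbf S}F\cdot\boldsymbol\kappa(\nabla F)$, and using $\Delta_{\mathbf S}F=DF\cdot\mathbf S$, one checks that at $m=S$ the vector $\boldsymbol\sigma(\mathbf S)$ is a combination of $\mathbf S$ and $\boldsymbol\kappa(\nabla F)$, where $\boldsymbol\kappa(\nabla F)$ at $S$ is (up to the isotropic-cone condition) itself proportional to... — more simply, $S\in\mathcal T_S\mathcal Z$ would need care, but since $\sigma(S)=s_{\mathcal T_S\mathcal Z}(S)$ is the reflection of $S$ and a point lies on its own reflecting plane's symmetric trivially when... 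I would instead just verify $\mathbf S\wedge\boldsymbol\sigma(\mathbf S)=0$ by substitution; finally if $m\in\mathcal W$ then $\boldsymbol\kappa(\nabla F(\mathbf m))$ is proportional to $\mathbf m$ (since $m=n_{\infty,m}(\mathcal Z)=\Pi(\boldsymbol\kappa(\nabla F(\mathbf m)))$), so $\boldsymbol\sigma(\mathbf m)=Q(\nabla F)\mathbf S-2\Delta_{\mathbf S}F\cdot\boldsymbol\kappa(\nabla F)$; here $Q(\mathbf m)=0$ combined with $m=\Pi(\boldsymbol\kappa(\nabla F))$ forces $Q(\nabla F)=0$ up to the relevant factor (the first three coordinates of $\nabla F$ are proportional to $(x,y,z)$ and $Q(m)=x^2+y^2+z^2=0$), leaving $\boldsymbol\sigma(\mathbf m)=-2\Delta_{\mathbf S}F\cdot\boldsymbol\kappa(\nabla F)$ proportional to $\mathbf m$, with $\Delta_{\mathbf S}F(m)\ne 0$ ensuring $\boldsymbol\sigma(\mathbf m)\ne 0$ so the point is genuinely in $\mathcal M_{S,\mathcal Z}$ and not already in $\base(\sigma)$.

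For the reverse inclusion, I would take $m\in\mathcal Z\cap\mathcal M_{S,\mathcal Z}$ with $\boldsymbol\sigma(\mathbf m)\ne 0$ (otherwise $m\in\base(\sigma)$ and we are done), so there is $\lambda\in\mathbb C$ with $\boldsymbol\sigma(\mathbf m)=\lambda\mathbf m$, i.e.
\begin{equation}\label{propeq}
Q(\nabla F)\cdot\mathbf m-2\Delta_{\mathbf S}F\cdot\boldsymbol\kappa(\nabla F)=\mu\,\mathbf m
\end{equation}
after absorbing $Q(\nabla F)$, which gives $(Q(\nabla F)-\mu)\mathbf m=2\Delta_{\mathbf S}F\cdot\boldsymbol\kappa(\nabla F)$. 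Now I distinguish the cases exactly as in the proof of Proposition \ref{baseR}: if $\Delta_{\mathbf S}F(m)=0$, then $(Q(\nabla F)-\mu)\mathbf m=0$, so $\mu=Q(\nabla F)$ and $\boldsymbol\sigma(\mathbf m)=Q(\nabla F)\mathbf m$; but also $\boldsymbol\sigma(\mathbf m)=Q(\nabla F)\mathbf m$ directly from \eqref{defisigma} with $\Delta_{\mathbf S}F=0$ — this is automatic and does not pin down $m$, so I must be more careful and extract the right geometric condition, namely that $\Delta_{\mathbf S}F(m)=0$ means $S\in\mathcal T_m\mathcal Z$, which is one of the defining conditions of $\mathcal B_0$; combined with $m\in\mathcal Z$ I would show this case actually forces $m\in\{S\}$ or leads back into one of the listed sets — this is the subtle point. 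If instead $\Delta_{\mathbf S}F(m)\ne 0$, then $\boldsymbol\kappa(\nabla F(\mathbf m))$ is proportional to $\mathbf m$ (nonzero, else $\boldsymbol\sigma(\mathbf m)=\mu\mathbf m$ forces either $\mu=0$ hence $\boldsymbol\sigma=0$, contradiction, or $Q(\nabla F)=\mu$ again), hence $m=\Pi(\boldsymbol\kappa(\nabla F(\mathbf m)))=n_{\infty,m}(\mathcal Z)$; writing $\boldsymbol\kappa(\nabla F(\mathbf m))=\nu\mathbf m$ and plugging into \eqref{propeq}, $Q(\nabla F)=Q(\boldsymbol\kappa(\nabla F))/\text{(scalar)}=\nu^2 Q(\mathbf m)/\nu^2$-type computation shows $Q(\nabla F)$ is proportional to $Q(\mathbf m)$; then either $Q(\mathbf m)=0$, giving $m\in\mathcal W$, or $Q(\mathbf m)\ne 0$ in which case I would derive a contradiction by the same coordinate manipulation as in Proposition \ref{baseR} (the relations $y_0F_x-x_0F_y=\cdots$ become $yF_x-xF_y=\cdots$, forcing $Q(\nabla F)\cdot x = \cdots$ and ultimately $Q(\nabla F)=0$ or $Q(\mathbf m)=0$).

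The main obstacle I anticipate is the case $\Delta_{\mathbf S}F(m)=0$: here proportionality of $\mathbf m$ and $\boldsymbol\sigma(\mathbf m)$ is automatic from the formula, so membership in $\mathcal M_{S,\mathcal Z}$ carries no information beyond $S\in\mathcal T_m\mathcal Z$. The resolution must be that when $\Delta_{\mathbf S}F(m)=0$ the reflected "line" $(m\,\sigma(m))=(m\,S)$ degenerates or $\sigma(m)=m$ only on a lower-dimensional locus; I would need to show that for $m\in\mathcal Z$ with $\Delta_{\mathbf S}F(m)=0$ and $\boldsymbol\sigma(\mathbf m)\ne 0$, the point $\sigma(m)$ equals $m$ in $\mathbb P^3$ exactly when $m=S$ or when $m$ also satisfies the extra conditions putting it in $\mathcal W$ or $\base(\sigma)$ — equivalently, that $Q(\nabla F)\mathbf m$ and $\mathbf m$ being proportional is the statement $Q(\nabla F)\ne 0$ (generic) giving no constraint, so in fact one should read the hypothesis $m\in\mathcal M_{S,\mathcal Z}$ as requiring $\boldsymbol\sigma(\mathbf m)=\lambda_0\mathbf m$ up to \emph{both} $\lambda_0,\lambda_1$ not both zero, and when $\boldsymbol\sigma(\mathbf m)=Q(\nabla F)\mathbf m$ with $Q(\nabla F)\ne 0$ the point $\sigma(m)=m$ literally, so $m$ lies on its reflected "line" trivially — the point is that $\mathcal R_m$ is ill-defined precisely on these $m$, and $\{m\in\mathcal Z:\Delta_{\mathbf S}F(m)=0\}\subset\base(\sigma)\cup\{S\}\cup\mathcal W$ must be checked using $Q(\mathbf m)=0$ somewhere or reduces to $S\in\mathcal T_m\mathcal Z$ being incompatible with $m\ne S$ on $\mathcal Z$ off the base locus. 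I would handle this by writing out $\boldsymbol\sigma(\mathbf m)=Q(\nabla F)\mathbf m$ when $\Delta_{\mathbf S}F(m)=0$ and observing this says $\sigma(m)=m$, then arguing geometrically that $\sigma(m)=m$ with $m\notin\base(\sigma)$ means $S$ is fixed by the reflection $s_{\mathcal T_m\mathcal Z}$, i.e. $S\in\mathcal T_m\mathcal Z$ (true here) \emph{and} additionally $m=S$ unless the reflection is undefined along $(mS)$ — giving exactly the claimed three sets.
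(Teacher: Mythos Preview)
Your proposal contains a genuine error that derails the argument. In your displayed equation \eqref{propeq} and in the subsequent discussion you write $\boldsymbol\sigma(\mathbf m)=Q(\nabla F)\cdot\mathbf m-2\Delta_{\mathbf S}F\cdot\boldsymbol\kappa(\nabla F)$, but the definition (\ref{defisigma}) has $\mathbf S$, not $\mathbf m$, in the first term. This slip is what makes the case $\Delta_{\mathbf S}F(m)=0$ look like an ``obstacle'' to you: with the correct formula, $\Delta_{\mathbf S}F(m)=0$ gives $\boldsymbol\sigma(\mathbf m)=Q(\nabla F(\mathbf m))\cdot\mathbf S$, and since you have assumed $\boldsymbol\sigma(\mathbf m)\ne 0$, proportionality of $\mathbf m$ with $\boldsymbol\sigma(\mathbf m)$ forces $m=S$ immediately. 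There is no subtlety here, and the entire ``main obstacle'' paragraph dissolves.

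Beyond this slip, you are also missing the device that organises the forward inclusion cleanly. With the correct formula, the relation $\boldsymbol\sigma(\mathbf m)=\lambda\mathbf m$ expresses a linear dependence among $\mathbf m$, $\mathbf S$ and $\boldsymbol\kappa(\nabla F(\mathbf m))$, and it is not obvious how to extract the dichotomy $\Delta_{\mathbf S}F=0$ versus $Q(\nabla F)=0$ from that alone. The paper's trick is to apply $DF(\mathbf m)$ to both sides: by Euler's identity $DF(\mathbf m)\cdot\mathbf m=dF(\mathbf m)=0$ on $\mathcal Z$, so $DF(\mathbf m)\cdot\boldsymbol\sigma(\mathbf m)=0$; computing directly,
\[
DF(\mathbf m)\cdot\boldsymbol\sigma(\mathbf m)=Q(\nabla F)\,\Delta_{\mathbf S}F-2\Delta_{\mathbf S}F\,Q(\nabla F)=-\Delta_{\mathbf S}F\cdot Q(\nabla F),
\]
since $DF\cdot\mathbf S=\Delta_{\mathbf S}F$ and $DF\cdot\boldsymbol\kappa(\nabla F)=F_x^2+F_y^2+F_z^2=Q(\nabla F)$. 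Thus $\Delta_{\mathbf S}F\cdot Q(\nabla F)=0$ on $\mathcal Z\cap\mathcal M_{S,\mathcal Z}$, and the two cases follow in one line each: if $\Delta_{\mathbf S}F=0$ then $\boldsymbol\sigma=Q(\nabla F)\mathbf S$ and $m=\sigma(m)=S$; if $Q(\nabla F)=0$ then $\boldsymbol\sigma=-2\Delta_{\mathbf S}F\,\boldsymbol\kappa(\nabla F)$, so $m=n_{\infty,m}(\mathcal Z)$, $\Delta_{\mathbf S}F\ne 0$, and $Q(\mathbf m)=0$ (the last because $\mathbf m$ is proportional to $\boldsymbol\kappa(\nabla F)$ and $Q(\boldsymbol\kappa(\nabla F))=Q(\nabla F)=0$). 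The reverse inclusion really is immediate once you use $\Delta_{\mathbf S}F(\mathbf S)=dF(\mathbf S)=0$ for $S\in\mathcal Z$.
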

\begin{proof}
We prove $\mathcal Z \cap \mathcal M_{S,\mathcal Z}\subseteq\mathcal Z\cap(\base(\sigma)\cup\{S\}
       \cup\mathcal W)$, the inverse inclusion being clear.
Let $m\in (\mathcal Z \cap\mathcal M_{S,\mathcal Z})\setminus \base(\sigma)$. Observe that, due to the Euler identity, we have
$0=DF(\mathbf{m})\cdot \mathbf{m}$ and so
$0=DF \cdot \boldsymbol{\sigma}=-\Delta_{\mathbf{S}}F
      \cdot Q(\nabla F).$
If $\Delta_{\mathbf{S}}F=0$, then $\boldsymbol{\sigma}= Q(\nabla F)\cdot \mathbf{S}$,
so $m=\sigma(m)=S$.
If $ Q(\nabla F)=0$, then  
$\boldsymbol{\sigma}=-2\Delta_{\mathbf{S}}F\cdot\boldsymbol{\kappa}(\nabla F)$. So
$m=\sigma(m)=n_{\infty,m}(\mathcal Z)$; moreover $\Delta_{\mathbf{S}}F\ne 0$ and $Q=0$.
\end{proof}
\begin{lem}\label{dim3}
If $\dim \mathcal M_{S,\mathcal Z}=3$, then $\mathcal Z=\mathcal H^\infty$ or $V(\Delta_{\mathbf S}F,Q(\nabla F))=\mathbb P^3$.
\end{lem}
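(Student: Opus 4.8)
The plan is to unwind the definition of $\mathcal M_{S,\mathcal Z}$ and argue that if this set is the whole of $\mathbb P^3$, then the rational map $\sigma$ must be, generically, the identity map of $\mathbb P^3$ (up to the base locus), and then to read off what this forces on $F$. Concretely, recall from Definition \ref{defiM} that $m\in\mathcal M_{S,\mathcal Z}$ exactly when $\mathbf m$ and $\boldsymbol\sigma(\mathbf m)$ are colinear in $\mathbf W$. So $\dim\mathcal M_{S,\mathcal Z}=3$ means that for every $\mathbf m\in\mathbf W\setminus\{0\}$ (outside the proper closed set $\base(\sigma)$ where $\boldsymbol\sigma$ vanishes) the vectors $\mathbf m$ and $\boldsymbol\sigma(\mathbf m)$ are proportional; equivalently, all $2\times 2$ minors of the matrix with rows $\mathbf m$ and $\boldsymbol\sigma(\mathbf m)$ vanish identically as elements of $Sym^\bullet(\mathbf W^\vee)$. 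Substituting the explicit expression \refeq{defisigma}, $\boldsymbol\sigma=Q(\nabla F)\cdot\mathbf S-2\Delta_{\mathbf S}F\cdot\boldsymbol\kappa(\nabla F)$, these minors become polynomial identities.

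First I would fix coordinates and look at the minor $m_i\sigma^{(j)}-m_j\sigma^{(i)}$. Writing $G:=Q(\nabla F)$ and $L:=\Delta_{\mathbf S}F$, and using $\boldsymbol\kappa(\nabla F)=(F_x,F_y,F_z,0)$, the $(i,j)$ minor with $i,j\in\{1,2,3\}$ reads
\begin{equation*}
G\,(x_i S_j-x_j S_i)-2L\,(x_i F_{x_j}-x_j F_{x_i})=0,
\end{equation*}
and the minors involving the fourth coordinate read $G\,(x_i t_0 - t\, S_i) + 2L\, t\, F_{x_i}=0$ for $i\in\{1,2,3\}$ (here $x_1,x_2,x_3,x_4=x,y,z,t$ and $F_{x_4}=F_t$). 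The strategy is: either $G=Q(\nabla F)\equiv 0$ identically on $\mathbb P^3$, or $G\not\equiv 0$. In the first case $Q(\nabla F)=0$ in $\CC[x,y,z,t]$; I would then also need to squeeze out the complementary condition from the minors ($L\,(x_iF_{x_j}-x_jF_{x_i})\equiv 0$ for all $i,j$), which forces either $L=\Delta_{\mathbf S}F\equiv 0$ — giving $V(\Delta_{\mathbf S}F,Q(\nabla F))=\mathbb P^3$ — or that all the $x_iF_{x_j}-x_jF_{x_i}$ vanish identically; the latter says $\nabla F$ is everywhere proportional to $(x,y,z,t)$, i.e. by Euler $F$ divides $t\cdot(\text{stuff})$ in a way that combined with $Q(\nabla F)\equiv 0$ and irreducibility forces $F$ to be (a power of) $t$, i.e. $\mathcal Z=\mathcal H^\infty$. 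In the second case $G\not\equiv 0$: then on the dense open set $\{G\ne 0\}$ we can divide and conclude $\sigma=\mathrm{Id}$ as a rational self-map of $\mathbb P^3$; but $\sigma(m)=s_{\mathcal T_m\mathcal Z}(S)$ is the reflection of the \emph{fixed} point $S$ in a \emph{moving} plane, so $\sigma$ constant-ish/identity is extremely restrictive — I would extract that $s_{\mathcal T_m\mathcal Z}(S)=m$ for generic $m\in\mathbb P^3$, which is impossible unless the tangent-plane data degenerates, again landing in $V(\Delta_{\mathbf S}F,Q(\nabla F))=\mathbb P^3$ or $\mathcal Z=\mathcal H^\infty$.

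The cleanest route for the $G\not\equiv 0$ branch is probably the following: from the minors $G(x_iS_j-x_jS_i)=2L(x_iF_{x_j}-x_jF_{x_i})$ one sees that the polynomial $L=\Delta_{\mathbf S}F$ (of degree $d-1$) divides $G\cdot(x_iS_j-x_jS_i)$; comparing degrees and using that $G$ has degree $2(d-1)$ while $x_iS_j-x_jS_i$ is linear, either $L$ divides $G$, or $L$ is (up to scalar) a product of linear forms among the $x_iS_j-x_jS_i$'s, which is too rigid to hold for all index pairs simultaneously unless $L\equiv 0$ or $G\equiv 0$. Pushing this bookkeeping through, the only surviving possibilities are $Q(\nabla F)\equiv 0$ and $\Delta_{\mathbf S}F\equiv 0$ together — i.e. $V(\Delta_{\mathbf S}F,Q(\nabla F))=\mathbb P^3$ — or a degenerate $F$, namely $F$ a power of $t$ so $\mathcal Z=\mathcal H^\infty$. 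I expect the main obstacle to be organizing the divisibility/degree argument across all the minor relations cleanly — in particular ruling out the "mixed" scenario where $Q(\nabla F)\not\equiv0$ but the minors still vanish because $\nabla F$ happens to be proportional to $(x,y,z,t)$ on all of $\mathbb P^3$; handling that case requires invoking the Euler relation $DF\cdot\mathbf m=d\,F$ to show $F$ itself becomes $\lambda t^d$, which is where one must be careful not to hand-wave.
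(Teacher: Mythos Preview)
Your strategy of expanding the $2\times2$ minors of $(\mathbf m,\boldsymbol\sigma(\mathbf m))$ is sound, but it is not the route the paper takes, and your execution leaves a real gap. The paper argues in two lines: it claims that $\dim\mathcal M_{S,\mathcal Z}=3$ forces $\base(\sigma)=\mathbb P^3$, then invokes the decomposition $\base(\sigma)=V(F_x,F_y,F_z)\cup V(\Delta_{\mathbf S}F,Q(\nabla F))$ obtained in the proof of Proposition~\ref{baseR}; since $\mathbb P^3$ is irreducible, one of the two pieces must be all of $\mathbb P^3$. Your minor identities are essentially what one would compute to justify the first step, so the two approaches are closer than they look --- but the paper packages everything through $\base(\sigma)$ rather than a case split on $G=Q(\nabla F)$.

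In your $G\equiv 0$ branch you overcomplicate and misstate things. The relations $L(x_iF_{x_j}-x_jF_{x_i})\equiv 0$ for $i,j\le 3$ say $(F_x,F_y,F_z)\parallel(x,y,z)$, not $\nabla F\parallel(x,y,z,t)$, and your appeal to ``Euler'' and ``irreducibility'' to reach $F=ct^d$ is murky. Use instead the $(i,4)$ minors you already wrote: with $G\equiv 0$ they read $2tLF_{x_i}\equiv 0$, whence directly $L\equiv 0$ or $F_x=F_y=F_z\equiv 0$, i.e.\ $\mathcal Z=\mathcal H^\infty$.

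The genuine gap is your $G\not\equiv 0$ branch: the divisibility sketch (``$L$ divides $G(x_iS_j-x_jS_i)$'') does not close. Here is a clean finish. Multiply the $(i,4)$ relation $G(x_it_0-tS_i)+2tLF_{x_i}=0$ by $F_{x_i}$ and sum over $i=1,2,3$; using $\sum F_{x_i}^2=G$, Euler $\sum x_iF_{x_i}=dF-tF_t$, and $\sum S_iF_{x_i}=L-t_0F_t$, one obtains
\[
G\bigl(t_0\,dF+tL\bigr)\equiv 0,
\]
hence $t_0\,dF+tL\equiv 0$. If $t_0=0$ this gives $L\equiv 0$, so $\boldsymbol\sigma=G\mathbf S$ and the $(i,j)$ minors become $G(x_iS_j-x_jS_i)\equiv 0$, impossible for a fixed $\mathbf S\ne 0$ when $G\not\equiv 0$. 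If $t_0\ne 0$ then $t\mid F$; writing $F=t^kF_0$ with $t\nmid F_0$ and substituting back yields $(d+k)t_0F_0+t\,\Delta_{\mathbf S}F_0\equiv 0$, which at $t=0$ forces $t\mid F_0$ unless $F_0$ is constant, i.e.\ $F=ct^d$ --- but then $G=Q(\nabla F)=0$, contradiction. So $G\not\equiv 0$ cannot occur, and you land back in the first branch.
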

\begin{proof}
Due to Proposition \ref{B1}, we have $\mathcal Z \cap \mathcal M_{S,\mathcal Z}\subseteq\mathcal Z\cap(\base(\sigma)\cup\{S\}
       \cup\mathcal C_\infty)$.
Assume that $\dim \mathcal M_{S,\mathcal Z}=3$. This implies that $\base(\sigma)=\mathbb P^3$. So, due to the proof of Proposition
\ref{baseR}, we conclude that $\mathbb P^3=V(F_x,F_y,F_z)\cup V(\Delta_{\mathbf S}F,Q(\nabla F))$. So, either
$\mathbb P^3=V(F_x,F_y,F_z)$ (which implies $\mathcal Z=\mathcal H^\infty$) or $\mathbb P^3=V(\Delta_{\mathbf S}F,Q(\nabla F))$.
\end{proof}
\section{Caustic by reflection}\label{ramification}
Now, let us introduce some additional notations.
We define $N_{\mathbf{S}}(\mathbf{m})$ as the complexified homogenized square euclidean norm of ${\mathbf{S}}\mathbf{m}$ by
$$N_{\mathbf{S}}(\mathbf{m}):=(xt_0-x_0t)^2+(yt_0-y_0t)^2
      +(zt_0-z_0t)^2.$$
We will also consider the bilinear Hessian form $\hess_F$
of $F$ and its determinant $H_F$.
Let us see how to construct two maps $\psi=\psi^{\pm}:\mathcal Z\rightarrow\mathbb P^3$
such that the surface $\psi(\mathcal Z)$ is tangent to the reflected line $\mathcal R_m$ at $\psi(m)$, for a generic $m\in\mathcal Z$.
Observe first that $\psi(m)$ is in $\mathcal R_m$ implies that $\psi(m)$ can be rewritten
$$\boldsymbol{\psi}
   (\mathbf{m})=\lambda_0(\mathbf{m})\cdot \mathbf{m}+\lambda_1(\mathbf{m})\cdot \boldsymbol{\sigma}(\mathbf{m})
     \in \mathbf{W}\setminus\{0\} ,$$
with $[\lambda_0(\mathbf{m}):\lambda_1(\mathbf{m})]\in\mathbb P^1$ for every $m\in\mathcal Z$.
The main result of this section is the next theorem specifying the form of $\lambda_0$ and $\lambda_1$ (belonging to an integral extension of the ring $Sym(\mathbf{W}^\vee)$) 
which ensures that, for a generic $m\in\mathcal Z$, $\mathcal R_m$
is tangent to $\psi(\mathcal{Z})$ at $\psi(m)$.
\begin{thm}\label{THM}\label{thmsurface}
Let $\psi:U\rightarrow {\mathbb P}^3$ (with $U\subseteq\mathcal Z$) be given by
$$\boldsymbol{\psi}(\mathbf{m})=\lambda_0(\mathbf{m})\cdot \mathbf{m}+\lambda_1(\mathbf{m})\cdot 
\boldsymbol{\sigma}(\mathbf{m})\ \ \in\mathbf W ,$$
with $\lambda_0(\cdot)$ and $\lambda_1(\cdot)$ in
an integral extension of ${Sym(\mathbf{W}^\vee)}$ such that
\begin{equation}\label{formequadratique}
\alpha(\mathbf{m})(\lambda_0(\mathbf{m}))^2 +\beta(\mathbf{m})\lambda_0(\mathbf{m})
\lambda_1(\mathbf{m})+\gamma(\mathbf{m})(\lambda_1(\mathbf{m}))^2=0
\end{equation}
with $\alpha,\beta,\gamma\in Sym(\mathbf{W}^\vee)$ given by
\begin{equation}\label{alpha0}
\alpha:=\Delta_{\mathbf{S}}F\ \ \in Sym^{d-1}(\mathbf{W}^\vee),\end{equation}
\begin{equation}\label{beta0}
\beta:=-2 \left[
  \hess F(\mathbf{S}, \boldsymbol{\sigma})
+(\Delta_{\mathbf{S}}F)^2(F_{xx}+F_{yy}+F_{zz})\right]\ \ \in 
Sym^{3d-4}(\mathbf{W}^\vee)
\end{equation}
and
\begin{equation}\label{gamma0}\gamma:=- \frac{4\Delta_{\mathbf{S}}F}{(d-1)^2}N_{\mathbf{S}}\, H_F \ \ \in 
Sym^{5d-7}(\mathbf{W}^\vee).\end{equation}
Then, for every 
$m\in \mathcal Z\setminus V(t Q(\nabla F))$,  
the reflected line $\mathcal R_m$ is tangent to $\psi(\mathcal Z)$ at $\psi(m)$.
\end{thm}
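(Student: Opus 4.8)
The plan is to reduce the claim to an \emph{envelope} (ramification) computation: for a generic $m\in\mathcal Z$, the reflected line $\mathcal R_m=(m\,\sigma(m))$ must be tangent to $\psi(\mathcal Z)$ at $\psi(m)$, which is equivalent to saying that the infinitesimal variation of $\boldsymbol\psi(\mathbf m)$ along $\mathcal Z$ stays in the plane spanned by $\mathbf m$ and $\boldsymbol\sigma(\mathbf m)$ (modulo $\boldsymbol\psi$ itself). Concretely, I would pick local coordinates on $\mathcal Z$ near a generic $m$ (e.g. using two of the affine coordinates as parameters, away from $V(tQ(\nabla F))$ so that $\sigma$ is defined and $m\notin\mathcal H^\infty$, and using $F=0$ to eliminate the third), and compute the derivative $D\boldsymbol\psi$ in the direction of a tangent vector $\mathbf v\in T_m\mathcal Z$. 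Writing $\boldsymbol\psi=\lambda_0\mathbf m+\lambda_1\boldsymbol\sigma$, one gets
$$D_{\mathbf v}\boldsymbol\psi=(D_{\mathbf v}\lambda_0)\mathbf m+\lambda_0\mathbf v+(D_{\mathbf v}\lambda_1)\boldsymbol\sigma+\lambda_1 D_{\mathbf v}\boldsymbol\sigma,$$
and tangency to $\psi(\mathcal Z)$ along $\mathcal R_m$ amounts to: for the two independent tangent directions $\mathbf v$, the vectors $D_{\mathbf v}\boldsymbol\psi$ all lie in $\mathrm{Span}(\mathbf m,\boldsymbol\sigma(\mathbf m))$. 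Since $\lambda_0\mathbf v$ is the only term that can leave the line $(S m)$-plane picture, the real content is controlling $D_{\mathbf v}\boldsymbol\sigma$ modulo $\mathrm{Span}(\mathbf m,\boldsymbol\sigma)$.

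The key algebraic identity to establish is that, modulo $\mathbf m$ and $\boldsymbol\sigma$, the differential $D\boldsymbol\sigma$ acting on $T_m\mathcal Z$ is a \emph{scalar} multiple of a suitable map, so that the obstruction to tangency collapses to a single quadratic relation between $\lambda_0$ and $\lambda_1$. This is where the quadratic form $\alpha\lambda_0^2+\beta\lambda_0\lambda_1+\gamma\lambda_1^2=0$ enters: I expect that after expanding $\hess_F(\mathbf S,\boldsymbol\sigma)$, using the Euler relations $DF(\mathbf m)\cdot\mathbf m=dF$, $D(\nabla F)(\mathbf m)\cdot\mathbf m=(d-1)\nabla F$, and the definition $\boldsymbol\sigma=Q(\nabla F)\mathbf S-2\Delta_{\mathbf S}F\,\boldsymbol\kappa(\nabla F)$, the determinant condition expressing ``$D_{\mathbf v}\boldsymbol\psi$ coplanar with $\mathbf m,\boldsymbol\sigma$ for all $\mathbf v\in T_m\mathcal Z$'' factors through exactly $\alpha,\beta,\gamma$ as given. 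The coefficient $\gamma$ carries the Hessian determinant $H_F$ (detecting where $\mathcal Z$ is flat, i.e. asymptotic directions), $N_{\mathbf S}$ (the squared distance to $S$, detecting $m=S$), and the extra $\Delta_{\mathbf S}F$ factor (the polar, detecting $S\in\mathcal T_m\mathcal Z$); $\beta$ mixes the Hessian bilinear form with the Laplacian-type term $F_{xx}+F_{yy}+F_{zz}$ coming from differentiating $Q(\nabla F)$ and $\boldsymbol\kappa(\nabla F)$; $\alpha=\Delta_{\mathbf S}F$ is the polar again. So the strategy is: write the $3\times 3$ (or $2\times 3$ after quotienting) determinant whose vanishing is tangency, expand it using Euler's identities and the explicit form of $\boldsymbol\sigma$, and check it equals (a nonzero scalar times) the left-hand side of \eqref{formequadratique}.

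In more detail, the steps I would carry out are: (1) fix a generic $m\in\mathcal Z\setminus V(tQ(\nabla F))$, choose a basis $\mathbf v_1,\mathbf v_2$ of $T_m\mathcal Z$ (equivalently a basis of $\ker DF(\mathbf m)$ complementary to $\mathbf m$), and note $\mathbf m,\boldsymbol\sigma(\mathbf m)$ are independent for generic $m$ by Proposition \ref{B1}; (2) compute $D_{\mathbf v_i}\boldsymbol\sigma$ explicitly: $D_{\mathbf v}\boldsymbol\sigma=2\,\hess_F(\nabla F,\mathbf v)\,\mathbf S-2\big(DF(\mathbf m)\cdot\mathbf v\big)\langle\text{lower order}\rangle-2\Delta_{\mathbf S}F\,\boldsymbol\kappa(\hess_F\cdot\mathbf v)-2\,\hess_F(\mathbf S,\mathbf v)\,\boldsymbol\kappa(\nabla F)$, simplifying the middle term since $DF(\mathbf m)\cdot\mathbf v=0$; (3) impose that $D_{\mathbf v_i}\boldsymbol\psi\in\mathrm{Span}(\mathbf m,\boldsymbol\sigma)$ for $i=1,2$ — this is two determinant conditions in $\wedge^3\mathbf W$, linear in the unknown increments $D\lambda_0,D\lambda_1$ but with coefficients quadratic in $(\lambda_0,\lambda_1)$ — and observe that solvability forces the discriminant-type relation \eqref{formequadratique}; (4) carry out the (long but mechanical) expansion, repeatedly using Euler's relations and $\boldsymbol\kappa^2=\boldsymbol\kappa$, $Q=\hat Q\circ\boldsymbol\kappa$, and the identity $Q(\nabla F)=\va{\boldsymbol\kappa(\nabla F)}^2$, to identify the resulting polynomial with $\alpha\lambda_0^2+\beta\lambda_0\lambda_1+\gamma\lambda_1^2$; (5) finally note that since $\lambda_0,\lambda_1$ satisfy \eqref{formequadratique} by hypothesis, the tangency holds, and the degree bookkeeping $\alpha\in Sym^{d-1}$, $\beta\in Sym^{3d-4}$, $\gamma\in Sym^{5d-7}$ follows from counting: $Q(\nabla F)\in Sym^{2(d-1)}$, $\Delta_{\mathbf S}F\in Sym^{d-1}$, $\boldsymbol\sigma\in Sym^{2(d-1)}$, $H_F\in Sym^{3(d-2)}$, $N_{\mathbf S}\in Sym^2$, $\hess F(\mathbf S,\boldsymbol\sigma)\in Sym^{(d-2)+2(d-1)}=Sym^{3d-4}$. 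The main obstacle will be step (4): the raw expansion of the $3\times3$ determinant produces many terms, and making them collapse onto the compact closed form of $\beta$ and $\gamma$ requires careful and systematic use of the Hessian–Euler identity $\hess_F(\mathbf m,-)=(d-1)DF(-)$ together with $\hess_F(\mathbf m,\mathbf v)=0$ on tangent vectors — I expect that is exactly the point where the Laplacian term $F_{xx}+F_{yy}+F_{zz}$ and the factor $1/(d-1)^2$ are produced, and it is worth organizing the computation so that the $\mathbf S$-components and the $\boldsymbol\kappa(\nabla F)$-components of $D_{\mathbf v}\boldsymbol\psi$ are tracked separately.
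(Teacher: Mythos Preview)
Your overall strategy---reduce tangency of $\mathcal R_m$ to $\psi(\mathcal Z)$ to a ramification condition, then expand using Euler identities---is exactly the paper's. But your formulation of the tangency condition in step~(3) is wrong and would not lead to a single quadratic relation. Requiring $D_{\mathbf v_i}\boldsymbol\psi\in\mathrm{Span}(\mathbf m,\boldsymbol\sigma)$ for \emph{both} $i=1,2$ is too strong: it forces the image $\psi(\mathcal Z)$ to be (locally) a curve inside the line $\mathcal R_m$. The correct condition is the single scalar equation
\[
\mathbf m\wedge\boldsymbol\sigma(\mathbf m)\wedge D_{\mathbf v_1}\boldsymbol\psi\wedge D_{\mathbf v_2}\boldsymbol\psi=0\quad\text{in }\textstyle\bigwedge^4\mathbf W,
\]
i.e.\ these four vectors span at most a hyperplane. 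In this $4$-fold wedge the contributions $(D_{\mathbf v}\lambda_0)\mathbf m$ and $(D_{\mathbf v}\lambda_1)\boldsymbol\sigma$ vanish automatically, so $D\lambda_0,D\lambda_1$ never appear; your ``solvability in the unknown increments'' discussion is a red herring. What remains is precisely
\[
\mathbf m\wedge\boldsymbol\sigma\wedge(\lambda_0\mathbf v_1+\lambda_1 D_{\mathbf v_1}\boldsymbol\sigma)\wedge(\lambda_0\mathbf v_2+\lambda_1 D_{\mathbf v_2}\boldsymbol\sigma)=0,
\]
manifestly homogeneous quadratic in $(\lambda_0,\lambda_1)$. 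The paper organises this as the vanishing of a $5\times 5$ minor of the Jacobian of $(\mathbf m,\lambda_0,\lambda_1)\mapsto(\lambda_0\mathbf m+\lambda_1\boldsymbol\sigma(\mathbf m),\,F(\mathbf m))$, which is an equivalent packaging.

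You are also missing a computational device that matters for step~(4). The paper does \emph{not} differentiate $\boldsymbol\sigma$ directly: it replaces $\boldsymbol\sigma$ by
\[
\boldsymbol\tau:=\boldsymbol\sigma+\tfrac{2t_0\,d\,F}{t}\,\boldsymbol\kappa(\nabla F),
\]
which coincides with $\boldsymbol\sigma$ on $\mathcal Z$ (since $F=0$ there) but whose partial derivatives differ by multiples of $\nabla F$, so the swap $\boldsymbol\sigma\to\boldsymbol\tau$ inside the determinant is a sequence of row operations against the $(F_x,F_y,F_z,F_t)$ row. This is what makes the expansion collapse cleanly: the determinant then equals $t\,Q(\nabla F)\,(\alpha\lambda_0^2+\beta\lambda_0\lambda_1+\gamma\lambda_1^2)$ on $\mathcal Z$, and the extraneous factor $t\,Q(\nabla F)$ is precisely why the hypothesis $m\notin V(t\,Q(\nabla F))$ is needed. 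Without the $\boldsymbol\tau$ substitution your raw expansion will carry extra $F_t$- and $\partial_t$-terms that only cancel after further Euler manipulations, and in particular the identity $h_F=\tfrac{t^2}{(d-1)^2}H_F$ (relating the ``spatial'' $4\times4$ bordered Hessian to the full Hessian determinant on $\mathcal Z$), which produces the $1/(d-1)^2$ in $\gamma$, is most naturally reached through this route.
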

It will be useful to introduce
$$\forall (\mathbf{m},\lambda_0,\lambda_1)\in\mathbf{W}\times\mathbb C^2,\ \ Q_{{\mathbf{S}},F}(\mathbf{m},\lambda_0,\lambda_1)=\alpha(\mathbf{m})\lambda_0^2+\beta(\mathbf{m})\lambda_0\lambda_1+\gamma(\mathbf{m})
      \lambda_1^2.$$
One may notice that, for a  fixed $\mathbf{m}$, $Q_{{\mathbf{S}},F}(\mathbf{m},\lambda_0,\lambda_1)$ is a quadratic form in 
$(\lambda_0,\lambda_1)$.
Roughly speaking, Theorem \ref{THM}
states that the image of $\mathcal Z$ by $\psi(\cdot)=\lambda_0(\cdot)\cdot \Idd+\lambda_1(\cdot)\cdot\sigma(\cdot)$
(for some $\lambda_0,\lambda_1\in Sym(\mathbf{W}^\vee)[\sqrt{\beta^2-4\alpha\gamma}]$)
corresponds to a part of the envelope of the reflected lines $\mathcal R_m$. More precisely:
\begin{defi}\label{deficaustique}
The {\bf caustic by reflection} $\Sigma_{ S}(\mathcal Z)$ 
of $\mathcal Z$ from $S$ is the Zariski closure
 of the following set
$$\{P\in{\mathbb P^3}\ :\ \exists m\in\Z,\ \exists [\lambda_0:\lambda_1]\in\mathbb P^1,\ \ 
   Q_{{\mathbf{S}},F}(\mathbf{m},\lambda_0,\lambda_1)=0\ \mbox{and}\ 
  {\mathbf P}=\lambda_0 \cdot \mathbf{m}+\lambda_1\cdot\boldsymbol{\sigma}(\mathbf{m})\}.$$
\end{defi}
\begin{rqe}
If $\mathcal Z\subseteq V(\Delta_{\mathbf{S}}F,
 (F_x^2+F_y^2+F_z^2)\hess F(\mathbf{S},\mathbf{S}))$, then
(\ref{formequadratique}) becomes $0=0$ on $\mathcal Z$ and $\sigma_{S,\mathcal Z}(\mathcal Z)$
is either $\{S\}$ or empty. If it is $S$ (i.e. if 
$\Delta_{\mathbf S}F=0$ in $\mathbb C[x,y,z,t]$
and if $\mathcal Z\not\subseteq
V(F_x^2+F_y^2+F_z^2)$), we set $\Sigma_S(\mathcal Z)=\{S\}$.
\end{rqe}
Theorem \ref{THM} states that the points of the caustic
$\Sigma_{S}(\mathcal Z)$ corresponding to $m\in\mathcal Z$
are the points of coordinates $\boldsymbol{\psi^\pm}(\mathbf m)$
with
\begin{equation}\label{solutions}
\boldsymbol{\psi^\pm}(\mathbf m)= \left(\tilde\beta(\mathbf m)\pm\sqrt{\vartheta
(\mathbf m)}\right)
\cdot \mathbf{m}+
\Delta_{\mathbf S}F(\mathbf m)\cdot
  \boldsymbol{\sigma}(\mathbf m)\in\mathbb C^4,
\end{equation}
with $\tilde\beta:=-\beta/2$
and $\vartheta:=\tilde\beta-
\alpha\gamma$.
Let us observe that if $\vartheta$ is a square in
$\mathbb C[x,y,z,t]/(F)$, then, on $\mathcal Z$, (\ref{solutions}) corresponds to two rational 
maps $\psi^\pm:\mathbb P^3\rightarrow\mathbb P^3$ 
and the caustic by reflection $\Sigma_S(\mathcal Z)$ is
the union of the Zariski closures of $\psi^+(\mathcal Z)$ and of
$\psi^-(\mathcal Z)$.
Let us give some examples.
\begin{exa}[A singular caustic of the saddle surface]
Let us study the caustic by reflection of $\mathcal Z=V(xy-zt)$ from
$S=[0:0:1:0]$. Observe that $\alpha=-t$, 
$\beta=0$ and $\gamma=4t^3$.
So (\ref{formequadratique}) becomes $\lambda_0^2-4t^2\lambda_1^2=0$ 
(if $t\ne 0$). Hence $\Sigma_{S}(\mathcal Z)$ is the union of the Zariski closure
of the images of $\mathcal Z$ by the two rational maps 
$\psi^\pm:\mathbb P^3\mapsto\mathbb P^3$ defined on coordinates by
$\boldsymbol{\psi^\pm}(x,y,z,t)=(2ty\pm 2tx,2tx\pm 2ty,
x^2+y^2-t^2\pm 2tz,\pm 2t^2)$.
Noting that on $\mathcal Z$, $tz=xy$, we obtain that the Zariski
closure of $\psi^\pm(\mathcal Z)$ is the parabola $V(X\mp Y,\pm TZ-(X^2-T^2)/2)$
and so that the caustic $\Sigma_S(\mathcal Z)$ is the union of these two curves
(which are parabolas contained in two orthogonal planes).
\end{exa}
\begin{exa}[The double-butterfly caustic of the saddle surface]\label{butterfly}
We are interested in the caustic by reflection of $\mathcal Z=V(xy-zt)$ from
$S=[0:0:1:1]$. We have $\alpha=-z-t$, 
$\tilde\beta=-2(x^2+y^2-zt)$ and 
$\gamma=4(z+t)(x^2+y^2+z^2+t^2-2tz)$
and so
$\vartheta=4(x^4+y^4+z^4+t^4-z^2t^2+2x^2y^2+x^2z^2+y^2z^2+x^2t^2+y^2t^2)$.
Since $\vartheta(x,y,xy,1)$ is not a square in
$\mathbb C[x,y]$, we conclude that $\vartheta$ is not
a square in $\mathbb C[x,y,z,t]/(F)$. In this case, the coordinates of
$\boldsymbol{\psi^\pm}$ are in an extension of $\mathbb C[x,y,z,t]$
and do not corresponds to rational maps on $\mathbb P^3$
(see Figure \ref{fig:papillon} for a representation of this caustic).
\end{exa}

\begin{figure}[!ht]
\centering
\makebox{
\includegraphics[scale=.8]{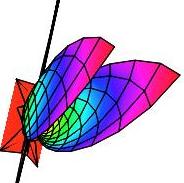} \includegraphics[scale=.6]{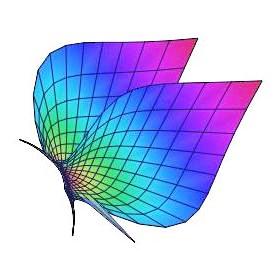}
\includegraphics[scale=0.7]{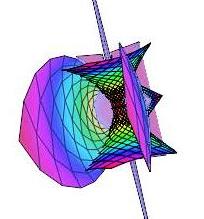}}
\caption{\label{fig:papillon} {The picture on the right represents the 
caustic by reflection of $V(zt-xy)$ from $[0:0:1:0]$ (corresponding
to the points of $V(zt-xy)$ in the chart $t=1$ for $x,y\in[-4,4]$).
This caustic is obtained by gathering the two sheets 
given by (\ref{solutions}) and represented in the two first pictures.}}
\end{figure}

\begin{exa}\label{exaparaboloid}
Let $\mathcal Z$ be the paraboloid $V((x^2+y^2-2zt)/2)\subset\mathbb P^3$.

The caustic by reflection $\mathcal Z$
from its focal point $F_1=[0:0:1:0]$ 
is its other focal point $F_2=[0:0:1:2]$.
This can be quickly shown with our Theorem \ref{thmsurface}
(since ${\alpha}=-t$, ${\beta}=-4t^2$ and
${\gamma}=4t^3$ and so (\ref{formequadratique}) admits a unique
solution $[\lambda_0:\lambda_1]\in\mathbb P^1$ which is $[-2t:1]$.
The unique ramification point $M_m$ associated to $m$ is then
$M_m=[0:0:-2zt+x^2+y^2-t^2:-2t^2]=F_2$ (since $x^2+y^2=2zt$).

If light position $S$ is another point of $\mathcal H^\infty$, then
$$\vartheta=(x_0^2+y_0^2)[(x^2+y^2+t^2)^2(x_0^2+y_0^2)-4(x^2+y^2+t^2)(xx_0+yy_0-
z_0t)z_0t-4(xx_0+yy_0-z_0t)^2t^2]$$
is not a square in $\mathbb C[x,y,z,t]/(F)$
unless if $x_0^2+y_0^2+z_0^2=0$ or $x_0^2+y_0^2=0$ (see Proposition \ref{cas1} for the case when
$x_0^2+y_0^2=0$ and $t_0=0$).\footnote{To prove that $\delta$
is not a square in $\mathbb C[x,y,z,t]/(F)$, it is enough to see 
that there exists no polynomial $P\in\mathbb C[x,y]$ such that $(P(x,y))^2=\delta(x,y,(x^2+y^2)/2,1)$.} The fact that $\vartheta$ is not a square
means that the caustic map $\Phi_{S,F}$ cannot be decomposed
in two rational maps on $\mathbb P^3$.
\end{exa}

The end of this section is devoted to the proof of Theorem 
\ref{THM}.
\begin{proof}[Proof of Theorem \ref{THM}]
Let $m[x:y:z:t]\in\mathcal Z\setminus V(t(Q(\nabla F)))$. 
We will use several times the Euler identity 
($xG_x+yG_y+zG_y+tG_t=d_1G$ if $G$ is in $Sym^{d_1}(W^\vee)$). 
We use the idea of ramification (used for example in \cite{Trifogli,CataneseTrifogli}). 
The points of the caustic corresponding to $m$ are the points $\Pi(\lambda_0 \cdot\mathbf{m}+\lambda_1\cdot \boldsymbol{\sigma}(\mathbf{m}))$
with $[\lambda_0:\lambda_1]\in\mathbb P^1$ such that the rank of the 
Jacobian matrix $J$ of 
$$j:(\mathbf{m},\lambda_0,\lambda_1)\mapsto (\lambda_0 \cdot\mathbf{m}+\lambda_1 \cdot\boldsymbol{\sigma}(\mathbf{m}),F(\mathbf{m}))$$
is less than 5. We have
$$J:=\left(\begin{array}{cccccc}\lambda_0+\lambda_1 \sigma^{(1)}_x&\lambda_1 \sigma^{(1)}_y&\lambda_1 \sigma^{(1)}_z
     &\lambda_1 \sigma^{(1)}_t&x&\sigma^{(1)}\\
    \lambda_1 \sigma^{(2)}_x&\lambda_0+\lambda_1 \sigma^{(2)}_y&\lambda_1 \sigma^{(2)}_z&\lambda_1 \sigma^{(2)}_t&y&\sigma^{(2)}\\
    \lambda_1 \sigma^{(3)}_x&\lambda_1 \sigma^{(3)}_y&\lambda_0+\lambda_1 \sigma^{(3)}_z&\lambda_1 \sigma^{(3)}_t&z&\sigma^{(3)}\\
    \lambda_1 \sigma^{(4)}_x&\lambda_1 \sigma^{(4)}_y&\lambda_1 \sigma^{(4)}_z&\lambda_0+\lambda_1 \sigma^{(4)}_t&t&\sigma^{(4)}\\
    F_x&F_y&F_z&F_t&0&0\end{array}\right),$$
with $\sigma^{(i)}$ the $i$th coordinates of $\boldsymbol{\sigma}$.
\begin{enumerate}
\item Let us explain this
briefly. Let $\psi(\cdot)$ of the form
$\boldsymbol{\psi}(\mathbf{m}')=\lambda_0(\mathbf{m}')\cdot\mathbf{m}'+\lambda_1(\mathbf{m}')\cdot\boldsymbol{\sigma}(\mathbf{m}').$
We define the following property
\begin{equation}\label{(*)}
\mbox{the line }(m\sigma(m))\mbox{ is tangent to }\psi(\mathcal Z)\mbox{ at }\psi(m).
\end{equation}
Recall that we have assumed $(Q(\nabla F))(\mathbf{m})\ne 0$.
Assume for example $F_x(\mathbf{m})\ne 0$ (the proof is similar if we replace
$F_x$ by $F_y$ or by $F_z$).
Now, Property (\ref{(*)})
means that there exists $A\in\mathbf{W}^\vee\setminus\{0\}$ such that
$$ A(\mathbf{m})=0,\   A(\boldsymbol{\sigma}(\mathbf{m}))=0,\ \ 
     A((D\boldsymbol{\psi}(\mathbf{m} )\cdot\left(\begin{array}{c}F_y(\mathbf{m} )\\-F_x(\mathbf{m} )\\0\\0\end{array}\right))=0,$$
$$    A((D\boldsymbol{\psi}(\mathbf{m} ) )\cdot\left(\begin{array}{c}F_z(\mathbf{m} )\\0\\-F_x(\mathbf{m} )\\0\end{array}\right))=0\ \ \mbox{and}\ \ 
     A((D\boldsymbol{\psi}(\mathbf{m} ) )\cdot\left(\begin{array}{c}F_t(\mathbf{m} )\\0\\0\\-F_x(\mathbf{m} )\end{array}\right))=0,$$
and so that
$$ A(\mathbf{m})=0,\   A(\boldsymbol{\sigma}(\mathbf{m}))=0,\ \ 
     A(\boldsymbol{\psi}_x(\mathbf{m})) F_y(\mathbf{m})=F_x(\mathbf{m}) A(\boldsymbol{\psi}_y(\mathbf{m})),$$
$$     A(\boldsymbol{\psi}_x(\mathbf{m})) F_z(\mathbf{m})=F_x(\mathbf{m})A(\boldsymbol{\psi}_z(\mathbf{m}))\ \ \mbox{and}\ \ 
    A(\boldsymbol{\psi}_x(\mathbf{m})) F_t(\mathbf{m})=F_x(\mathbf{m})A(\boldsymbol{\psi}_t(\mathbf{m})).$$
Therefore, by taking $b:= A(\boldsymbol{\psi}_x(\mathbf{m}))/F_x(\mathbf{m})$, 
$$ A(\mathbf{m})=0,\ \  A(\boldsymbol{\psi}_x(\mathbf{m}))=F_x(\mathbf{m})b,\ \ A(\boldsymbol{\psi}_y(\mathbf{m}))=F_y(\mathbf{m})b,$$
$$ A(\boldsymbol{\psi}_z(\mathbf{m}))=F_z(\mathbf{m})b\ \mbox{ and }
 A(\boldsymbol{\psi}_t(\mathbf{m}))=F_t(\mathbf{m})b  $$
and so that the rank of the following matrix is strictly less than 5
$$\hat J:=\left(\begin{array}{cccccc}\boldsymbol{\psi}_x(\mathbf{m})&\boldsymbol{\psi}_y(\mathbf{m})&\boldsymbol{\psi}_z(\mathbf{m})&\boldsymbol{\psi}_t(\mathbf{m})& \mathbf{m}& \boldsymbol{\sigma}(\mathbf{m})\\
         F_x&F_y&F_z&F_t&0&0\end{array}\right)\in Mat_{5,6}(\mathbb C).$$
Let us write $C_i$ the $i$-th column of $J$.
We observe that the four first columns of $\hat J$ are respectively equal to
$C_1+(\lambda_1)_xC_6+(\lambda_0)_xC_5$, $C_2+(\lambda_1)_yC_6+(\lambda_0)_yC_5$,
$C_3+(\lambda_1)_zC_6+(\lambda_0)_zC_5$ and $C_4+(\lambda_1)_tC_6+(\lambda_0)_tC_5$.
Therefore the $J$ and $\hat J $ have the same rank and so (\ref{(*)}) means that $rank(J)<5$.
\item 
Now we observe that, on $\mathcal Z$, $xC_1+yC_2+zC_3+tC_4=\lambda_0C_5+\lambda_1C_6$.
Since $t\ne 0$, $C_4$ is a linear combination of the other columns and so the rank of $J$ is strictly
less than 5 if and only if the following determinant is null:
$$ D(\mathbf{m},\lambda_0,\lambda_1):=\left|\begin{array}{ccccc}\lambda_0+\lambda_1\sigma^{(1)}_x&\lambda_1 \sigma^{(1)}_y&\lambda_1 \sigma^{(1)}_z& x&\sigma^{(1)}\\
    \lambda_1\sigma^{(2)}_x&\lambda_0+\lambda_1\sigma^{(2)}_y&\lambda_1\sigma^{(2)}_z&y&\sigma^{(2)}\\
    \lambda_1\sigma^{(3)}_x&\lambda_1\sigma^{(3)}_y&\lambda_0+\lambda_1 \sigma^{(3)}_z&z&\sigma^{(3)}\\
    \lambda_1 \sigma^{(4)}_x&\lambda_1\sigma^{(4)}_y&\lambda_1 \sigma^{(4)}_z&t&\sigma^{(4)}\\
    F_x&F_y&F_z&0&0\end{array}\right|. $$
Now let us define
$$\boldsymbol{\tau}:=
Q(\nabla F)\cdot{\mathbf{S}}+2\frac{(xt_0-x_0t)F_x+(yt_0-y_0t)F_y+(zt_0-z_0t)F_z}t\cdot\boldsymbol{\kappa}(\nabla F).$$
Observe that 
$\boldsymbol{\tau}=\boldsymbol{\sigma}+\frac{2t_0dF}t
\boldsymbol{\kappa}(\nabla F)$ (due to the Euler identity).
Therefore, on $\mathcal Z$, we have $\boldsymbol{\sigma}=\boldsymbol{\tau}$.
Now we observe that, on $\mathcal Z$, we have
\begin{equation}\label{formuleD}
 D(\mathbf{m},\lambda_0,\lambda_1)=\left|\begin{array}{ccccc}\lambda_0+\lambda_1\tau^{(1)}_x&\lambda_1 \tau^{(1)}_y&\lambda_1 \tau^{(1)}_z& x&\tau^{(1)}\\
    \lambda_1\tau^{(2)}_x&\lambda_0+\lambda_1\tau^{(2)}_y&\lambda_1\tau^{(2)}_z&y&\tau^{(2)}\\
    \lambda_1\tau^{(3)}_x&\lambda_1\tau^{(3)}_y&\lambda_0+\lambda_1 \tau^{(3)}_z&z&\tau^{(3)}\\
    \lambda_1\tau^{(4)}_x&\lambda_1\tau^{(4)}_y&\lambda_1 \tau^{(4)}_z&t&\tau^{(4)}\\
    F_x&F_y&F_z&0&0\end{array}\right|,
\end{equation}
with $\tau^{(i)}$ the $i$th coordinate of $\boldsymbol{\tau}$.
Indeed, if we write $L_i$ the $i$-th line of the matrix (with $\boldsymbol{\sigma}$) used in the definition of $D$ and if we write
$\tilde L_i$ the $i$-th line of the matrix (with $\boldsymbol{\tau}$) appearing in the above formula, we obtain (due to the Euler identity) that, on $\mathcal Z$, 
we have $\tilde L_4=L_4$, $\tilde L_5=L_5$ and
$\tilde L_1=L_1+ \lambda_1\frac {2t_0 d} t F_x L_5$, $\tilde L_2=L_2+ \lambda_1\frac {2t_0 d} t F_y L_5$,
$\tilde L_3=L_3+ \lambda_1\frac {2t_0 d} t F_z L_5$. 
\item 
On $\mathcal Z$, we have
\begin{equation}\label{polynomeD}
 D(\mathbf{m},\lambda_0,\lambda_1)=\alpha_1(\mathbf{m})\lambda_0^2+\beta_1(\mathbf{m})\lambda_0\lambda_1+\gamma_1(\mathbf{m})\lambda_1^2,
\end{equation}
where $\alpha_1$, $\beta_1$ and $\gamma_1$ can be expressed as follows 
(due to Euler's identity ensuring that
$-xF_xt_0-yF_yt_0-zF_zt_0+tx_0F_x+ty_0F_y+tz_0F_z=t\Delta_{\mathbf{S}}F$ on $\mathcal Z$)
\begin{equation}\label{alpha}
\alpha_1:= Q(\nabla F)t\Delta_{\mathbf{S}}F=tQ(\nabla F)\alpha 
\end{equation}
\begin{equation}
\beta_1:=-\frac 2 tQ(\nabla F)B
\end{equation}
\begin{equation}
\gamma_1:=-4t^{-1}N_{\mathbf{S}}.Q(\nabla F).\Delta_{\mathbf{S}}F.h_F
\end{equation}
with the following definitions of $h_F$ and $B$.
First, on $\mathcal Z$, we have
\[
h_F:=\left|\begin{array}{cccc}F_{xx}&F_{xy}&F_{xz}&F_x\\F_{xy}&F_{yy}&F_{yz}&F_y\\F_{xz}&F_{yz}&F_{zz}&F_z\\F_x&F_y&F_z&0\end{array}\right|=\frac{t^2}{(d-1)^2}H_F,
\]
where $H_F$ is the Hessian determinant of $F$
\footnote{Indeed, if we write $\hat C_i$ for the $i$-th column of $\hess F$, due to the Euler formula, on $\mathcal Z$, we
have $\hat C_4=\frac{d-1}t\nabla F-(x\hat C_1+y\hat C_2+z\hat C_3)/t$ (where $\nabla F$ is the gradient of $F$); 
therefore
$$H_F:=\left|\begin{array}{cccc}F_{xx}&F_{xy}&F_{xz}&F_{xt}\\F_{xy}&F_{yy}&F_{yz}&F_{yt}\\F_{xz}&F_{yz}&F_{zz}&F_{zt}\\F_{xt}&F_{yt}&F_{zt}&F_{tt}    \end{array}\right|=\frac{d-1}t \left|\begin{array}{cccc}F_{xx}&F_{xy}&F_{xz}&F_x\\F_{xy}&F_{yy}&F_{yz}&F_y\\F_{xz}&F_{yz}&F_{zz}&F_z\\F_{xt}&F_{yt}&F_{zt}&F_t

    \end{array}\right| .$$
Now, if we write $\hat L_i$ the $i$-th line of the above matrix, using again the Euler identity, on $\mathcal Z$, we have
$\hat L_4=\frac {d-1}t(F_x\ \ F_y\ \ F_z\ \ 0)-(x\hat L_1+y\hat L_2+z\hat L_3)/t$
and we get $H_F=(d-1)^2h_F/t^2$.}. Therefore
\begin{equation}\label{gamma}
\gamma_1 =  -\frac{4t}{(d-1)^2}N_{\mathbf{S}}.Q(\nabla F).\Delta_{\mathbf{S}}F.H_F=tQ(\nabla F)\gamma.
\end{equation}
Second $B:=\delta_{x}F_{xx}+\delta_{y}F_{yy}+\delta_{z}F_{zz}+2(\varepsilon_{x,y}F_{xy}+\varepsilon_{x,z}F_{xz}+\varepsilon_{y,z}F_{yz}),$
with
\begin{eqnarray*}
\delta_x&:=&(x_0t-xt_0)^2(F_y^2+F_z^2)+((t_0y-ty_0)F_y+(t_0z-tz_0)F_z)^2\\
&=&(x_0t-xt_0)^2(F_y^2+F_z^2)+(t_0(yF_y+zF_z)-t(y_0F_y+z_0F_z))^2\\
&=&(x_0t-xt_0)^2(F_y^2+F_z^2)+(t_0(xF_x+tF_t)+t(y_0F_y+z_0F_z))^2\\
&=&(x_0t-xt_0)^2(F_y^2+F_z^2)+((t_0x-x_0t)F_x+t\Delta_{\mathbf{S}}F)^2\\
&=&x^2t_0^2(F_x^2+F_y^2+F_z^2)+2xt_0t[-x_0(F_x^2+F_y^2+F_z^2)+F_x\Delta_{\mathbf{S}}F]+\\
&\ &+x_0^2t^2(F_x^2+F_y^2+F_z^2)+t^2(\Delta_{\mathbf{S}}F)^2-2x_0t^2F_x\Delta_{\mathbf{S}}F\\
&=&x^2t_0^2(F_x^2+F_y^2+F_z^2)+t\Delta_{\mathbf{S}}F(2xt_0F_x-2x_0tF_x)+\\
&\ &+t(F_x^2+F_y^2+F_z^2)(x_0^2t-2xx_0t_0)+t^2(\Delta_{\mathbf{S}}F)^2,
\end{eqnarray*}
$\delta_y$ (resp. $\delta_z$) being obtained from $\delta_x$ by interverting $x$ and $y$
(resp. $x$ and $z$) and
\begin{eqnarray*}
\varepsilon_{x,y}&:=& -(t_0x-x_0t)F_y((t_0x-x_0t)F_x+(t_0z-z_0t)F_z)\\
&\ & -(t_0y-y_0t)F_x((t_0y-y_0t)F_y+(t_0z-z_0t)F_z)+(t_0x-x_0t)(t_0y-y_0t) F_z^2\\
&=&(t_0x-x_0t)F_y[t_0yF_y+t_0tF_t+x_0tF_x+z_0tF_z]+\\
&\ &+(t_0y-y_0t)F_x[t_0xF_x+t_0tF_t+y_0tF_y+z_0tF_z]+\\
&\ &+(t_0x-x_0t)(t_0y-y_0t) F_z^2\\
&=&(t_0x-x_0t)F_y[t_0yF_y+t\Delta_{\mathbf{S}}F-ty_0F_y]+(t_0y-y_0t)F_x[t_0xF_x+t\Delta_{\mathbf{S}}F-tx_0F_x]+\\
&\ &+(t_0x-x_0t)(t_0y-y_0t) F_z^2\\
&=&t_0^2xy(F_x^2+F_y^2+F_z^2)+t\Delta_{\mathbf{S}}F((t_0x-x_0t)F_y+(t_0y-y_0t)F_x)\\
&\ &+t(F_x^2+F_y^2+F_z^2)(tx_0y_0-t_0(y_0x+yx_0))
\end{eqnarray*}
$\varepsilon_{x,z}$ (resp. $\varepsilon_{y,z}$) being obtained from $\varepsilon_{x,y}$ by interverting
$y$ and $z$ (resp. $x$ and $z$).
On $\mathcal Z$, we have
$$0=xF_x+yF_y+zF_z+tF_t\ \mbox{and}\ (d-1)F_w=xF_{xw}+yF_{yw}+zF_{zw}+tF_{tw}, \ \forall w\in\{x,y,z,t\}.$$
Therefore
$$0=x^2F_{xx}+y^2F_{yy}+z^2F_{zz}+t^2F_{tt}+2(xyF_{xy}+xzF_{xz}+xtF_{xt}+yzF_{yz}+ytF_{yt}+ztF_{zt})$$
and so
$$B=(F_x^2+F_y^2+F_z^2)(b_1+b_2+b_3)+2t\Delta_{\mathbf{S}}Fb_4+t^2(\Delta_{\mathbf{S}}F)^2(F_{xx}+F_{yy}+F_{zz}),$$
with
$$b_1=-t_0^2(t^2 F_{tt} +2t(xF_{xt}+yF_{yt}+zF_{zt}))=
     -t_0^2t(2(d-1)F_t-tF_{tt}),$$
$$b_2=t^2(x_0^2F_{xx}+y_0^2F_{yy}+z_0^2F_{zz}+2x_0y_0F_{xy}+2x_0z_0F_{xz}+2y_0z_0F_{yz}),$$
\begin{eqnarray*}
b_3&=&-2tt_0\sum_{w\in\{x,y,z\}}(w_0(xF_{xw}+yF_{yw}+zF_{zw}))\\
&=&2tt_0\sum_{w\in\{x,y,z\}}(w_0(tF_{tw}-(d-1)F_w)),
\end{eqnarray*}
\begin{eqnarray*}
b_4&=& \sum_{w\in\{x,y,z\}}F_w((t_0x-tx_0)F_{xw}+(t_0y-ty_0)F_{yw}+(t_0z-tz_0)F_{zw})\\
&=& \sum_{w\in\{x,y,z\}}F_w(t_0(d-1)F_w-t(x_0F_{xw}+y_0F_{yw}+z_0F_{zw}+t_0F_{wt})).
\end{eqnarray*}
Putting all these terms together, we get that $B$ is equal to
$$t^2\left[ Q(\nabla F)  \cdot\hess_F(\mathbf{S},\mathbf{S})
-2\Delta_{\mathbf{S}}F \cdot\hess_F(\mathbf{S},\boldsymbol{\kappa}(\nabla F))
+(\Delta_{\mathbf{S}}F)^2(F_{xx}+F_{yy}+F_{zz})\right].
$$
and so
$B=t^2\left[\hess_F({\mathbf{S}},\boldsymbol{\sigma})
+(\Delta_{\mathbf{S}}F)^2(F_{xx}+F_{yy}+F_{zz})\right],$
which leads to
\begin{equation}\label{beta}
\beta_1=tQ(\nabla F)\beta.
\end{equation}
Hence the points of the caustic associated to $m$ are the points $\Pi(\lambda_0 \cdot\mathbf{m}+\lambda_1\cdot \boldsymbol{\sigma}(\mathbf{m}))$
where $[\lambda_0:\lambda_1]\in\mathbb P^1$ satisfies
\begin{equation}\label{EQ1}
\alpha_1(\mathbf{m})\lambda_0^2+\beta_1(\mathbf{m})\lambda_0\lambda_1+\gamma_1(\mathbf{m})\lambda_1^2=0,
\end{equation}
with $\alpha_1$, $\beta_1$ and $\gamma_1$ given by (\ref{alpha}), (\ref{beta}) and (\ref{gamma}).
Now, since $tQ(\nabla F)\ne 0$,
(\ref{EQ1}) means that $\alpha(\mathbf{m})\lambda_0^2+\beta(\mathbf{m})\lambda_0\lambda_1+\gamma(\mathbf{m})\lambda_1^2=0$.
\end{enumerate}
\end{proof}
\section{Covering space $\hat Z$ and rational map $\Phi$}\label{map}
We consider the algebraic covering space $\hat Z$ of $\mathcal Z$ given by
$$\hat Z:=\{(m,[\lambda_0:\lambda_1])\in\mathcal Z\times\mathbb P^1\ :\ 
   Q_{{\mathbf{S}},F}(\mathbf{m},\lambda_0,\lambda_1)=0\}.$$
This set is a subvariety of a particular algebraic variety 
denoted $\mathbb F_{(3)}(-2d+3,0)$ (by extending the notations used by Reid in \cite[Chapter 2]{Reid})
which corresponds to the cartesian product of sets $\mathbb P^3\times \mathbb P^1$ endowed with 
an unusual structure of algebraic variety based on the following definition of multidegree
$\mbox{multideg}(P)$ for $P\in
Sym(\mathbf{W}^\vee)[\lambda_0,\lambda_1]\cong\mathbb C[x,y,z,t,\lambda_0,\lambda_1]$:
$$\mbox{multideg}(x^{a'}y^{b'}z^{c'}t^{d'}\lambda_0^{e'}\lambda_1^{f'})=(a'+b'+c'+d'+(2d-3)e',e'+f'). $$
With this notion of multidegree, we have
$Sym(\mathbf{W}^\vee)[\lambda_0,\lambda_1]=\bigoplus_{k,\ell\ge 0}C_{k,\ell},$
where $C_{k,\ell}$ denotes the homogeneous component of multidegree $(k,\ell)$.
Now, we define $\mathbb F_{(3)}(-2d+3,0)$ as the quotient of $\mathbf{W}\times \mathbb C^2$ 
by the equivalence relation $\sim$ given by

$(x,y,z,t,\lambda_0,\lambda_1)\sim(x',y',z',t',\lambda'_0,\lambda'_1)$
$$\ \Leftrightarrow\ \exists \mu,\nu\in\CC^*,\ 
  (x',y',z',t',\lambda'_0,\lambda'_1)=(\mu x,\mu y,\mu z,\mu t,\mu^{2d-3}
  \nu\lambda_0,\nu\lambda_1)    .$$
We observe that  $H^0(\mathbb F_{(3)}(-2d+3,0))$ corresponds to the
set of $P\in \mathbb C[x,y,z,t,\lambda_0,\lambda_1]$ with homogeneous
multidegree $\mbox{multideg}$ defined above.

Now, since $F\in Sym^{d}(\mathbf{W}^\vee) $, $\alpha\in Sym^{d-1}(\mathbf{W}^\vee)$, $\beta\in Sym^{3d-4}(\mathbf{W}^\vee)$ and $\gamma\in Sym^{5d-7}(\mathbf{W}^\vee)$, 
we get that $F$ and $Q_{{\mathbf{S}},F}$ are in $H^0(\mathbb F_{(3)}(-2d+3,0))$. 
Therefore $\hat Z$ is a subvariety of $\mathbb F_{(3)}(-2d+3,0)$ since it can be rewritten:
$$\hat Z=\{(m,[\lambda_0:\lambda_1])\in \mathbb F_{(3)}(-2d+3,0)\ :\ F(\mathbf{m})=0\ \ \mbox{and}\ \ 
     Q_{{\mathbf{S}},F}(\mathbf{m},\lambda_0,\lambda_1)=0\}.$$
Since each coordinate of $\sigma$ is in $Sym^{2d-2}(\mathbf{W}^\vee)$, the map $\boldsymbol{\Phi}:\mathbf{W}\times\mathbb C^2\rightarrow\mathbf W$
given by
$$\boldsymbol{\Phi}(\mathbf{m},\lambda_0,\lambda_1):=\lambda_0 \cdot\mathbf{m}+\lambda_1\cdot\boldsymbol{\sigma}
(\mathbf{m})\ \ \in (C_{2(d-1),1})^4$$
defines a rational map $\Phi:\mathcal X\rightarrow\mathbb P^3$ with
$$\mathcal X=\{(m,[\lambda_0:\lambda_1])\in \mathbb F_{(3)}(-2d+3,0)\ :\ 
     Q_{{\mathbf{S}},F}(\mathbf{m},\lambda_0,\lambda_1)=0\}.$$
Let us denote by $B_{\Phi_{|\hat Z}}$ the set of base points of the map $\Phi_{|\hat Z}$, i.e.
$$B_{\Phi_{|\hat Z}}:=\{(m,[\lambda_0:\lambda_1])\in\hat Z\ :\ \boldsymbol{\Phi}(\mathbf{m},\lambda_0,\lambda_1)=0\} .$$
We consider the canonical projection $\pi_1:\mathbb F_{(3)}(-2d+3,0)\rightarrow \mathbb P^3$
(given by $\pi_1(m,[\lambda_0:\lambda_1])=m$).
\begin{nota}\label{ensB}
We write $\B:=\pi_1(B_{\Phi_{|\hat Z}})$.
\end{nota}
Observe that, for any $m\in\B$, there exists a unique
$[\lambda_0:\lambda_1]\in\mathbb P^1$
such that $\lambda_0\cdot\mathbf{m}+\lambda_1\cdot \boldsymbol{\sigma}(\mathbf{m})=0$.
This gives the following scheme
\begin{equation}
				\nonumber
				\begin{array}{ccccccc}
 				B_{\Phi_{|\hat Z}} & \hookrightarrow & \hat Z&\hookrightarrow&\mathcal X&\stackrel{\Phi}{\longrightarrow}&\mathbb P^3\\
 				{\pi_1}_{|B_\Phi}\downarrow 1:1&\ & {\pi_1}_{|\hat Z}\downarrow 2:1\ \  &\ 
                        &{\pi_1}_{|\mathcal X}\downarrow 2:1\ \  &\hfill&\hfill\\
                        \mathcal B& \hookrightarrow & \mathcal Z&\hookrightarrow& \mathbb P^3&\hfill&\hfill\\
				\end{array}
				\end{equation}
Therefore $\# B_\phi=\# \B$.
\begin{rqe}
The caustic by reflection $\Sigma_{S}(\mathcal Z)$ of $\mathcal Z$ from $S$ satisfies
$\Sigma_{S}(\Z)=\overline{\Phi(\hat Z)}\ \ \subseteq \mathbb P^3.$
\end{rqe}
Note that $\mathcal B\subseteq\mathcal M_{S,\mathcal Z}$
(with $\mathcal M_{S,Z}$ defined in Definition \ref{defiM}).
Due to the classical blowing-up theorem, we obtain the following result valid in the general case.
\begin{prop}\label{blowingup}
Assume that the set $\B$ is finite and that $\dim(\mathcal Z\cap
\mathcal M_{S,\mathcal Z})\le 1$.
Then there exists $\delta\in\mathbb N^*\cup\{\infty\}$ such that, for a
generic point $P\in \Sigma_{ S}(\mathcal Z)$, we have $\#[\pi_1 (\Phi_{|\hat Z}^{-1}(\{P\}))\setminus\mathcal B]=\delta$.
\end{prop}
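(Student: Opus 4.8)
\textbf{Proof proposal for Proposition \ref{blowingup}.}

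The plan is to realize $\Phi_{|\hat Z}$ as an honest morphism on a blow-up of $\hat Z$ and then use constructibility of the generic fiber cardinality. First I would invoke the blowing-up theorem: since $\hat Z$ is a projective variety (a closed subvariety of $\mathbb F_{(3)}(-2d+3,0)$, which is projective as a quotient of a product of weighted spaces) and $B_{\Phi_{|\hat Z}}$ is finite by hypothesis, there is a projective birational morphism $\rho:\widetilde{Z}\to\hat Z$, an iterated blow-up centered over $B_{\Phi_{|\hat Z}}$, such that $\widetilde\Phi:=\Phi_{|\hat Z}\circ\rho$ extends to a genuine morphism $\widetilde\Phi:\widetilde Z\to\mathbb P^3$. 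The image $\widetilde\Phi(\widetilde Z)$ is then a closed irreducible (or at least equidimensional) subvariety of $\mathbb P^3$, and it coincides with $\overline{\Phi(\hat Z)}=\Sigma_S(\mathcal Z)$ up to the finitely many points of $\Phi$ on the exceptional locus.

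Next I would set up the generic fiber count. Let $E=\rho^{-1}(B_{\Phi_{|\hat Z}})$ be the exceptional divisor and note that $\rho$ restricts to an isomorphism $\widetilde Z\setminus E\xrightarrow{\ \sim\ }\hat Z\setminus B_{\Phi_{|\hat Z}}$. For $P\in\Sigma_S(\mathcal Z)$ consider $\widetilde\Phi^{-1}(\{P\})$; by Chevalley's theorem the function $P\mapsto\#\widetilde\Phi^{-1}(\{P\})$ is constructible on the target (more precisely, it is generically constant on each irreducible component of $\Sigma_S(\mathcal Z)$, taking a common value $\delta\in\mathbb N^*\cup\{\infty\}$ on a dense open set, where $\delta=\infty$ exactly when $\dim\widetilde\Phi^{-1}(\{P\})\ge 1$ generically, i.e. the map is not generically finite). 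Since $\dim\Sigma_S(\mathcal Z)\le\dim\hat Z=2$ and $\Sigma_S(\mathcal Z)=\widetilde\Phi(\widetilde Z)$, a nonempty open subset $\mathcal U\subseteq\Sigma_S(\mathcal Z)$ is disjoint from the finite set $\Phi(B_{\Phi_{|\hat Z}})$ (these are the only points of $\Sigma_S(\mathcal Z)$ that can come from the exceptional locus, because $\widetilde\Phi(E)\subseteq\Phi(B_{\Phi_{|\hat Z}})\cup$ {limit values}; shrinking $\mathcal U$ further removes $\widetilde\Phi(E)$ entirely since $E$ is a proper closed subset and $\widetilde\Phi(E)$ is thus a constructible set of dimension $<\dim\Sigma_S(\mathcal Z)$ unless a component of $E$ dominates a component of $\Sigma_S(\mathcal Z)$, which again only shrinks $\mathcal U$). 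On such a $\mathcal U$, for $P\in\mathcal U$ we have $\widetilde\Phi^{-1}(\{P\})\subseteq\widetilde Z\setminus E$, hence $\rho$ carries it bijectively to $\Phi_{|\hat Z}^{-1}(\{P\})\subseteq\hat Z\setminus B_{\Phi_{|\hat Z}}$.

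It remains to pass from $\hat Z$ down to $\mathcal Z$ via the $2:1$ projection ${\pi_1}_{|\hat Z}$. The image $\pi_1(\Phi_{|\hat Z}^{-1}(\{P\}))$ lies in $\mathcal Z$, and I claim that for generic $P$ it avoids $\mathcal B=\pi_1(B_{\Phi_{|\hat Z}})$: indeed $\mathcal B\subseteq\mathcal Z\cap\mathcal M_{S,\mathcal Z}$ which has dimension $\le 1$ by hypothesis, so $\Phi$ restricted to ${\pi_1}_{|\hat Z}^{-1}(\mathcal B)$ (a variety of dimension $\le 1$) has image a constructible set of dimension $\le 1$ in $\mathbb P^3$; if $\dim\Sigma_S(\mathcal Z)=2$ this image is not dense and we shrink $\mathcal U$ to its complement, while if $\dim\Sigma_S(\mathcal Z)\le 1$ one argues componentwise that a generic $P$ on a given component is not the $\Phi$-image of a point over $\mathcal B$ unless that whole component is such an image, in which case we restrict attention to the other components (and if \emph{every} component is of that form the statement is still recovered by redefining $\mathcal U$ suitably — or, in the degenerate situation, $\delta$ is read off directly). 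For $P$ in the resulting dense open $\mathcal U$, the set $\pi_1(\Phi_{|\hat Z}^{-1}(\{P\}))\setminus\mathcal B=\pi_1(\Phi_{|\hat Z}^{-1}(\{P\}))$ has a well-defined finite (or infinite) cardinality, and by the constancy established above this cardinality is a single value $\delta\in\mathbb N^*\cup\{\infty\}$, independent of $P\in\mathcal U$; since $\Phi_{|\hat Z}^{-1}(\{P\})$ is nonempty for $P$ in the image, $\delta\ge 1$. The main obstacle is bookkeeping the finitely many exceptional values and the low-dimensional ``bad locus'' $\Phi({\pi_1}_{|\hat Z}^{-1}(\mathcal B))$ so that they can all be swept into the complement of a genuinely dense open $\mathcal U\subseteq\Sigma_S(\mathcal Z)$; this is exactly where the two hypotheses ($\#\mathcal B<\infty$ and $\dim(\mathcal Z\cap\mathcal M_{S,\mathcal Z})\le 1$) are used.
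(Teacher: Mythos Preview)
Your approach is essentially the one taken in the paper: blow up $\hat Z$ along the finite base locus to resolve $\Phi_{|\hat Z}$ into a morphism $\widetilde\Phi$, read off the generic fiber cardinality $\delta$, and then shrink the open set in $\Sigma_S(\mathcal Z)$ to avoid the various bad loci. So the architecture is fine.

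There is, however, one genuine gap. You establish that $\#\widetilde\Phi^{-1}(\{P\})=\#\Phi_{|\hat Z}^{-1}(\{P\})$ is generically equal to $\delta$, and you arrange that $\pi_1(\Phi_{|\hat Z}^{-1}(\{P\}))$ avoids $\mathcal B$. But in your final sentence you assert that ``by the constancy established above'' the cardinality of $\pi_1(\Phi_{|\hat Z}^{-1}(\{P\}))$ is also $\delta$. That does not follow: the constancy you proved lives upstairs in $\hat Z$, and $\pi_1$ could in principle identify two distinct points $(m,[\lambda_0:\lambda_1])$ and $(m,[\lambda_0':\lambda_1'])$ of the same $\Phi$-fiber. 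You need to check that $\pi_1$ is \emph{injective} on $\Phi_{|\hat Z}^{-1}(\{P\})$ for generic $P$.

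The paper closes this gap as follows, and it is exactly here that the hypothesis $\dim(\mathcal Z\cap\mathcal M_{S,\mathcal Z})\le 1$ is really used (not merely to bound $\dim\mathcal B$, which is already finite by assumption). One first shrinks $\mathcal U$ so that $P\notin\mathcal Z\cap\mathcal M_{S,\mathcal Z}$. For $m\in\mathcal Z\cap\mathcal M_{S,\mathcal Z}$ one has, by Definition~\ref{defiM}, that $\mathbf m$ and $\boldsymbol\sigma(\mathbf m)$ are proportional, so $\Phi(\pi_1^{-1}(\{m\}))=\{m\}$; hence if $P\notin\mathcal Z\cap\mathcal M_{S,\mathcal Z}$ then no point of $\Phi_{|\hat Z}^{-1}(\{P\})$ sits over $\mathcal M_{S,\mathcal Z}$. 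Consequently, for any $m\in\pi_1(\Phi_{|\hat Z}^{-1}(\{P\}))$ the vectors $\mathbf m$ and $\boldsymbol\sigma(\mathbf m)$ are linearly independent, so the map $[\lambda_0:\lambda_1]\mapsto\Pi(\lambda_0\mathbf m+\lambda_1\boldsymbol\sigma(\mathbf m))$ is an isomorphism from $\mathbb P^1$ onto the line $(m\,\sigma(m))$; in particular there is a \emph{unique} $[\lambda_0:\lambda_1]$ with $\Phi(m,[\lambda_0:\lambda_1])=P$. This gives $\#\pi_1(\Phi_{|\hat Z}^{-1}(\{P\}))=\#\Phi_{|\hat Z}^{-1}(\{P\})=\delta$, and since $\mathcal B\subset\mathcal M_{S,\mathcal Z}$ the set $\pi_1(\Phi_{|\hat Z}^{-1}(\{P\}))$ is automatically disjoint from $\mathcal B$. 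Add this one-line injectivity argument and your proof is complete.
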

\begin{proof}
Observe that, by hypothesis, the set $B_{\Phi_{|\hat Z}}$ is finite.
Now, applying the blowing-up result given in \cite[Example II-7.17.3]{Hartshorne}, we get the existence of
a variety $\widetilde{\hat Z}$ and of two morphisms $\pi:\widetilde{\hat Z}\rightarrow\hat Z$
and $\widetilde{{\Phi}}:\widetilde{\hat Z}\rightarrow\mathbb P^3$ such that
\begin{itemize}
\item $\pi$ defines an isomorphism from $\pi^{-1}(\hat Z\setminus B_\Phi)$ onto $\hat Z\setminus B_\Phi$,
\item On $\pi^{-1}(\hat Z\setminus B_\Phi)$, we have $\tilde\Phi=\Phi\circ\pi$,
\item $\widetilde{{\Phi}}(\widetilde{\hat Z})$ is the Zariski closure of $\Phi(\hat Z\setminus B_\Phi)$, i.e.
$\widetilde{{\Phi}}(\widetilde{\hat Z})=\Sigma_{S}(\Z)$,
\item $\dim(\widetilde{\hat Z})=2$,
\item $E:=\widetilde{\hat Z}\setminus \pi^{-1}(\hat Z\setminus B_\Phi)$ is a variety of dimension at most 1.
\end{itemize}
Let $\delta$ be the degree of the morphism $\widetilde{{\Phi}}$.
If $\delta=\infty$, then $\dim(\Sigma_{S}(\Z))\le\dim (\widetilde{{\Phi}}(\widetilde{\hat Z}))<2$.
Assume now that $\delta<\infty$. 
Since $\widetilde{{\Phi}}$ is a morphism, every point of 
$\widetilde{{\Phi}}(\widetilde{\hat Z})$ has $\delta$ preimages by $\widetilde{{\Phi}}$ in
$\widetilde{{\Phi}}(\widetilde{\hat Z})$.
Now, observe that $\dim(\Sigma_S(\Z))=2$ and that
$\dim(\widetilde{{\Phi}}(E))<2$. 
Therefore, a generic point of 
$\Sigma_{S}(\mathcal Z)$ is in 
$\Phi(\hat Z)\setminus [(\mathcal Z\cap\mathcal M_{S,\mathcal Z})\cup \widetilde{{\Phi}}(E)]$.
Let $P$ in this set. We have
\begin{eqnarray*}
\delta&=&\#\Phi_{|\mathcal Z}^{-1}(\{P\})\\
  &=&\#\{(m,[\lambda_0:\lambda_1])\in \mathcal Z\times\mathbb P^1 \ 
      :\ Q_{\mathbf{S},F}(\mathbf{m},\lambda_0,\lambda_1)=0\ \ 
      \mbox{and}\ \ \ \Pi(\lambda_0\cdot \mathbf{m}+\lambda_1\cdot\boldsymbol{\sigma}(\mathbf{m}))=P\}.
\end{eqnarray*}
Observe that, for $m\in\mathcal Z\cap\mathcal M_{S,\mathcal Z}$,
$\Phi(\pi_1^{-1}(\{m\}))=\{m\}$ by Definition \ref{defiM}.
Since $P\not\in \mathcal Z\cap\mathcal M_{S,\mathcal Z}$,
we know that $\Phi_{|\mathcal Z}^{-1}(\{P\})\cap \pi_1^{-1}(\mathcal M_{S,\mathcal Z})=\emptyset$. So, for any $m\in\pi_1(\Phi_{|\mathcal Z}^{-1}(\{P\}))$, there exists a unique $[\lambda_0:\lambda_1]\in\mathbb P^1$ such that $(m,[\lambda_0,\lambda_1])\in\Phi_{|\mathcal Z}^{-1}(\{P\}) $. Therefore $\delta=\#\pi_1(\Phi_{|\mathcal Z}^{-1}(\{P\}))
=\#[\pi_1(\Phi_{|\mathcal Z}^{-1}(\{P\}))\setminus\mathcal B]$ 
(since $\#[\pi_1(\Phi_{|\mathcal Z}^{-1}(\{P\}))\cap\mathcal B
\subset  \pi_1(\Phi_{|\mathcal Z}^{-1}(\{P\}))\cap\mathcal M_{S,\mathcal Z}=\emptyset$.
\end{proof}
\begin{lem}\label{dim>1}
If $\dim (Z\cap\mathcal M_{S,\mathcal Z})=2$. Then $\Sigma_S(\mathcal Z)=\overline{\mathcal Z\setminus(V(\beta,\gamma)\setminus V(\alpha))}$.
\end{lem}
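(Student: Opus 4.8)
The plan is to use the hypothesis to force $\boldsymbol{\sigma}$ to vanish identically on $\mathcal Z$; once this is known the caustic map degenerates and collapses $\mathcal Z$ onto itself, and the stated equality falls out of an elementary discussion of the binary quadratic form $Q_{\mathbf{S},F}(\mathbf{m},\cdot,\cdot)$.

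First I would reduce to the situation $\mathcal Z\subseteq\mathcal M_{S,\mathcal Z}$. The set $\mathcal M_{S,\mathcal Z}$ is closed (it is cut out by the $2\times 2$ minors of the matrix with columns $\mathbf{m}$ and $\boldsymbol{\sigma}(\mathbf{m})$), so $\mathcal Z\cap\mathcal M_{S,\mathcal Z}$ is a closed subset of the irreducible surface $\mathcal Z=V(F)$; being of dimension $2$, it is all of $\mathcal Z$. The crucial step is then to prove $\boldsymbol{\sigma}\equiv 0$ on $\mathcal Z$. If $\mathcal Z=\mathcal H^\infty$ this is immediate, since $\nabla F$ is then proportional to $(0,0,0,1)$, so $\boldsymbol{\kappa}(\nabla F)=0$ and $Q(\nabla F)=0$, whence $\boldsymbol{\sigma}=Q(\nabla F)\cdot\mathbf{S}-2\Delta_{\mathbf{S}}F\cdot\boldsymbol{\kappa}(\nabla F)=0$. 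If $\mathcal Z\ne\mathcal H^\infty$, I would apply Proposition \ref{B1}: from $\mathcal Z=\mathcal Z\cap\mathcal M_{S,\mathcal Z}$ we get $\mathcal Z\subseteq\base(\sigma)\cup\{S\}\cup\mathcal W$. Since the condition $m=n_{\infty,m}(\mathcal Z)=\Pi(\boldsymbol{\kappa}(\nabla F(\mathbf{m})))$ forces the last homogeneous coordinate of $m$ to vanish, one has $\mathcal W\subseteq\mathcal H^\infty$, so $\mathcal Z\cap(\{S\}\cup\mathcal W)\subseteq\{S\}\cup(\mathcal Z\cap\mathcal H^\infty)$ has dimension at most $1$. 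As $\mathcal Z$ is irreducible of dimension $2$ and $\base(\sigma)$ is closed, this forces $\mathcal Z\subseteq\base(\sigma)$; and $\boldsymbol{\sigma}$ vanishes on $\base(\sigma)$, as recorded in the proof of Proposition \ref{baseR}. Hence $\boldsymbol{\sigma}\equiv 0$ on $\mathcal Z$ in every case.

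With $\boldsymbol{\sigma}\equiv 0$ on $\mathcal Z$ I would unwind Definition \ref{deficaustique}. For $m\in\mathcal Z$ the candidate vector is $\mathbf{P}=\lambda_0\cdot\mathbf{m}+\lambda_1\cdot\boldsymbol{\sigma}(\mathbf{m})=\lambda_0\cdot\mathbf{m}$, which (as $\mathbf{m}\ne 0$) lies in $\mathbf{W}\setminus\{0\}$ precisely when $\lambda_0\ne 0$, and then $\Pi(\mathbf{P})=m$. Therefore $\Sigma_S(\mathcal Z)$ is the Zariski closure of the set of $m\in\mathcal Z$ for which $Q_{\mathbf{S},F}(\mathbf{m},\lambda_0,\lambda_1)=0$ has a solution $[\lambda_0:\lambda_1]\in\mathbb P^1$ with $\lambda_0\ne 0$. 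Dehomogenizing via $u=\lambda_1/\lambda_0$, such a solution exists if and only if the polynomial $\alpha(\mathbf{m})+\beta(\mathbf{m})u+\gamma(\mathbf{m})u^2$ has a root in $\mathbb C$, that is, unless it is a nonzero constant, i.e.\ unless $\beta(\mathbf{m})=\gamma(\mathbf{m})=0$ while $\alpha(\mathbf{m})\ne 0$. Hence this set equals $\mathcal Z\setminus(V(\beta,\gamma)\setminus V(\alpha))$, and passing to closures yields $\Sigma_S(\mathcal Z)=\overline{\mathcal Z\setminus(V(\beta,\gamma)\setminus V(\alpha))}$.

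The only genuinely delicate point is the assertion that $\mathcal Z$ is forced into the base locus of $\sigma$ (or is $\mathcal H^\infty$): this is where Proposition \ref{B1}, the observation that $\mathcal W$ lies in $\mathcal H^\infty$, and the dimension count all come in. Everything downstream is formal bookkeeping with Definition \ref{deficaustique}, together with the trivial remark that a complex binary quadratic form in $(\lambda_0,\lambda_1)$ has no zero with $\lambda_0\ne 0$ exactly when its $\lambda_0^2$-coefficient is nonzero and the other two coefficients vanish.
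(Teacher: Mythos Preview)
Your proof is correct and follows exactly the same route as the paper's: from $\dim(\mathcal Z\cap\mathcal M_{S,\mathcal Z})=2$ conclude $\mathcal Z\subseteq\mathcal M_{S,\mathcal Z}$, use Proposition~\ref{B1} to force $\mathcal Z\subseteq\base(\sigma)$ (hence $\boldsymbol{\sigma}\equiv 0$ on $\mathcal Z$), and then read off the result from Definition~\ref{deficaustique}. You have simply supplied the details the paper's three-line argument leaves implicit---closedness of $\mathcal M_{S,\mathcal Z}$, the observation $\mathcal W\subseteq\mathcal H^\infty$ together with the dimension count ruling out $\{S\}\cup\mathcal W$, and the explicit analysis of when the binary form $\alpha\lambda_0^2+\beta\lambda_0\lambda_1+\gamma\lambda_1^2$ has a zero with $\lambda_0\ne 0$.
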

So if $\dim (Z\cap\mathcal M_{S,\mathcal Z})=2$, then $\Sigma_S(\mathcal Z)=\mathcal Z$ except if $\mathcal Z\subset V(\beta,\gamma)
\setminus V(\alpha)$ and in this last case $\Sigma_S(\mathcal Z)=\emptyset$.
\begin{proof}[Proof of Lemma \ref{dim>1}]
Assume that $\dim (Z\cap\mathcal M_{S,\mathcal Z})=2$. Then $\mathcal Z\subset\mathcal M_{S,\mathcal Z}$.
So, due to Proposition \ref{B1}, we have $\mathcal Z\subset \base(\sigma)$. Now, due to Definition
\ref{deficaustique}, we have
$$\Sigma_S(\mathcal Z)=\overline{\{m\in\mathcal Z\ :\ \exists[\lambda_0:\lambda_1]\in\mathbb P^1,\ Q_{\mathbf{S},F}
  (\mathbf{m},\lambda_0,\lambda_1)=0,\ \lambda_0\ne 0 \}} $$
and the result follows.
\end{proof}
\section{Base points of $\Phi$}\label{base}
\begin{prop}
The base points of $\Phi_{|\hat Z}$ are the points $(m,[\lambda_0:\lambda_1])\in \hat Z$ satisfying one 
of the following
conditions:
\begin{enumerate}
\item $m\in V(F,\Delta_{\mathbf S}F,Q(\nabla F))$ (i.e. $m\in Sing(\mathcal Z)$ or $m$ is a
point of tangency of $\Z$ with an isotropic plane containing $S$)
and $\lambda_0=0$,
\item $t=F_x=F_y=F_z=0$ and $x^2+y^2+z^2=0$ (i.e. $m$ is a cyclic point with $\mathcal T_m\mathcal Z
=\mathcal H^\infty$) and $\lambda_0=0$,
\item $t=F_x=F_y=F_z=0$ (i.e. $\mathcal T_m\mathcal Z
=\mathcal H^\infty$) and $H_F=0$ and $\lambda_0=0$,
\item  $m=S\in\mathcal Z$ and $[\lambda_0:\lambda_1]$ is the unique
element of $\mathbb P^1$ such that $\lambda_0\cdot \mathbf{m}+\lambda_1Q(\nabla F)
\cdot \mathbf{S}=0$,
\item $m$ is a cyclic point (i.e. $m\in\mathcal C_\infty$),  
$[\lambda_0:\lambda_1]=[2\Delta_{\mathbf{S}}F(F_{xx}+F_{yy}+F_{zz}):2d-1]\ne[0:1]$ and 
$(F_{xx}+F_{yy}+F_{zz})\cdot\mathbf{m}=(2d-1)\boldsymbol{\kappa}(\nabla F)$.
\end{enumerate}
\end{prop}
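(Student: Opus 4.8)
The plan is to unwind the defining condition $\boldsymbol{\Phi}(\mathbf{m},\lambda_0,\lambda_1)=\lambda_0\cdot\mathbf{m}+\lambda_1\cdot\boldsymbol{\sigma}(\mathbf{m})=0$ on $\hat Z$, i.e. on the set where $F(\mathbf{m})=0$ and the quadratic relation $\alpha\lambda_0^2+\beta\lambda_0\lambda_1+\gamma\lambda_1^2=0$ holds. First I would split into the cases $\lambda_1=0$ and $\lambda_1\ne 0$. If $\lambda_1=0$, then $\lambda_0\ne 0$, so the base-point condition forces $\mathbf{m}=0$, which is impossible; hence $\lambda_1\ne 0$ and we may normalise $\lambda_1=1$ and write $\lambda:=\lambda_0$. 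Then base points are exactly the $(m,[\lambda:1])\in\hat Z$ with $\boldsymbol{\sigma}(\mathbf{m})=-\lambda\cdot\mathbf{m}$, that is, $m$ is a fixed point of $\sigma$ in $\mathbb{P}^3$ (or a base point of $\sigma$) with the prescribed eigenvalue ratio, AND $\alpha(\mathbf m)\lambda^2+\beta(\mathbf m)\lambda+\gamma(\mathbf m)=0$.

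Next I would handle the subcase where $m\in\base(\sigma)$, i.e. $\boldsymbol{\sigma}(\mathbf m)=0$. By Proposition~\ref{baseR} these are the points of $V(F,F_x,F_y,F_z)$ together with $V(F,Q(\nabla F),\Delta_{\mathbf S}F)$; when $\boldsymbol{\sigma}(\mathbf m)=0$ the relation $\lambda_0\mathbf m+\lambda_1\boldsymbol\sigma(\mathbf m)=0$ forces $\lambda_0=0$ (since $\mathbf m\ne 0$), and one checks $\alpha\lambda_0^2+\beta\lambda_0\lambda_1+\gamma\lambda_1^2=\gamma(\mathbf m)\lambda_1^2$, which vanishes automatically because $\gamma=-\tfrac{4\Delta_{\mathbf S}F}{(d-1)^2}N_{\mathbf S}H_F$ is divisible by $\Delta_{\mathbf S}F$ and by $H_F$: on $V(F,F_x,F_y,F_z)$ (so $\mathcal T_m\mathcal Z\supseteq$ the direction making $H_F$ vanish) one gets $\lambda_0=0$ and the quadratic holds — this is how items (1), (2) (the cyclic sub-locus) and (3) ($\mathcal T_m\mathcal Z=\mathcal H^\infty$ with $H_F=0$) arise. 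I would verify that the decomposition of $\{t=F_x=F_y=F_z=0\}$ according to whether $Q(\mathbf m)=x^2+y^2+z^2$ vanishes or $H_F$ vanishes accounts precisely for (2) and (3), using that on such points $\gamma$ still needs $\gamma(\mathbf m)=0$, and $\gamma$ vanishes iff $N_{\mathbf S}H_F=0$ there — and $N_{\mathbf S}$ at a point at infinity equals $t_0^2 Q(\mathbf m)$ up to the relevant factor, giving the dichotomy.

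Then the heart is the subcase $\boldsymbol\sigma(\mathbf m)\ne 0$ but $\boldsymbol\sigma(\mathbf m)\parallel\mathbf m$, i.e. $m\in\mathcal M_{S,\mathcal Z}\setminus\base(\sigma)$. By Proposition~\ref{B1} these $m$ are either $S$ itself (when $S\in\mathcal Z$) or lie in $\mathcal W$, the set of $m\in\mathcal Z$ with $m=n_{\infty,m}(\mathcal Z)$, $\Delta_{\mathbf S}F(m)\ne0$, $Q(m)=0$ — i.e. cyclic points where the normal at infinity is $m$ itself. For $m=S$: from $\boldsymbol\sigma=Q(\nabla F)\mathbf S$ on the locus $\Delta_{\mathbf S}F=0$ I read off the forced ratio $[\lambda_0:\lambda_1]$ with $\lambda_0\mathbf m+\lambda_1 Q(\nabla F)\mathbf S=0$, and check $Q_{\mathbf S,F}(\mathbf m,\lambda_0,\lambda_1)=0$ — here $\alpha=\Delta_{\mathbf S}F=0$, and $\gamma$ also carries a factor $\Delta_{\mathbf S}F$, so only the $\beta\lambda_0\lambda_1$ term survives and one must see it vanishes (this uses $\hess F(\mathbf S,\boldsymbol\sigma)$ with $\boldsymbol\sigma\parallel\mathbf S$ and the Euler relations to reduce $\hess F(\mathbf S,\mathbf S)$), giving item~(4). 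For $m\in\mathcal W$: here $\boldsymbol\sigma=-2\Delta_{\mathbf S}F\cdot\boldsymbol\kappa(\nabla F)$ and $\mathbf m\parallel\boldsymbol\kappa(\nabla F)$; writing $(F_{xx}+F_{yy}+F_{zz})\cdot\mathbf m=(2d-1)\boldsymbol\kappa(\nabla F)$ as the normalisation that pins down the proportionality constant (this identity itself must be derived — it is the genuinely computational point, coming from applying the Euler identity to the second derivatives and using $Q(\mathbf m)=0$, $t=0$, $F_x x+F_y y+F_z z=0$ to express $F_{xx}+F_{yy}+F_{zz}$ in terms of the direction $\boldsymbol\kappa(\nabla F)$ versus $\mathbf m$), I then substitute into $Q_{\mathbf S,F}=0$: with $\lambda_0\mathbf m+\lambda_1\boldsymbol\sigma(\mathbf m)=0$ this fixes $[\lambda_0:\lambda_1]=[2\Delta_{\mathbf S}F(F_{xx}+F_{yy}+F_{zz}):2d-1]$, and I must check the quadratic $\alpha\lambda_0^2+\beta\lambda_0\lambda_1+\gamma\lambda_1^2$ vanishes on $\mathcal C_\infty$ — using $\gamma=-\tfrac{4\Delta_{\mathbf S}F}{(d-1)^2}N_{\mathbf S}H_F$ together with $H_F=(d-1)^2h_F/t^2$ breaking down at $t=0$, so one rather uses the $h_F$ form or the ambient $H_F$ directly — this is where I expect the main obstacle, since it requires a careful manipulation of $\beta$ via $\hess F(\mathbf S,\boldsymbol\sigma)+(\Delta_{\mathbf S}F)^2(F_{xx}+F_{yy}+F_{zz})$ restricted to cyclic points, and showing that, modulo the relation defining $\mathcal C_\infty\cap\mathcal Z$, $\beta\lambda_0\lambda_1$ exactly cancels $\gamma\lambda_1^2+\alpha\lambda_0^2$.

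Finally I would assemble: the five listed families are shown to be contained in $B_{\Phi_{|\hat Z}}$ by the direct verifications above (checking both $\boldsymbol\Phi=0$ and the membership $(m,[\lambda_0:\lambda_1])\in\hat Z$, i.e. $F(\mathbf m)=0$ and $Q_{\mathbf S,F}=0$), and conversely any base point, having $\mathbf m\parallel\boldsymbol\sigma(\mathbf m)$, lies in $\mathcal M_{S,\mathcal Z}\cap\mathcal Z$, hence by Propositions~\ref{B1} and~\ref{baseR} falls into one of: $\base(\sigma)\cap\mathcal Z$ — split by $\{t=F_x=F_y=F_z=0\}$ vs. $V(F,Q(\nabla F),\Delta_{\mathbf S}F)$ and within the former by $Q(\mathbf m)=0$ vs. $H_F=0$ (items (1)–(3)); the point $S$ (item (4)); or $\mathcal W\subseteq\mathcal C_\infty$ (item (5)), in each case the constraint $Q_{\mathbf S,F}=0$ together with $\boldsymbol\Phi=0$ forcing the stated value of $[\lambda_0:\lambda_1]$. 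The routine computations I would relegate to Euler-identity bookkeeping already exhibited in the proof of Theorem~\ref{THM}.
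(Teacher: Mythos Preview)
Your overall architecture is the same as the paper's: reduce $\boldsymbol\Phi=0$ to $m\in\mathcal Z\cap\mathcal M_{S,\mathcal Z}$, then use Propositions~\ref{baseR} and~\ref{B1} to split into $\base(\sigma)$, $\{S\}$, and $\mathcal W$, and in each branch determine when the forced $[\lambda_0:\lambda_1]$ also satisfies $Q_{\mathbf S,F}=0$. Cases (1)--(4) in your outline are fine and match the paper.

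There is a genuine gap in your treatment of case~(5). You describe the identity
\[
(F_{xx}+F_{yy}+F_{zz})\cdot\mathbf m=(2d-1)\,\boldsymbol\kappa(\nabla F)
\]
as something that ``must be derived'' from the Euler relations together with $Q(\mathbf m)=0$, $t=0$, $xF_x+yF_y+zF_z=0$. It cannot be: this identity is \emph{not} automatic for $m\in\mathcal W$, it is an extra constraint. The logic in the paper runs the other way. For $m\in\mathcal W$ one has $\boldsymbol\sigma=-2\Delta_{\mathbf S}F\,\boldsymbol\kappa(\nabla F)$ and $\mathbf m\parallel\boldsymbol\kappa(\nabla F)$, so $\boldsymbol\Phi=0$ forces a unique $[\lambda_0:\lambda_1]$ with $\lambda_0,\lambda_1\ne 0$. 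Separately, since $t=0$ and $Q(\mathbf m)=0$ give $N_{\mathbf S}(\mathbf m)=0$ and hence $\gamma(\mathbf m)=0$, one substitutes $\lambda_1\boldsymbol\sigma=-\lambda_0\mathbf m$ into $\beta$ and uses Euler ($\hess_F(\mathbf m,\mathbf S)=(d-1)\Delta_{\mathbf S}F$) to obtain
\[
Q_{\mathbf S,F}(\mathbf m,\lambda_0,\lambda_1)=(2d-1)\Delta_{\mathbf S}F\,\lambda_0^2-2(\Delta_{\mathbf S}F)^2(F_{xx}+F_{yy}+F_{zz})\,\lambda_0\lambda_1.
\]
Thus $Q_{\mathbf S,F}=0$ holds iff $\lambda_0/\lambda_1=2\Delta_{\mathbf S}F(F_{xx}+F_{yy}+F_{zz})/(2d-1)$. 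Comparing this with the ratio forced by $\boldsymbol\Phi=0$ yields the displayed identity as a \emph{compatibility condition}; points of $\mathcal W$ failing it do not give base points. Your plan to ``derive'' it for every $m\in\mathcal W$ would fail, and your expectation that one must show ``$\beta\lambda_0\lambda_1$ exactly cancels $\gamma\lambda_1^2+\alpha\lambda_0^2$'' on all of $\mathcal C_\infty$ is misguided for the same reason.

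A second, minor point: your worry about $H_F=(d-1)^2h_F/t^2$ breaking down at $t=0$ is unnecessary. On $\mathcal C_\infty$ you get $\gamma=0$ directly from $N_{\mathbf S}(\mathbf m)=t_0^2(x^2+y^2+z^2)=0$, with no need to analyse $H_F$ there.
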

\begin{proof}
Let us prove that any base point $(m,[\lambda_0:\lambda_1])$ has one of the form 
announced in the statement of the proposition (the converse being direct).
Let $(m,[\lambda_0:\lambda_1])\in B_{\Phi_{|\hat Z}}$. By definition of $\Phi$, 
we have $0=\lambda_0\cdot \mathbf{m}+\lambda_1\cdot \boldsymbol{\sigma}(\mathbf{m})$.
So $m\in\mathcal Z\cap\mathcal M_{S,\mathcal Z}$ and these $m$ have been determined in
Proposition \ref{B1}.
\begin{itemize}
\item Assume first that 
$\boldsymbol{\sigma}(\mathbf{m})=0$. Then 
the unique $[\lambda_0:\lambda_1]\in\mathbb P^1$ satisfying $\lambda_0\cdot\mathbf{m}
     +\lambda_1\cdot\boldsymbol{\sigma}(\mathbf{m})=0$ is $[\lambda_0:\lambda_1]=[0:1]$.
Hence $\lambda_0=0$, $\lambda_1\ne 0$ and so
$Q_{{\mathbf{S}},F}(\mathbf{m},\lambda_0,\lambda_1)=\gamma(\mathbf{m})\lambda_1^2. $

If $\Delta_{\mathbf{S}}F(\mathbf{m})=0$ and $Q(\nabla F(\mathbf{m}))=0$, then $\gamma(\mathbf{m})=0$.

Otherwise, according to Proposition \ref{baseR}, we have $F_x=F_y=F_z=0$
and so $t=0$. Now, we have $\gamma(\mathbf{m})=0$ if and only if $(x^2+y^2+z^2)t_0^2H_F=0$.
\item Assume now that $m=S$ and $\boldsymbol{\sigma}(\mathbf{m})\ne 0$.
Then $\Delta_{\mathbf{S}}F(\mathbf{m})=0$ and so
$\boldsymbol{\sigma}(\mathbf{m})=Q(\nabla F(\mathbf{m}))\cdot\mathbf{S}$. We consider the unique
$[\lambda_0,\lambda_1]$ such that $\lambda_0\cdot\mathbf{m}
     +\lambda_1\cdot\boldsymbol{\sigma}(\mathbf{m})=0$. Observe that $\lambda_0\ne 0$
and that  $\lambda_1\ne 0$. Since $\Delta_{\mathbf{S}}F(\mathbf{m})=0$, we have
$Q_{\mathbf{S},F}(\mathbf{m},\lambda_0,\lambda_1)= \beta(\mathbf{m})\lambda_0\lambda_1.$
But $\beta(\mathbf{m})=-2\hess_{F,\mathbf{m}}
(\mathbf{S},
     \boldsymbol{\sigma}(\mathbf{m}))=0$ due to $m=S=\sigma(m)$. 
\item Assume finally that $m\in\mathcal W$.
We have $\Delta_{\mathbf{S}}F(\mathbf{m})\ne 0$, $x^2+y^2+z^2=0$, $m=[F_x(\mathbf{m}):F_y(\mathbf{m}):F_z(\mathbf{m}):0]$, $t=0$ and $\boldsymbol{\sigma}(\mathbf{m})=
-2\Delta_{\mathbf {S}}F(\mathbf{m})\boldsymbol{\kappa}(\nabla F(\mathbf{m}))$. 
Since $t=0$, it follows that
$N_{\mathbf{S}}(\mathbf{m})=(x^2+y^2+z^2)t_0^2=0$ and so that $\gamma(\mathbf{m})=0$.
Let 
$[\lambda_0,\lambda_1]\in\mathbb P^1$ be such that $\lambda_0\cdot\mathbf{m}
     +\lambda_1\cdot\boldsymbol{\sigma}(\mathbf{m})=0$.
We observe that $\lambda_1\ne 0$ and $\lambda_0\ne 0$.
We have
$\alpha(\mathbf{m})\lambda_0^2=\Delta_{\mathbf{S}}F(\mathbf{m})\lambda_0^2 $
and
$$ -2\cdot\hess_{F,\mathbf m}(\mathbf{S},
     \boldsymbol{\sigma}(\mathbf{m}))\lambda_0\lambda_1=2\hess_{F,\mathbf{m}}(\mathbf{S},
     \mathbf{m})\lambda_0^2=2(d-1)\Delta_{\mathbf{S}}F(\mathbf{m}),$$
since $(d-1)F_w=xF_{xw}+yF_{yw}+zF_{zw}+tF_{tw}$ for every $w\in\{x,y,z\}$.
Hence we have
$$ Q_{\mathbf{S},F}(\mathbf{m},\lambda_0,\lambda_1)=(2d-1)
\Delta_{\mathbf{S}}F(\mathbf{m})\lambda_0^2-2(\Delta_{\mathbf{S}}F(\mathbf{m}))^2
    (F_{xx}(\mathbf{m})+F_{yy}(\mathbf{m})+F_{zz}(\mathbf{m}))\lambda_0\lambda_1.$$
Hence $Q_{\mathbf{S},F}(\mathbf{m},\lambda_0,\lambda_1)=0$ if and
only if $\lambda_0/\lambda_1=2\Delta_{\mathbf{S}}F(\mathbf{m})(F_{xx}(\mathbf{m})+F_{yy}(\mathbf{m})+F_{zz}(\mathbf{m}))/(2d-1)$. We conclude by using $\lambda_0\cdot\mathbf{m}+\lambda_1\cdot\boldsymbol{\sigma}(\mathbf{m})=0$ and the formula obtained for $\boldsymbol{\sigma}(
\mathbf{m})$.
\end{itemize}
\end{proof}
\begin{coro}\label{B}
A point $m$ is in $\mathcal B$ if and only if it satisfies one of the following conditions:
\begin{enumerate}
\item $m\in \mathcal B_0=V(F,\Delta_{\mathbf{S}}F,Q(\nabla F))$,
i.e. $m$ is a singular point of $\mathcal Z$ or $m$
is a point of tangency of $\mathcal Z$ with an isotropic plane containing $S$ (see also \refeq{equiv}),
\item $m$ is a point of tangency of $\mathcal Z$ with $\mathcal H^\infty$ and $m$ lies on the umbilical curve $\mathcal C_\infty$,
\item $m$ is a point of tangency of $\mathcal Z$ with $\mathcal H^\infty$ and $m$ lies in the hessian surface of $\mathcal Z$,
\item $m=S\in\mathcal Z$,
\item $m$ lies on $\mathcal C_\infty$ and
$(F_{xx}+F_{yy}+F_{zz})\cdot\mathbf{m}=(2d-1)\boldsymbol{\kappa}(\nabla F)$.
\end{enumerate}
This can be summarized in the following formula
$$\mathcal B=\mathcal Z\cap[V(\Delta_{\mathbf S}F,Q(\nabla F))
\cup\{S\}\cup V(H_F\cdot Q,\boldsymbol{\kappa}(\nabla F))_\infty
   \cup \mathcal G_\infty], $$
with $\mathcal G_\infty=\{m\in\mathcal C_\infty\ :\ 
(F_{xx}+F_{yy}+F_{zz})\cdot\mathbf{m}=(2d-1)\boldsymbol{\kappa}(\nabla F)\} $.
\end{coro}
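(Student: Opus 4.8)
The plan is to read the Corollary off the preceding Proposition, which already lists the base points of $\Phi_{|\hat Z}$ as the pairs $(m,[\lambda_0:\lambda_1])\in\hat Z$ satisfying one of its five conditions (1)--(5). By Notation~\ref{ensB}, $\mathcal B=\pi_1(B_{\Phi_{|\hat Z}})$, so a point $m$ lies in $\mathcal B$ if and only if $(m,[\lambda_0:\lambda_1])$ is a base point of $\Phi_{|\hat Z}$ for some $[\lambda_0:\lambda_1]\in\mathbb P^1$; hence it suffices, for each of the five cases of the Proposition, to discard the information on $[\lambda_0:\lambda_1]$ and retain only the resulting constraint on $m$, and then to check that these five constraints are exactly the five classes in the statement. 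Since $B_{\Phi_{|\hat Z}}\subseteq\hat Z$ forces $\mathcal B\subseteq\mathcal Z=V(F)$, everything will be intersected with $\mathcal Z$, which accounts for the shape of the final formula.

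I would go through the five cases in turn. Case (1) projects to $m\in V(F,\Delta_{\mathbf{S}}F,Q(\nabla F))=\mathcal B_0$, whose geometric content (a singular point of $\mathcal Z$, or a point of tangency of $\mathcal Z$ with an isotropic plane through $S$) is recorded in \refeq{equiv}; this is item~(1). Cases (2) and (3) both carry the condition $t=F_x=F_y=F_z=0$, which---using the Euler identity on $\mathcal Z$, together with the observation that the degenerate possibility $F_t(\mathbf{m})=0$ corresponds to a singular point of $\mathcal Z$ and is therefore already absorbed in $\mathcal B_0$---is equivalent to $\mathcal T_m\mathcal Z=\mathcal H^\infty$, i.e. $m$ is a point of tangency of $\mathcal Z$ with the plane at infinity. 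The extra constraint is $Q(\mathbf{m})=x^2+y^2+z^2=0$ in case (2), meaning $m\in\mathcal C_\infty$ (item~(2)), and $H_F(\mathbf{m})=0$ in case (3), meaning $m$ lies in the Hessian surface of $\mathcal Z$ (item~(3)). Case (4) is just $m=S\in\mathcal Z$ (item~(4)). Case (5) gives $m\in\mathcal C_\infty$ together with $(F_{xx}+F_{yy}+F_{zz})\cdot\mathbf{m}=(2d-1)\boldsymbol{\kappa}(\nabla F)$, i.e. $m\in\mathcal G_\infty$ (item~(5)).

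Finally I would assemble the summary formula. Case (1) intersected with $\mathcal Z$ contributes $\mathcal Z\cap V(\Delta_{\mathbf{S}}F,Q(\nabla F))$; case (4) contributes $\mathcal Z\cap\{S\}$; case (5) contributes $\mathcal Z\cap\mathcal G_\infty$ (and $\mathcal G_\infty\subseteq\mathcal C_\infty$). The union of cases (2) and (3) is the locus of $m$ on $\mathcal H^\infty=V(t)$ with $F_x=F_y=F_z=0$ and $H_F(\mathbf{m})\cdot Q(\mathbf{m})=0$; since $V(\boldsymbol{\kappa}(\nabla F))=V(F_x,F_y,F_z)$ and $H_F\cdot Q$ vanishes exactly where $H_F$ or $Q$ does, this locus is $V(H_F\cdot Q,\boldsymbol{\kappa}(\nabla F))_\infty$. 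Taking the union of these four pieces and intersecting with $\mathcal Z$ yields the displayed expression for $\mathcal B$.

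All the genuine work has been done in the Proposition, so the Corollary is really a matter of careful translation and bookkeeping rather than of overcoming an obstacle. The one step that deserves a moment's attention is the identification "$t=F_x=F_y=F_z=0$" $\Longleftrightarrow$ "$m$ is a point of tangency of $\mathcal Z$ with $\mathcal H^\infty$", which rests on the Euler identity and on noting that the case $F_t(\mathbf{m})=0$ (a singular point) is harmless because singular points of $\mathcal Z$ automatically satisfy $\nabla F(\mathbf{m})=0$, hence $\Delta_{\mathbf{S}}F(\mathbf{m})=Q(\nabla F)(\mathbf{m})=0$, and so already belong to $\mathcal B_0$; this keeps the five-class decomposition consistent enough for the stated formula.
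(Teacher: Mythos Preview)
Your proposal is correct and follows exactly the intended route: the paper states the Corollary immediately after the Proposition without a separate proof, so the Corollary is obtained by projecting each of the five cases of the Proposition via $\pi_1$ and retaining only the condition on $m$. Your handling of the one nontrivial identification (``$t=F_x=F_y=F_z=0$'' $\Leftrightarrow$ ``$\mathcal T_m\mathcal Z=\mathcal H^\infty$'' via Euler, with the singular case absorbed into $\mathcal B_0$) and your assembly of the summary formula are both accurate; the only minor point you leave implicit is that the side condition $[\lambda_0:\lambda_1]\ne[0:1]$ in case~(5) of the Proposition can be dropped in the projection because the excluded points (where $\Delta_{\mathbf S}F=0$ or $F_{xx}+F_{yy}+F_{zz}=0$) already fall into items~(1) or~(2), but this does not affect the validity of your argument.
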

\begin{rqe}\label{Bgene}
The set  $\mathcal B$ is never empty. Except (iv), the forms of the base points
are very similar to the base points of the caustic map of planar curves (see \cite{fredsoaz1}).

For a general $(\mathcal Z,S)$, the set $\mathcal B$ consists of the
points at which $\mathcal Z$ admits an isotropic tangent plane containing $S$, i.e. $\mathcal B=\mathcal B_0=V(F,\Delta_{\mathbf S}F,Q(\nabla F))$, and in general $\mathcal Z$ has no singular 
point and $\mathcal B_0\cap\mathcal H^\infty=\emptyset$.
In this case $\mathcal B$ is fully interpreted by
\refeq{equiv}.
\end{rqe}
Let us study the base points when $\mathcal Z$ is
the paraboloid $V(x^2+y^2-2zt)$ (see Example
\ref{exaparaboloid}).
\begin{prop}[Paraboloid]\label{baseparaboloide}
Let $\mathcal Z=V(x^2+y^2-2zt)$
and any $S\in\mathbb P^3\setminus\{F_1,F_2\}$.
Then $\mathcal B=V(F,\Delta_{\mathbf S}P,Q(\nabla F))\cup(\{S\}\cap\mathcal Z)$ and its points
are the following ones:
\begin{enumerate}
\item the point $S$ if $S$ is in $\mathcal Z$,
\item the points $[1:\pm i:0:0]$ if $x_0=y_0=0$,
\item the point $[t_0:it_0:x_0+iy_0:0]$ and $[t_0:-it_0:x_0-iy_0:0]$ if $t_0\ne 0$,
\item the point $F_1[0:0:1:0]$ if $t_0=0$ and $x_0^2+y_0^2\ne 0$,
\item the points of the form $[ux_0:uy_0:z:0]$ (with $[u:z]\in\mathbb P^1$)
if $x_0^2+y_0^2=0$, $t_0=0$,
\item the points $m_{1}$ and $m_{-1}$ if $x_0^2+y_0^2\ne 0$, with
$$m_\varepsilon:=[x_\varepsilon:y_{\varepsilon}:-\frac 12:1] ,$$
$$x_{\varepsilon}=\frac{x_0(z_0-\frac {t_0}2)+i\varepsilon y_0
\sqrt{x_0^2+y_0^2+(z_0-\frac {t_0}2)^2}} {x_0^2+y_0^2} $$
and
$$y_{\varepsilon}=\frac{y_0(z_0-\frac {t_0}2)-i\varepsilon x_0
\sqrt{x_0^2+y_0^2+(z_0-\frac {t_0}2)^2}} {x_0^2+y_0^2}.$$
\item the point $\left[\frac{z_0-\frac {t_0}2}{2x_0}:\frac{z_0-\frac {t_0}2}{2y_0}:-\frac 12:1\right]$
if $x_0^2+y_0^2=0$, $x_0\ne 0$ and 
$z_0-\frac{t_0}2\ne 0$.
\end{enumerate}
\end{prop}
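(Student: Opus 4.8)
The strategy is to specialize the description of $\mathcal B$ obtained in Corollary \ref{B} to $F=x^2+y^2-2zt$ and then to solve the resulting equations explicitly, organizing the discussion by the position of $S$. First I would record the invariants of the paraboloid: here $d=2$, $\nabla F=(2x,2y,-2t,-2z)$, so $\boldsymbol{\kappa}(\nabla F)=(2x,2y,-2t,0)$ and $Q(\nabla F)=4(x^2+y^2+t^2)$, while $\Delta_{\mathbf S}F=2(x_0x+y_0y-z_0t-t_0z)$; moreover the Hessian form of $F$ is constant, with $H_F=-16\neq 0$ and $F_{xx}+F_{yy}+F_{zz}=4$.

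Plugging these into the formula
$$\mathcal B=\mathcal Z\cap\bigl[V(\Delta_{\mathbf S}F,Q(\nabla F))\cup\{S\}\cup V(H_F\cdot Q,\boldsymbol{\kappa}(\nabla F))_\infty\cup\mathcal G_\infty\bigr]$$
of Corollary \ref{B}, I would first show that the last two pieces are empty: $\boldsymbol{\kappa}(\nabla F)$ vanishes only at $[0:0:1:0]$, where $H_F\cdot Q=-16\neq 0$, so $V(H_F\cdot Q,\boldsymbol{\kappa}(\nabla F))_\infty=\emptyset$; and the condition defining $\mathcal G_\infty$, namely $4\,\mathbf m=(2d-1)\,\boldsymbol{\kappa}(\nabla F)=(6x,6y,-6t,0)$, forces $\mathbf m=0$, so $\mathcal G_\infty=\emptyset$. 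This already gives the stated identity $\mathcal B=V(F,\Delta_{\mathbf S}F,Q(\nabla F))\cup(\{S\}\cap\mathcal Z)$, of which item (i) is the part $\{S\}\cap\mathcal Z$ (coming from the point $m=S$ in Proposition \ref{B1}).

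It remains to describe $V(F,\Delta_{\mathbf S}F,Q(\nabla F))$. Subtracting $F=0$ from $Q(\nabla F)=0$ yields $t(t+2z)=0$, and I would split accordingly. On the branch $t=0$ both $F=0$ and $Q(\nabla F)=0$ reduce to $x^2+y^2=0$, i.e. $y=\varepsilon' i x$ with $\varepsilon'\in\{\pm1\}$, and $\Delta_{\mathbf S}F=0$ reads $(x_0+\varepsilon' i y_0)\,x=t_0 z$; solving this, distinguishing $t_0\neq0$ from $t_0=0$ and, when $t_0=0$, whether the coefficient $x_0+\varepsilon' i y_0$ vanishes (which happens exactly when $x_0^2+y_0^2=0$), produces items (ii)--(v). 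On the branch $t=-2z$ (with $z\neq 0$, the case $z=0$ being already covered) I would normalize $z=-\tfrac12,\ t=1$, so that the system becomes the plane conic $x^2+y^2=-1$ cut by the line $x_0x+y_0y=z_0-\tfrac{t_0}{2}=:c$: when $x_0^2+y_0^2\neq0$ an orthogonal-decomposition computation (legitimate over $\mathbb C$ since $x_0^2+y_0^2\neq0$) gives the two points $m_{\pm1}$ of item (vi), and when $x_0^2+y_0^2=0$ the cutting line passes through one of the two points of the conic at infinity, hence meets the conic in at most one affine point -- exactly one when also $c\neq0$, giving item (vii), and none when $x_0=y_0=0$ (the subcase $c=0$ there forcing $S=F_2$, which is excluded).

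The step I expect to be the main obstacle is the stratum $x_0^2+y_0^2=0$ (items (v) and (vii)): there $V(\Delta_{\mathbf S}F)$ becomes tangent to the umbilical conic $\mathcal C_\infty$, intersection multiplicities jump, and one of the two points of the generic case (vi) escapes to infinity, so one must carefully isolate the surviving point, put it in closed form, and check that the resulting list of cases exhausts every admissible position of $S$ (taking into account the overlaps, for instance that (ii) and (iii) give the same points when $x_0=y_0=0$ and $t_0\neq 0$). A direct substitution into $F$ moreover indicates that in item (vii) the first two coordinates should in fact read $\tfrac{c}{2x_0}-\tfrac{x_0}{2c}$ and $\tfrac{c}{2y_0}-\tfrac{y_0}{2c}$ for the point to lie on $\mathcal Z$.
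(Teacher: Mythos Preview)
Your argument is correct and follows essentially the same route as the paper's proof: you specialise Corollary~\ref{B}, eliminate the pieces $V(H_F\cdot Q,\boldsymbol{\kappa}(\nabla F))_\infty$ and $\mathcal G_\infty$, then solve $V(F,\Delta_{\mathbf S}F,Q(\nabla F))$ by the same dichotomy $t(t+2z)=0$ and the same case split on $(t_0,x_0^2+y_0^2)$. Your closing observation is also right: in item~(vii) the paper drops the term $y_0^2$ when solving $-2x_0cx+c^2+y_0^2=0$, and the correct point is $\bigl[\tfrac{c}{2x_0}-\tfrac{x_0}{2c}:\tfrac{c}{2y_0}-\tfrac{y_0}{2c}:-\tfrac12:1\bigr]$ with $c=z_0-\tfrac{t_0}{2}$.
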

\begin{proof}
We have $\alpha=\Delta_{\mathbf S}F=xx_0+yy_0-tz_0-zt_0$, 
$$\tilde\beta=(x^2+y^2+t^2)(x_0^2+y_0^2-2t_0z_0)-2
     (tz_0+zt_0+t_0t)\Delta_{\mathbf S}F $$
and
$\gamma=
4((x_0t-xt_0)^2+(y_0t-yt_0)^2+(z_0t-zt_0)^2)\Delta_{\mathbf S}F$. 
First we observe that there is no point of $\mathcal Z$ satisfying (5) of Corollary \ref{B}. Assume that $m$ is such a point. 
We have $t=0$, so $x^2+y^2=0$ and
$z=0$ (since $x^2+y^2+ z^2=0$). Now, using $(F_{xx}+F_{yy}+F_{zz})\mathbf{m}=(2d-1)\boldsymbol{\kappa}(\nabla F)$, we obtain $2x=3x$ and $2y=3y$, which
implies $x=y=0$ which contradicts $z=t=0$.

Now we prove that $\mathcal Z\cap V(H_F\cdot Q,\boldsymbol{\kappa}
(\nabla F))\cap\mathcal H^\infty=\emptyset$.
Assume that $m\in\mathcal Z\cap V(H_F\cdot Q,\boldsymbol{\kappa}
(\nabla F))\cap\mathcal H^\infty$. Due to $F_x(\mathbf{m})=F_y(\mathbf{m})
=F_z(\mathbf{m})=0$, we obtain $x=y=t=0$. Since $H_F=-1$, we have
$0=x^2+y^2+z^2$ and so $z=0$.

It remains to identify the points of 
$V(F,\Delta_SF,Q(\nabla F))$. Let $m[x:y:z:t]$ be a point of this set.
Then $x^2+y^2=2zt$, $0=x_0x+y_0y-z_0t-t_0z$,
$0=F_x^2+F_y^2+F_z^2=x^2+y^2+t^2$. So $-t^2=x^2+y^2=2zt$.

If $x_0=y_0=0$, since $t_0\ne 0$, we obtain $x^2+y^2+t^2=0=x^2+y^2-2zt$
and $z=-z_0t/t_0$. If $t=0$, we obtain $x^2+y^2=0$ and $z=0$. This gives (2).
If $t=1$, from $t^2=-2zt$, it comes $z=-1/2$ and so $t_0=2z_0$,
which contradicts $S\ne F_2$.

From now on, we assume that $(x_0,y_0)\ne 0$.

Assume first that $t=0$. Then we have $x^2+y^2=0$ and $0=\Delta_{\mathbf{S}}F
=x_0x+y_0y-t_0z$. Let $\varepsilon\in\{\pm 1\}$ such that $y=i\varepsilon x$.
We have $0=x_0x+i\varepsilon y_0x-t_0z$.
If $t_0=0$, this becomes $0=x(x_0+i\varepsilon y_0)$ and so either $x=0$
or $x_0+i\varepsilon y_0=0$. This gives (4) and (5).
If $t_0\ne 0$ and if $x_0+i\varepsilon y_0=0$, we obtain $z=0$ and $y=i\varepsilon x$, so $m=|1:i\varepsilon:0:0]$.
If $t_0\ne 0$ and if $x_0+i\varepsilon y_0\ne 0$, we obtain $x=t_0z/(x_0+i\varepsilon y_0)$, so $m=[t_0:i\varepsilon t_0:x_0+i\varepsilon y_0:0]$. This gives (3).

Assume now that $t=1$. We have $-1=x^2+y^2=2z$ and so $z=-1/2$.
Since $x^2+y^2=-1$, we consider
$\varepsilon\in\{\pm 1\}$ be such that $y=\varepsilon i\sqrt{1+x^2}$.
We have
$$0=x_0x+y_0y-z_0t-t_0z=x_0x+y_0\varepsilon i\sqrt{1+x^2}-z_0+\frac{t_0}2$$
and so $-y_0\varepsilon i\sqrt{1+x^2}=x_0x-z_0+\frac{t_0}2 $
and we obtain
$-y_0^2(1+x^2)=(x_0x-z_0+\frac{t_0}2)^2$
and so
$$0=(x_0^2+y_0^2)x^2-2xx_0\left(z_0-\frac{t_0}2\right)+\left(z_0-\frac{t_0}2\right)^2+y_0^2 .$$
This gives $m\in\{m_1,m_{-1}\}$ if $x_0^2+y_0^2\ne 0$, with 
$m_\varepsilon:=[x_\varepsilon:y_{\varepsilon}:-\frac 12:1]$,
$$x_{\varepsilon}=\frac{x_0(z_0-\frac {t_0}2)+
\varepsilon\sqrt{(x_0(z_0-\frac {t_0}2))^2-(x_0^2+y_0^2)(y_0^2+(z_0-\frac {t_0}2)^2)}}{x_0^2+y_0^2} $$
and
$$y_{\varepsilon}=\frac{y_0(z_0-\frac {t_0}2)-
\varepsilon\sqrt{(y_0(z_0-\frac {t_0}2))^2-(x_0^2+y_0^2)(x_0^2+(z_0-\frac {t_0}2)^2)}}{x_0^2+y_0^2},$$
and so (6).
Now, we assume moreover that $x_0^2+y_0^2=0$, we obtain
$$0=-2xx_0\left(z_0-\frac{t_0}2\right)+\left(z_0-\frac{t_0}2\right)^2+y_0^2 .$$
If $x_0^2+y_0^2=0$ and $z_0-\frac{t_0}2\ne 0$, then we obtain
$x=\frac{z_0-\frac {t_0}2}{2x_0}$ and $y=\frac{z_0-\frac {t_0}2}{2y_0}$,
since $x_0\ne 0$ and $y_0\ne 0$. This gives (7).
If $x_0^2+y_0^2=0$ and $z_0-\frac{t_0}2=0$, we obtain $y_0^2=0$ and so 
$S=F_2$.
\end{proof}
\begin{rqe} Let $\mathcal Z=V((x^2+y^2-2zt)/2)$ and $S\in\mathbb P^3\setminus\{F_1,F_2\}$.
Observe that $\#\mathcal B<\infty$
except if $S\in\mathcal Z_\infty$.
\end{rqe}
In the particular case where $S\in\mathcal Z_\infty$, we have the following.
\begin{prop}\label{cas1}
Let $\mathcal Z=V((x^2+y^2-2zt)/2)$ and 
$S=[1:\varepsilon i:z_0:0]$ with $\varepsilon\in\{\pm 1\}$, 
then $\Sigma_S(\mathcal Z)$ is the curve of equations $ z_0i\varepsilon x+z_0y+i\varepsilon t,
    (z-t/2)^2+x^2+y^2$.
\end{prop}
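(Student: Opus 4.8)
The plan is to exploit the very special position of $S$: for $S=[1:i\varepsilon:z_0:0]$ one has $x_0^{2}+y_0^{2}=1+(i\varepsilon)^{2}=0$ and $t_0=0$, and it will turn out that the discriminant $\vartheta=\tilde\beta^{2}-\alpha\gamma$ of the quadratic form $Q_{\mathbf S,F}$ vanishes identically on $\mathcal Z$. The two sheets $\psi^{\pm}$ of the caustic map then coincide, so by the remark following Definition~\ref{deficaustique} one has $\Sigma_S(\mathcal Z)=\overline{\psi^{+}(\mathcal Z)}$, the Zariski closure of the image of $\mathcal Z$ under a single rational map, which after cancelling a common factor will be a rational parametrisation of the conic in the statement.

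Concretely, I would first compute the coefficients for $F=\tfrac12(x^{2}+y^{2}-2zt)$ and $\mathbf S=(1,i\varepsilon,z_0,0)$. Since $d=2$ the Hessian of $F$ is constant, giving $F_{xx}+F_{yy}+F_{zz}=2$ and $H_F=-1$; also $Q(\nabla F)=x^{2}+y^{2}+t^{2}$, $\boldsymbol\kappa(\nabla F)=(x,y,-t,0)$, $\alpha=\Delta_{\mathbf S}F=x+i\varepsilon y-z_0t$ and $N_{\mathbf S}=t^{2}+(i\varepsilon t)^{2}+z_0^{2}t^{2}=z_0^{2}t^{2}$. Writing $P:=x+i\varepsilon y$, the computation of $\boldsymbol\sigma$ from \refeq{defisigma} produces the collapsing identity $\sigma^{(1)}+i\varepsilon\sigma^{(2)}=-2\alpha P$, hence $\hess_F(\mathbf S,\boldsymbol\sigma)=-2\alpha(\alpha+z_0t)$, so $\beta=4\alpha z_0t$ by \refeq{beta0} and $\gamma=4\alpha z_0^{2}t^{2}$ by \refeq{gamma0}. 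With $\tilde\beta=-\beta/2=-2\alpha z_0t$ this gives $\vartheta=\tilde\beta^{2}-\alpha\gamma=0$. By \refeq{solutions}, $\boldsymbol\psi^{+}(\mathbf m)=\tilde\beta(\mathbf m)\,\mathbf m+\alpha(\mathbf m)\,\boldsymbol\sigma(\mathbf m)=\alpha(\mathbf m)\bigl(-2z_0t\,\mathbf m+\boldsymbol\sigma(\mathbf m)\bigr)$, so after cancelling $\alpha$ the caustic is the closure of the image of $\mathcal Z$ under the map given by $\boldsymbol\psi(\mathbf m):=-2z_0t\,\mathbf m+\boldsymbol\sigma(\mathbf m)$. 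Substituting $x^{2}+y^{2}=2zt$ I would then simplify this, on $\mathcal Z$, to
$$\boldsymbol\psi(\mathbf m)=\bigl(t^{2}-P^{2},\ i\varepsilon(P^{2}+t^{2}),\ t(2P-z_0t),\ -2z_0t^{2}\bigr),$$
which depends only on $[P:t]\in\mathbb P^{1}$; since $[x+i\varepsilon y:t]$ takes every value of $\mathbb P^{1}$ on $\mathcal Z$ (e.g. at $[p:0:p^{2}/2:1]$ and at $[1:-i\varepsilon:0:0]$), $\Sigma_S(\mathcal Z)$ equals the image of the morphism $\rho:\mathbb P^{1}\to\mathbb P^{3}$, $[p:t]\mapsto[t^{2}-p^{2}:i\varepsilon(p^{2}+t^{2}):t(2p-z_0t):-2z_0t^{2}]$ (a morphism, since $t^{2}-p^{2}$ and $p^{2}+t^{2}$ are coprime).

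It then remains to identify $\rho(\mathbb P^{1})$ with $C:=V\bigl(z_0i\varepsilon x+z_0y+i\varepsilon t,\ (z-t/2)^{2}+x^{2}+y^{2}\bigr)$. The inclusion $\rho(\mathbb P^{1})\subseteq C$ is a direct substitution of the coordinates of $\rho(p,t)$ into the two defining polynomials (for the second one uses that the ``$z-t/2$'' of the image point equals $2pt$, and $(2pt)^{2}+(t^{2}-p^{2})^{2}-(p^{2}+t^{2})^{2}=0$). For the reverse inclusion I argue by degrees: $C$, being a plane section of a quadric, is a conic, and it is irreducible — it is the umbilical conic $\mathcal C_\infty$ when $z_0=0$, and when $z_0\ne0$ eliminating $y$ from the linear equation shows the restricted quadric is nondegenerate (nonzero discriminant, equal to $-1/z_0^{2}$ up to a nonzero factor), so $C$ is a smooth conic; on the other hand the four binary quadrics defining $\rho$ span a $3$-dimensional space, so $\rho(\mathbb P^{1})$ spans a plane and in particular is not a line, whence $\rho$ is birational onto its image and $\deg\rho(\mathbb P^{1})=2$. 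An irreducible curve of degree $2$ contained in an irreducible conic coincides with it, so $\Sigma_S(\mathcal Z)=\rho(\mathbb P^{1})=C$, as claimed.

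The only genuinely delicate point is the bookkeeping in the second step — especially the identity $\sigma^{(1)}+i\varepsilon\sigma^{(2)}=-2\alpha P$ and the resulting $\vartheta\equiv0$, which make the whole construction collapse; everything afterwards is routine. One should also keep in mind that for this $S$ the set $\hat Z$ literally carries the two lines of $\mathcal Z\cap\mathcal T_S\mathcal Z$ (which lie in $\mathcal B_0$) as spurious ``vertical'' components; passing through the rational maps $\psi^{\pm}$, for which these lines are base points, is the clean way to see that they do not belong to $\Sigma_S(\mathcal Z)$.
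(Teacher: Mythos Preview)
Your approach is essentially the same as the paper's: compute $\alpha,\tilde\beta,\gamma$, observe that the discriminant vanishes so that $Q_{\mathbf S,F}$ factors as $\alpha(\lambda_0+2z_0t\lambda_1)^2$, reduce to the single rational map $\boldsymbol\psi=-2z_0t\cdot\mathbf m+\boldsymbol\sigma$, and simplify on $\mathcal Z$ to a map depending only on $[x+i\varepsilon y:t]\in\mathbb P^1$. Your final identification of $\rho(\mathbb P^1)$ with $C$ via a degree/irreducibility argument is in fact more complete than the paper's sketch (which merely exhibits the inverse relations $2t^2=X-i\varepsilon Y$, $2(x+i\varepsilon y)^2=-X-i\varepsilon Y$); one small inaccuracy in your closing parenthetical is that the two lines $\mathcal Z\cap\mathcal T_S\mathcal Z$ are not entirely contained in $\mathcal B_0=V(F,\Delta_{\mathbf S}F,Q(\nabla F))$ (only the points with $t(2z+t)=0$ are), but this does not affect the argument.
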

\begin{proof}
In this case, we have $\alpha=x+i\varepsilon y-tz_0$,  $\tilde\beta=-2\alpha tz_0$ and $\gamma=4\alpha z_0^2t^2$.
So (\ref{formequadratique}) becomes $\alpha(\lambda_0+2tz_0\lambda_1)^2=0$.
Hence $\Sigma_S(\mathcal Z)$ is the Zariski closure of the image of
$\mathcal Z$ by the rational map given by $\boldsymbol{\psi}(\mathbf{m})=
-2tz_0\cdot\mathbf{m}+\boldsymbol{\sigma}(\mathbf{m}).$
On $\mathcal Z$, using the fact that $2tz=x^2+y^2$, this rational map
can be rewritten
$\boldsymbol{\psi}(\mathbf{m})=\left(\begin{array}{c}-(x+i\varepsilon y)^2+t^2\\i\varepsilon((x+i\varepsilon y)^2+t^2)\\-z_0t^2+2(x+i\varepsilon y)t\\
-2t^2z_0\end{array}\right).$ To conclude observe $\boldsymbol{\psi}$
only depends on $(x+i\varepsilon y,t)$ and that if $(X,Y,Z,T)=\boldsymbol{\psi}(x,y,z,t)$, then $2t^2=X-\varepsilon Y$ 
and $2(x+i\varepsilon y)^2=-X-i\varepsilon Y$.

\end{proof}
\section{Reflected Polar curves}\label{polar}
Let $H\in Pic(\mathbb P^3)$ be the hyperplane class. We will identify $\pi_{1,*}(\Phi^*H^2)\in A_2(\mathbb P^3)$
with the class of sets $\mathcal P_{A,B}\subseteq \mathbb P^3$ defined as follows.
\begin{defi}\label{defirefpolar}
For any $A,B\in \mathbf{W}^\vee$, we define the set 
$\mathcal D_{A,B}:=V(A,B)\subseteq \PP^3$ and the 
\emph{reflected polar} $\mathcal P_{A,B}$ by
$$\mathcal P_{A,B}=\pi_1(\Phi^{-1}(\mathcal D_{A,B}))\cup \pi_1(Base(\Phi)), $$
i.e. $\mathcal P_{A,B}$ corresponds to the following set:
$$ \{m\in\mathbb P^3 :\ \exists [\lambda_0,\lambda_1]\in\PP^1,\ A(\lambda_0\mathbf{m}+
   \lambda_1\boldsymbol{\sigma}(\mathbf{m}))=0,\ B(\lambda_0\mathbf{m}+
   \lambda_1\boldsymbol{\sigma}(\mathbf{m}))=0,\ 
     \ Q_{{\mathbf{S}},F}(\mathbf{m},\lambda_0,\lambda_1)=0\}. $$
\end{defi}
\begin{prop}\label{K1K2K3}
For  generic $A,B$ in $\mathbf{W}^\vee$,
$\mathcal D_{A,B}$ is a line and
$\mathcal P_{A,B}=V(K_1,K_2,K_3)$, with
$$K_1(\mathbf{m}):=A(\boldsymbol{\sigma}(\mathbf{m}))B(\mathbf{m})-A(\mathbf{m})B(\boldsymbol{\sigma}(\mathbf{m})),\ \  K_2(\mathbf{m}):=
    Q_{{\mathbf{S}},F}(\mathbf{m},-A(\boldsymbol{\sigma}(\mathbf{m})),
A(\mathbf{m})),$$
$$K_3(\mathbf{m}):=
Q_{{\mathbf{S}},F}(\mathbf{m},-B(\boldsymbol{\sigma}
(\mathbf{m})),B(\mathbf{m})). $$
\end{prop}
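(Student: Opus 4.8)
The plan is to unravel the definition of $\mathcal{P}_{A,B}$ from Definition \ref{defirefpolar} and show it coincides with $V(K_1,K_2,K_3)$ when $A,B$ are generic. Recall that for a point $m$ with $\mathbf{m}\notin V(\sigma)$ and $m\notin\{S\}$, the line $\mathcal{R}_m=(m\,\sigma(m))$ is genuinely a line, so the pencil of points $\lambda_0\mathbf{m}+\lambda_1\boldsymbol{\sigma}(\mathbf{m})$ is a genuine $\mathbb{P}^1$. For generic $A,B$, the linear forms $A$ and $B$ cut out a line $\mathcal{D}_{A,B}=V(A,B)$ (genericity: $A$ and $B$ are linearly independent), which is the first assertion. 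Then a point $P=\Pi(\lambda_0\mathbf{m}+\lambda_1\boldsymbol{\sigma}(\mathbf{m}))$ lies on $\mathcal{D}_{A,B}$ iff both $A(\lambda_0\mathbf{m}+\lambda_1\boldsymbol{\sigma}(\mathbf{m}))=0$ and $B(\lambda_0\mathbf{m}+\lambda_1\boldsymbol{\sigma}(\mathbf{m}))=0$, i.e.
$$\lambda_0 A(\mathbf{m})+\lambda_1 A(\boldsymbol{\sigma}(\mathbf{m}))=0,\qquad \lambda_0 B(\mathbf{m})+\lambda_1 B(\boldsymbol{\sigma}(\mathbf{m}))=0.$$

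First I would treat the main (generic) case where $[A(\boldsymbol{\sigma}(\mathbf{m})):A(\mathbf{m})]\ne[B(\boldsymbol{\sigma}(\mathbf{m})):B(\mathbf{m})]$ as points of $\mathbb{P}^1$: then the two linear equations above in $(\lambda_0,\lambda_1)$ have only the trivial solution unless their ``cross'' determinant vanishes, i.e. unless $A(\boldsymbol{\sigma}(\mathbf{m}))B(\mathbf{m})-A(\mathbf{m})B(\boldsymbol{\sigma}(\mathbf{m}))=K_1(\mathbf{m})=0$; and when $K_1(\mathbf{m})=0$ both equations are satisfied by the single projective solution $[\lambda_0:\lambda_1]=[-A(\boldsymbol{\sigma}(\mathbf{m})):A(\mathbf{m})]=[-B(\boldsymbol{\sigma}(\mathbf{m})):B(\mathbf{m})]$. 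Plugging this particular $(\lambda_0,\lambda_1)$ into the condition $Q_{\mathbf{S},F}(\mathbf{m},\lambda_0,\lambda_1)=0$ gives $K_2(\mathbf{m})=0$ (and equivalently $K_3(\mathbf{m})=0$, since the two representatives of $[\lambda_0:\lambda_1]$ differ by scaling and $Q_{\mathbf{S},F}$ is homogeneous of degree $2$ in $(\lambda_0,\lambda_1)$). This shows that on the locus where the two ratios differ, membership in $\mathcal{P}_{A,B}$ is equivalent to $K_1=K_2=K_3=0$; note that there $K_2$ and $K_3$ are proportional, so keeping all three is harmless. I would also observe that the $\pi_1(Base(\Phi))$ contribution in the definition of $\mathcal{P}_{A,B}$ is absorbed: at a base point $\lambda_0\mathbf{m}+\lambda_1\boldsymbol{\sigma}(\mathbf{m})=0$, so $A$ and $B$ both annihilate it automatically, hence base points always lie in $\pi_1(\Phi^{-1}(\mathcal{D}_{A,B}))$ already (and they satisfy $K_1=K_2=K_3=0$ trivially, since $A(\mathbf{m})B(\boldsymbol{\sigma}(\mathbf{m}))$-type terms and $Q_{\mathbf{S},F}$ all vanish there because $\mathbf{m},\boldsymbol{\sigma}(\mathbf{m})$ are proportional and $(m,[\lambda_0:\lambda_1])\in\hat Z$).

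The remaining step is the degenerate locus $D$ where $[A(\boldsymbol{\sigma}(\mathbf{m})):A(\mathbf{m})]=[B(\boldsymbol{\sigma}(\mathbf{m})):B(\mathbf{m})]$ as points of $\mathbb{P}^1$; here one must check that the scheme-theoretic equality $\mathcal{P}_{A,B}=V(K_1,K_2,K_3)$ still holds after taking Zariski closures, or at least that for generic $A,B$ this locus does not produce spurious components or omit genuine ones. I expect this to be the main obstacle: one has to argue that $D$ is lower-dimensional (for generic $A,B$ it is cut by the extra condition $A(\mathbf{m})B(\boldsymbol{\sigma}(\mathbf{m}))=A(\boldsymbol{\sigma}(\mathbf{m}))B(\mathbf{m})$, i.e. $K_1=0$, intersected with the condition that forces the ratios to agree rather than just the cross-product to vanish — which is a proper subvariety once $\sigma$ is not identically the identity, i.e. $\mathcal{Z}\ne\mathcal{H}^\infty$), so that both $\mathcal{P}_{A,B}$ and $V(K_1,K_2,K_3)$ are determined by their behaviour on the open dense complement by taking closures. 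Concretely I would argue: $K_1,K_2,K_3$ are homogeneous polynomials in $\mathbf{W}^\vee$ (here $K_1\in Sym^{2d-1}$, and $K_2,K_3\in Sym^{2(5d-7)\vee}$-type degrees computed from those of $\alpha,\beta,\gamma$), hence $V(K_1,K_2,K_3)$ is closed; it contains the open dense part of $\mathcal{P}_{A,B}$ by the computation above; and conversely any point of $V(K_1,K_2,K_3)$ either lies on that open part (done) or on $D$, and on $D$ one checks directly using $K_1=0$ together with the defining relation of $D$ that there is still a $[\lambda_0:\lambda_1]\in\mathbb{P}^1$ solving all three membership conditions, so it lies in $\mathcal{P}_{A,B}$. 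Thus the two sets agree. I would not grind through the explicit degree bookkeeping here, only note that it is routine from \refeq{alpha0}, \refeq{beta0}, \refeq{gamma0} and the fact that each $\sigma^{(i)}\in Sym^{2(d-1)}(\mathbf{W}^\vee)$.
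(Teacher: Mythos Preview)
Your overall strategy---eliminate $[\lambda_0:\lambda_1]$ from the three conditions by first taking the determinant $K_1$ of the $2\times 2$ linear system in $(\lambda_0,\lambda_1)$, then substituting the solution into $Q_{\mathbf S,F}$---is exactly the paper's approach. But your case analysis is tangled, and this leads to a real gap.

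You split into the ``main case'' where $[A(\boldsymbol\sigma(\mathbf m)):A(\mathbf m)]\ne[B(\boldsymbol\sigma(\mathbf m)):B(\mathbf m)]$ and a ``degenerate locus'' $D$ where the ratios agree. But if the two ratios genuinely differ (both pairs nonzero and non-proportional), then $K_1(\mathbf m)\ne 0$ automatically, so your sentence ``when $K_1=0$ both equations are satisfied by the single projective solution'' is vacuous in that case: you are describing something that only happens \emph{outside} your main case. Conversely, your degenerate locus $D$ is exactly $V(K_1)$ (when both pairs are nonzero), so essentially all of $\mathcal P_{A,B}$ lives there, not on a ``lower-dimensional'' exceptional set you can sweep under a closure argument. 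The correct dichotomy is not ``ratios differ vs.\ agree'' but rather ``at least one of the pairs $(A(\mathbf m),A(\boldsymbol\sigma(\mathbf m)))$, $(B(\mathbf m),B(\boldsymbol\sigma(\mathbf m)))$ is nonzero'' vs.\ ``both pairs vanish.''

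The paper handles this cleanly by choosing the genericity condition so that $\mathcal D_{A,B}$ is not equal to any line $(m\,\sigma(m))$; this forces $V(A,B,A\circ\boldsymbol\sigma,B\circ\boldsymbol\sigma)=\mathcal M_{S,\mathcal Z}\cap\mathcal D_{A,B}$. Then the reverse inclusion splits into: (i) if some pair is nonzero, take it as $[\lambda_0:\lambda_1]$ and read off membership in $\mathcal P_{A,B}$ from $K_1=K_2=K_3=0$; (ii) if both pairs vanish, then $m\in\mathcal M_{S,\mathcal Z}\cap\mathcal D_{A,B}$, so $\mathbf m$ and $\boldsymbol\sigma(\mathbf m)$ are proportional and any $[\lambda_0:\lambda_1]$ satisfying $Q_{\mathbf S,F}(\mathbf m,\lambda_0,\lambda_1)=0$ (which always exists over $\mathbb C$) witnesses $m\in\mathcal P_{A,B}$. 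No closure or dimension arguments are needed; the equality is shown set-theoretically. Your closure discussion and the ``I expect this to be the main obstacle'' paragraph should be replaced by this direct two-case check.
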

\begin{proof}
Recall that $\mathcal M_{S,\mathcal Z}$ has been defined in Definition \ref{defiM}.
Assume that $\mathcal D_{A,B}$ is a line that does not correspond to any line
$(m\, \sigma(m))$ for $m\in\mathbb P^3\setminus  \mathcal M_{S,\mathcal Z}$ 
(this is true for a generic $(A,B)$ in $(\mathbf{W}^\vee)^2$).
Hence
\begin{equation}\label{ensvide}
V(A,B,A\circ\boldsymbol{\sigma},B\circ\boldsymbol{\sigma})=\mathcal M_{S,\mathcal Z}\cap \mathcal D_{A,B}.
\end{equation}
Let $m\in\mathcal P_{A,B}$ and $[\lambda_0:\lambda_1]\in\mathbb P^1$ 
be such that $A(\boldsymbol{\Phi}(\mathbf{m},\lambda_0,\lambda_1))=0
    =B(\boldsymbol{\Phi}(\mathbf{m},\lambda_0,\lambda_1)
 )=0$ and $Q_{\mathbf{S},F}(\mathbf{m},\lambda_0,\lambda_1)=0$. 
This implies that
$$\lambda_0\cdot A(\mathbf{m})+\lambda_1\cdot A(\boldsymbol{\sigma}(\mathbf{m}))=0=
 \lambda_0\cdot B(\mathbf{m})+\lambda_1\cdot B(\boldsymbol{\sigma}(\mathbf{m})).$$
Therefore $(-A(\boldsymbol{\sigma}(\mathbf{m})),A(\mathbf{m}))$ and
$(-B(\boldsymbol{\sigma}(\mathbf{m})),B(\mathbf{m}))$
are proportional to $(\lambda_0,\lambda_1)$.
But, since $Q_{{\mathbf{S}},F}(\mathbf{m},\lambda_0,\lambda_1)=0$, we conclude that $m$ is in $V(K_1,K_2,K_3)$.

Conversely, assume now that $m$ is a point of $V(K_1,K_2,K_3)$. 
Due to (\ref{ensvide}), we have
$$  m\not\in V(A,B,A\circ\boldsymbol{\sigma},B\circ\boldsymbol{\sigma})\ \ \mbox{or}\ \ m\in\mathcal M_{S,\mathcal Z}\cap \mathcal D_{A,B}.$$
Assume first that $m\not\in V(A,B,A\circ\boldsymbol{\sigma},B\circ\boldsymbol{\sigma})$, then
$ (-A(\boldsymbol{\sigma}(\mathbf{m})),A(\mathbf{m}))$
and $(-B(\boldsymbol{\sigma}(\mathbf{m})),B(\mathbf{m}))$
are proportional and at least one is non null. Let
$[\lambda_0:\lambda_1]$ be the corresponding point in $\PP^1$.
we have
$A(\lambda_0\cdot\mathbf{m}+\lambda_1\cdot\boldsymbol{\sigma}(\mathbf{m}))=0$, 
$ B(\lambda_0\cdot\mathbf{m}+\lambda_1\cdot\boldsymbol{\sigma}(\mathbf{m}))=0$, and 
$Q_{{\mathbf{S}},F}(\mathbf{m},\lambda_0,\lambda_1)=0$. So
$m\in\mathcal P_{A,B}$.

Assume finally that $m\in\mathcal M_{S,\mathcal Z}\cap \mathcal D_{A,B}$,
then there exists $[\lambda:\mu]\in\mathbb P^1$ such that
$\lambda\cdot\mathbf{m}+\mu\cdot\boldsymbol{\sigma}(\mathbf m)=0$, $A(\mathbf m)=B(\mathbf m)=0$. We also have
$A(\boldsymbol{\sigma}(\mathbf m))=B(\boldsymbol{\sigma}(\mathbf m))=0$ and so $m\in\mathcal P_{A,B}$.
\end{proof}
\begin{nota}\label{BSZ}
We write $B_{S,\mathcal Z}$ for the set
of points $m\in\mathbb P^3$ for which $Q_{{\mathbf{S}},F}(\mathbf{m},\lambda_0,\lambda_1)=0$
in $\mathbb C[\lambda_0,\lambda_1]$.
\end{nota}
Observe that, for $m\in\mathbb P^3\setminus B_{S,\mathcal Z}$, there are at most two $[\lambda_0:\lambda_1]\in\mathbb P^1$
such that $Q_{{\mathbf{S}},F}(\mathbf{m},\lambda_0,\lambda_1)=0$, and so $\#(\mathcal X\cap\pi_1^{-1}(m))\le  2$.
\begin{rqe}\label{rqeBSZ}
According to the expressions of $\sigma$, $\alpha$, $\beta$ and $\gamma$, we have
$$ B_{S,\mathcal Z}=V(\alpha,\beta,\gamma)=V(\Delta_{\mathbf{S}}F, \hess_F(\mathbf{S},\mathbf{S})\cdot Q(\nabla F)).$$ 
\end{rqe}
We observe that $\dim B_{S,\mathcal Z}\ge 1$.
Observe that $(\mathcal M_{S,\mathcal Z}\cup B_{S,\mathcal Z})$
is the set of $m\in\mathbb P^3$ such that $\Phi(\pi^{-1}(\{m\}))
=\Pi(Vect(\mathbf{m},\boldsymbol{\sigma}(\mathbf{m})))$. When
$m\in \mathcal M_{S,\mathcal Z}$, $\Phi(\pi^{-1}(\{m\}))=\{m\}$
and when $m\in B_{S,\mathcal Z}$, $\Phi(\pi^{-1}(\{m\}))=\mathcal R_m$.
Recall that $\mathcal B=\pi_1(B_{\Phi_{|\hat Z}})$.
\begin{prop}\label{degreepolar}
Assume that $\# \mathcal B<\infty$ and $\mathcal Z\ne\mathcal H^\infty$.
Then, for generic $A,B\in\mathbf{W}^\vee$, $\dim\mathcal P_{A,B}=1$ and 
$\deg\mathcal P_{A,B}=(d-1)(10d-9)$.
\end{prop}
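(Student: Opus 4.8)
The plan is to compute the degree of $\mathcal P_{A,B}=V(K_1,K_2,K_3)$ by a direct intersection-theoretic / Bézout-style argument, using the explicit formulas for $K_1$, $K_2$, $K_3$ from Proposition~\ref{K1K2K3}, together with the degree bookkeeping in the variety $\mathbb F_{(3)}(-2d+3,0)$ introduced in Section~\ref{map}. First I would record the degrees: since each coordinate of $\boldsymbol{\sigma}$ lies in $Sym^{2d-2}(\mathbf W^\vee)$ and $A,B$ are linear, $K_1=A(\boldsymbol\sigma)B-A B(\boldsymbol\sigma)$ has degree $2d-1$. For $K_2(\mathbf m)=Q_{\mathbf S,F}(\mathbf m,-A(\boldsymbol\sigma(\mathbf m)),A(\mathbf m))$ one substitutes $\lambda_0$ of degree $2d-2$ and $\lambda_1$ of degree $1$ into $\alpha\lambda_0^2+\beta\lambda_0\lambda_1+\gamma\lambda_1^2$ with $\deg\alpha=d-1$, $\deg\beta=3d-4$, $\deg\gamma=5d-7$; the three monomials have degrees $(d-1)+2(2d-2)=5d-5$, $(3d-4)+(2d-2)+1=5d-5$, and $(5d-7)+2=5d-5$, so $K_2$ (and likewise $K_3$) is homogeneous of degree $5d-5$. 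A naive Bézout bound would give $(2d-1)(5d-5)^2$, which is far larger than $(d-1)(10d-9)$, so the crux is identifying the excess intersection and subtracting it.

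The key step is therefore to understand the base locus that $K_1,K_2,K_3$ share with a ``parasitic'' component. Observe that wherever $A(\mathbf m)=A(\boldsymbol\sigma(\mathbf m))=0$ (and similarly for $B$), all three $K_i$ vanish identically, and more relevantly: $K_2$ and $K_3$ both vanish on $B_{S,\mathcal Z}=V(\alpha,\beta,\gamma)$ (Remark~\ref{rqeBSZ}), i.e. on $V(\Delta_{\mathbf S}F,\ \hess_F(\mathbf S,\mathbf S)\cdot Q(\nabla F))$, while $K_1$ in general does not vanish there. The right way to organise this is to pass to the incidence picture: $\mathcal P_{A,B}=\pi_1(\Phi^{-1}(\mathcal D_{A,B}))\cup\pi_1(\base\Phi)$, and $\Phi^*H$ on $\mathcal X\subset\mathbb F_{(3)}(-2d+3,0)$ is the class of the linear form $A\circ\boldsymbol\Phi$, which has multidegree $(2d-1,1)$ in the $(k,\ell)$-grading of Section~\ref{map}. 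Then $\pi_{1,*}(\Phi^*H^2)$ is computed as the pushforward to $\mathbb P^3$ of $(2d-1,1)^2$ intersected with $[\hat Z]=[F]\cdot[Q_{\mathbf S,F}]$, where $[F]$ has multidegree $(d,0)$ and $[Q_{\mathbf S,F}]$ has multidegree $(5d-7,2)$ (consistent with $\alpha,\beta,\gamma$ above). The pushforward kills the classes without enough $\ell$-degree, and one is left with a polynomial in $d$; the claim is that this equals $(d-1)(10d-9)$, i.e. $10d^2-19d+9$.

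Concretely I would carry out the following steps in order. (1) Fix the Chow ring of $\mathbb F_{(3)}(-2d+3,0)$: writing $h=\pi_1^*H$ and $e$ for the tautological class of the $\mathbb P^1$-bundle, impose the two relations $h^4=0$ and $e(e+(2d-3)h)=0$, with $\pi_{1,*}(e h^3)=1$, $\pi_{1,*}(h^3)=0$. (2) Compute $[\hat Z]=\big(dh\big)\cdot\big((5d-7)h+2e\big)$ and $(\Phi^*H)^2=\big((2d-1)h+e\big)^2$. (3) Form the product $(\Phi^*H)^2\cdot[\hat Z]$, reduce modulo the two relations so that only the top class $eh^3$ survives, and read off the coefficient: this gives the ``scheme-theoretic'' degree $\deg\big(\pi_{1,*}(\Phi^*H^2)\big)$. (4) Correct for the base points: the subvariety $\base(\Phi_{|\hat Z})$, being nonempty (Remark~\ref{Bgene}) but finite under the hypothesis $\#\mathcal B<\infty$, contributes to $\pi_1(\base\Phi)$ only a finite set, which is absorbed into $\mathcal P_{A,B}$ without changing the degree of its $1$-dimensional part; one checks that adding $\pi_1(\base\Phi)$ does not raise the dimension, using $\mathcal Z\ne\mathcal H^\infty$ and Lemma~\ref{dim3}. (5) Confirm $\dim\mathcal P_{A,B}=1$ for generic $(A,B)$: since $\mathcal D_{A,B}$ is a generic line it meets $\Sigma_S(\mathcal Z)$, a surface, in finitely many points, each pulling back under the generically $2:1$ (or finite-degree $\delta$, by Proposition~\ref{blowingup}) map $\Phi_{|\hat Z}$ to finitely many points of $\hat Z$, hence $\Phi^{-1}(\mathcal D_{A,B})$ is a curve and $\pi_1$ of it is a curve.

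The main obstacle I expect is step (4) combined with the genericity verification in step (5): one must be careful that the explicit cuts $V(K_1,K_2,K_3)$ of Proposition~\ref{K1K2K3} really cut out $\mathcal P_{A,B}$ with the correct multiplicity and no embedded or spurious higher-dimensional components coming from the locus $V(A,B,A\circ\boldsymbol\sigma,B\circ\boldsymbol\sigma)=\mathcal M_{S,\mathcal Z}\cap\mathcal D_{A,B}$ (equation~(\ref{ensvide})) and from $B_{S,\mathcal Z}$, where the fibre of $\mathcal X\to\mathbb P^3$ jumps. The clean way around this is to avoid $K_1,K_2,K_3$ for the degree count and instead compute everything upstairs on $\hat Z$ via the intersection product $\pi_{1,*}((\Phi^*H)^2\cdot[\hat Z])$ as in steps (1)--(3), which is exactly what the statement ``we identify $\pi_{1,*}(\Phi^*H^2)$ with the class of $\mathcal P_{A,B}$'' is designed for; the explicit equations then serve only to exhibit $\mathcal P_{A,B}$ as an honest curve, not to count. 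Assembling the arithmetic in step (3) should yield precisely $(d-1)(10d-9)$, which I would double-check against small cases (e.g. $d=1$ gives $0$, consistent with a plane having trivial caustic geometry; $d=2$ gives $11$).
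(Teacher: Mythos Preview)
Your approach via the Chow ring of the bundle $\mathbb F_{(3)}(-2d+3,0)$ is a legitimate alternative, but there are two concrete errors that make the plan fail as written. First, you systematically conflate $\hat Z$ with $\mathcal X$. The reflected polar $\mathcal P_{A,B}$ is a curve in $\mathbb P^3$, defined as $\pi_1(\Phi^{-1}(\mathcal D_{A,B}))\cup\pi_1(\base\Phi)$ where $\Phi$ is the rational map $\mathcal X\to\mathbb P^3$ on the \emph{three}-dimensional variety $\mathcal X=V(Q_{\mathbf S,F})$; the surface $\mathcal Z$ does not enter. Hence the class to intersect is $[\mathcal X]=(5d-7)h+2e$, not $[\hat Z]=dh\cdot[\mathcal X]$, and the degree of the curve is obtained by further cutting with a hyperplane class $h$. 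Your product $(\Phi^*H)^2\cdot[\hat Z]$ is a $0$-cycle computing $\deg(\mathcal P_{A,B}\cap\mathcal Z)=d\cdot\deg\mathcal P_{A,B}$, off by a factor of $d$. The same confusion appears in step~(5): $\Phi_{|\hat Z}^{-1}$ of a finite set is finite, not a curve; what you need is $\Phi_{|\mathcal X}^{-1}(\mathcal D_{A,B})$. Second, the multidegree of $A\circ\boldsymbol\Phi$ is $(2d-2,1)$, not $(2d-1,1)$: the paper states explicitly that $\boldsymbol\Phi\in(C_{2(d-1),1})^4$. With these two corrections the computation $[\mathcal X]\cdot\big((2d-2)h+e\big)^2\cdot h$ does yield $(d-1)(10d-9)$ using your relation $e(e+(2d-3)h)=0$.

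The paper's own proof is genuinely different and more elementary: it works entirely in $\mathbb P^3$ with the explicit polynomials $K_1,K_2,K_3$. Outside $V(A,A\circ\boldsymbol\sigma)$ one has $\mathcal P_{A,B}=V(K_1,K_2)$, so the naive degree is $\deg K_1\cdot\deg K_2=(2d-1)\cdot 5(d-1)$. The excess is not $B_{S,\mathcal Z}$ as you guess, but rather the curve $V(A,A\circ\boldsymbol\sigma)$ itself, which sits inside $V(K_1,K_2)$ yet meets $\mathcal P_{A,B}$ only in the finite set $V(A,A\circ\boldsymbol\sigma,K_3)$. The technical heart is a local multiplicity computation showing that on a generic hyperplane section, $i_m(\mathcal H,V(K_1,K_2))=2\,i_m(\mathcal H,V(A,A\circ\boldsymbol\sigma))$ at each such excess point; subtracting $2\cdot\deg A\cdot\deg(A\circ\boldsymbol\sigma)=4(d-1)$ from $5(d-1)(2d-1)$ gives $(d-1)(10d-9)$. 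Your bundle approach, once corrected, bypasses this multiplicity-$2$ lemma at the cost of needing to justify that $\pi_1$ restricted to $\Phi^{-1}(\mathcal D_{A,B})\cap\mathcal X$ is generically one-to-one onto $\mathcal P_{A,B}$ and that the scheme-theoretic intersection upstairs is reduced for generic $(A,B)$.
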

\begin{proof}
As in the proof of the preceding proposition, we consider generic $(A,B)\in (\mathbf{W}^\vee)^2$ such that
(\ref{ensvide}) holds.
Recall that $\mathcal P_{A,B}=V(K_1,K_2,K_3)$.
First, we observe that $K_1\in Sym^{2d-1}(\mathbf{W}^\vee)$
whereas $K_2,K_3\in 
Sym^{5(d-1)}(\mathbf{W}^\vee)$. 
Now, if $m$ is a point of $\mathbb P^3\setminus 
V(A,A\circ\boldsymbol{\sigma})$, then the following equivalence holds true
$m\in\mathcal P_{A,B}\ \ \Leftrightarrow\ \  m\in V(K_1,K_2)$
and that, if $m$ is a point of $\mathbb P^3\setminus
V(B,B\circ\boldsymbol{\sigma})$, then 
$m\in\mathcal P_{A,B}\Leftrightarrow m\in V(K_1,K_3).$
Therefore $\dim\mathcal P_{A,B}\in\{1,2\}$.
\begin{itemize}
\item Let us prove that $\dim\mathcal P_{A,B}=1$.
Assume first that $\dim(\Sigma_S(\mathcal Z))\le 1$. 
Then, for generic $(A,B)\in(\mathbf{W}^\vee)^2$, we have $\Sigma_{\mathcal S}(\mathcal Z)\cap \mathcal D_{A,B}=\emptyset$.
Therefore, $\pi_1(\Phi_{|\hat Z}^{-1}(\mathcal D_{A,B}))=\emptyset$ and so $\mathcal Z\cap \mathcal P_{A,B}=\mathcal B$
is a finite set, which implies that $\dim\mathcal P_{A,B}\le 1$.

Assume now that $\dim(\Sigma_S(\mathcal Z))=2$.
Let us consider a generic $(A,B)\in(\mathbf{W}^\vee)^2$ such that $\#(\Sigma_{S}(\mathcal Z)
    \cap \mathcal D_{A,B})<\infty$ and such that, for every $P\in \Sigma_{S}(\mathcal Z)
    \cap \mathcal D_{A,B}$, we have
$\#[\pi_1(\Phi_{|\hat Z}^{-1}(\{P\}))\setminus\mathcal B]=\delta,$
(see Proposition \ref{blowingup}). This
implies that $\#\pi_1(\Phi_{|\hat Z}^{-1}(\mathcal D_{A,B}))<\infty$ and so 
$\#(\mathcal Z\cap \mathcal P_{A,B})<\infty$ (since $\#\mathcal B<\infty$). 
Hence $\dim\mathcal P_{A,B}\le 1$
since $\dim\mathcal Z=2$.
\item Let $(A,B)$ as above. Since $\dim\mathcal P_{A,B}=1$,
$\deg \mathcal{P}_{A,B}$ corresponds to $\#(\mathcal P_{A,B}\cap \mathcal{H})$ for a generic plane $\mathcal H$ in $\mathbb P^3$.
Due to Corollary \ref{B}, we have $\#V(F,\Delta_{\mathbf S}F,Q(\nabla F))\le\#\mathcal B<\infty$. So
$\dim V(\Delta_{\mathbf S}F,Q(\nabla F))=1$. Moreover we have $\mathcal Z\ne\mathcal H^\infty$.
So, due to Lemma \ref{dim3}, we conclude that $\dim\mathcal M_{S,\mathcal Z}<3$
and we assume that $\# (\mathcal M_{S,\mathcal Z}\cap \mathcal D_{A,B})<\infty$.
Now, since (\ref{ensvide}) holds, we conclude that
$\dim V(A,A\circ\boldsymbol{\sigma})=1=\dim V(B,B\circ\boldsymbol{\sigma})$.

Since $\dim\mathcal P_{A,B}=1$, we conclude that $\dim V(K_1,K_2)=\dim V(K_1,K_3)=1$.
Moreover, for a generic $(A,B)\in(\mathbf{W}^\vee)^2$, we have
$$\# E<\infty, \ \mbox{with}\ E:=V(A,A\circ\boldsymbol{\sigma},K_3)
        \cup V(B,B\circ\boldsymbol{\sigma},K_2)<\infty .$$
Let us explain how we get $\#V(A,A\circ\boldsymbol{\sigma},K_3)<\infty$.
We recall that, due to \eqref{ensvide}, $\# V(A,A\circ\sigma,B,B\circ\sigma)<\infty$.
We write $E_1:=V(A,A\circ\boldsymbol{\sigma},K_3)\setminus V(B,B\circ\sigma)$ to simplify notations. 
\begin{itemize}
\item First we observe that $E_1\cap\mathcal M_{S,\mathcal Z}\subset V(A)\cap\mathcal M_{S,\mathcal Z}$
which is finite for a generic $A\in \mathbf{W}^\vee$ since $\dim\mathcal M_{S,\mathcal Z}<3$.
\item Second we observe that $E_1\cap(B_{S,\mathcal Z}\setminus\mathcal M_{S,\mathcal Z})=\emptyset$
for a generic $A\in \mathbf{W}^\vee$.
Indeed this set is contained in
$V(A,A\circ\boldsymbol{\sigma},\Delta_{\mathbf S}F)\setminus V(Q(\nabla F))$. For $m$ in this set,
we have $\sigma(m)=S$. So, we just have to take $A$ such that $A(\mathbf S)\ne 0$.
\item Third we observe that $\# E_1\setminus B_{S,\mathcal Z}<\infty$ for generic
$A,B\in \mathbf{W}^\vee$.
Indeed, $\dim V(A,A\circ\sigma)=1$ (due to \eqref{ensvide}) and, for any $m\in V(A,A\circ\boldsymbol{\sigma})\setminus B_{S,\mathcal Z}$,
there are at most two $[\lambda_0:\lambda_1]\in\mathbb P^1$ such that 
$Q_{{\mathbf{S}},F}(m,\lambda_0,\lambda_1)=0$. So, for a generic $B\in \mathbf{W}^\vee$, we have
$$\#\{(m,[\lambda_0:\lambda_1])\in \mathcal X :\ m\in V(A,A\circ\boldsymbol{\sigma})\setminus B_{S,\mathcal Z},\ 
     B(\Phi(m,[\lambda_0,\lambda_1]))=0\}<\infty.$$
Now, if $m\in E_1\setminus B_{S,\mathcal Z}$, 
$[-B(\boldsymbol{\sigma}(\mathbf{m})):B(\mathbf{m})]\in\mathbb P^1$ and, due to $K_3(\mathbf{m})=0$, we observe that
$M:=(m,[-B(\boldsymbol{\sigma}(\mathbf{m})):B(\mathbf{m})])$ is in $\mathcal X$ and $B(\Phi(M))=0$.
So $\# E_1\setminus B_{S,\mathcal Z}<\infty$.
\end{itemize}
Let us prove that $\deg(\mathcal P_{S,\mathcal Z,A,B})=(d-1)(10d-9)$
for a generic $(A,B)\in(\mathbf W^\vee)^2$.
We consider generic $A$ and $B$ such that:
$\deg A= 1$, $\deg A\circ\sigma=2d-2$, $V( A, A\circ\sigma,
 B,B\circ\sigma)<\infty$ and
$\# V(A,A\circ\sigma,K_3)<\infty$.
We observe that the reflected polar curve corresponds to $V(K_1,K_2)$
outside $V(A, A\circ\sigma)$ and that
the polar curve coincide with $V(K_3)$ on $V( A, A\circ\sigma)$ (since $V(A,A\circ\sigma)$ is contained in $V(K_1,K_2)$).
We consider a generic plane $\mathcal H$ in $\mathbb P^3$, 
such that $\mathcal H\cap V( A, A\circ\sigma,
 B,B\circ\sigma)=\emptyset $ and $\mathcal H\cap V( A, A\circ\sigma,K_3)=\emptyset$.
Since  $\mathcal H\cap V( A, A\circ\sigma,K_3)=\emptyset$, we have
\begin{eqnarray*}
\deg(\mathcal P_{S,\mathcal Z,A,B})&=&\deg V(K_1,K_2)
    -\sum_{m\in V( A,A\circ\sigma)\cap\mathcal H}
       i_m(\mathcal H,V(K_1,K_2))\\
&=&5(d-1)(2d-1)-\sum_{m\in V(A,A\circ\sigma)\cap\mathcal H}
       i_m(\mathcal H,V(K_1,K_2)).
\end{eqnarray*}
Now, we prove that
\begin{equation}\label{double}
\forall m\in V(A,A\circ\sigma)\cap\mathcal H,\ \ i_m(\mathcal H,V(K_1,K_2))=2i_m(\mathcal H,V(A,A\circ\sigma)).
\end{equation}
Let $m\in V(A,A\circ\sigma)\cap\mathcal H$.
If $B(m)\ne 0$, then 
\begin{eqnarray*}
i_m(\mathcal H,V(K_1,K_2))&=&i_m(\mathcal H,K_1,K_2)=i_m(\mathcal H,K_1,B^2\cdot K_2)\\
&=& i_m(\mathcal H,K_1,\alpha(B\cdot A\circ\sigma)^2-\beta\cdot A\cdot B\cdot (B\cdot A\circ\sigma)+\gamma\cdot A^2\cdot B^2)\\
&=& i_m(\mathcal H,K_1,\alpha(A\cdot B\circ\sigma)^2-\beta\cdot A\cdot B\cdot (A\cdot B\circ\sigma)+\gamma\cdot A^2\cdot B^2),
\end{eqnarray*}
(since $B\cdot A\circ\sigma=A\cdot B\circ\sigma$ on $V(K_1)$) and so
\begin{eqnarray*}
i_m(\mathcal H,V(K_1,K_2))&=&i_m(\mathcal H,K_1,A^2\cdot K_3)\\
&=&i_m(\mathcal H,K_1,A^2)\ \ \ \mbox{since}\ K_3(m)\ne 0\\
&=&i_m(\mathcal H,A\circ\sigma\cdot B-B\circ\sigma\cdot A,A^2)\\
&=&2\, i_m(\mathcal H,A\circ\sigma\cdot B-B\circ\sigma\cdot A,A)\\
&=&2\, i_m(\mathcal H,A\circ\sigma\cdot B,A)\\
&=&2\, i_m(\mathcal H,A\circ\sigma,A)\ \ \ \mbox{since}\ B(m)\ne 0.
\end{eqnarray*}
Analogously, If $B(\sigma(m))\ne 0$, then 
\begin{eqnarray*}
i_m(\mathcal H,V(K_1,K_2))&=&i_m(\mathcal H,K_1,K_2)=i_m(\mathcal H,K_1,B^2\circ\sigma\cdot K_2)\\
&=& i_m(\mathcal H,K_1,\alpha \cdot B^2\circ\sigma\cdot A^2\circ\sigma-\beta\cdot A\circ\sigma.B\circ\sigma\cdot (A\cdot B\circ\sigma)+\gamma\cdot A^2\cdot B^2\circ\sigma)\\
&=& i_m(\mathcal H,K_1,\alpha(A\circ\sigma\cdot B\circ\sigma)^2-\beta\cdot A\circ\sigma\cdot B\circ\sigma\cdot (B\cdot A\circ\sigma)+\gamma\cdot B^2\cdot A^2\circ\sigma)\\
&=&i_m(\mathcal H,K_1,A^2\circ\sigma\cdot K_3)
=i_m(\mathcal H,K_1,A^2\circ\sigma)\\
&=&i_m(\mathcal H,A\circ\sigma\cdot B-B\circ\sigma\cdot A,A^2\circ\sigma)\\
&=&2\, i_m(\mathcal H,A\circ\sigma\cdot B-B\circ\sigma\cdot A,A\circ\sigma)=2\, i_m(\mathcal H,A,A\circ\sigma).
\end{eqnarray*}
Hence we proved (\ref{double}) and, for a generic plane $\mathcal H$, we have
\begin{eqnarray*}
\deg(\mathcal P_{S,\mathcal Z,A,B})
&=&5(d-1)(2d-1)-\sum_{m\in V(A,A\circ\sigma)\cap\mathcal H}
       i_m(\mathcal H,V(K_1,K_2))\\
&=&5(d-1)(2d-1)-2\sum_{m\in V(A,A\circ\sigma)\cap\mathcal H}
       i_m(\mathcal H,V(A,A\circ\sigma))\\
&=&5(d-1)(2d-1)-2\deg(A)\deg(A\circ\sigma)\\
&=&5(d-1)(2d-1)-4(d-1)=(d-1)(10d-9).
\end{eqnarray*}
\end{itemize}
\end{proof}
\section{A formula for the degree of the caustic}\label{degree}
Recall that $\mathcal B$ has been completely described in
Corollary \ref{B} (see also Remark \ref{Bgene} for the general case).
We refer to Definition \ref{defiM} and Proposition \ref{B1}
for $\mathcal M_{S,\mathcal Z }$ and to Notation \ref{BSZ} and
Remark \ref{rqeBSZ} for $\mathcal B_{S,\Z}$.
Observe that $\dim\mathcal M_{S,\mathcal Z}\ge 1$ since $\base(\sigma)\subseteq \mathcal M_{S,\mathcal Z}$.
\begin{thm}\label{lemmefondamental}
We assume that $\#\mathcal B<\infty$.

If $\dim(\Sigma_{S}(\Z))<2$, then for a generic $(A,B)\in(\mathbf{W}^\vee)^2$, we have
$$0=d(d-1)(10d-9) -\sum_{m\in \B}i_m(\mathcal Z,\mathcal P
     _{A,B}).$$

If $\dim(\Sigma_{S}(\Z))=2$, $\dim  (\mathcal Z\cap \mathcal M_{S,\mathcal Z })\le 1$ and 
$\#(\Z\cap B_{S,\mathcal Z}\setminus \mathcal M_{S,\mathcal Z})<\infty$, then
for a generic $(A,B)\in(\mathbf{W}^\vee)^2$, we have
$$\mdeg(\Sigma_{S}(\mathcal Z))=d(d-1)(10d-9) -\sum_{m\in \B}i_m(\mathcal Z,\mathcal P
     _{A,B}),$$
where $\mdeg(\Sigma_{S}(\mathcal Z))$ is the degree with multiplicity of 
$(\Sigma_{S}(\mathcal Z))$
($\mdeg(\Sigma_{S}(\mathcal Z))=\delta \deg(\Sigma_{S}(\mathcal Z))$, 
see Proposition \ref{blowingup} for the property satisfied by $\delta$), 
where $d$ is the degree of $\mathcal Z$ 
and where $i_m(\mathcal Z,\mathcal P
     _{A,B})$ denotes the intersection number of $\Z$ with $\mathcal P_{A,B}$ at point $m$.
\end{thm}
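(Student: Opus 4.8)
The plan is to compute the degree with multiplicity of $\Sigma_S(\mathcal Z)$ by intersecting it with a generic line $\mathcal D_{A,B}=V(A,B)$ and pulling this intersection back through $\Phi_{|\hat Z}$, then comparing with the global intersection class of the reflected polar $\mathcal P_{A,B}$ with $\mathcal Z$. Concretely, I would start from the push-pull identity: by definition $\mathcal P_{A,B}=\pi_1(\Phi^{-1}(\mathcal D_{A,B}))\cup\pi_1(\base(\Phi))$, and $\pi_{1,*}(\Phi^*H^2)$ is represented by $\mathcal P_{A,B}$ (as announced at the start of Section \ref{polar}). Therefore $\mathcal Z\cdot\mathcal P_{A,B}$ in $A_1(\mathbb P^3)$ has degree $d\cdot\deg\mathcal P_{A,B}=d(d-1)(10d-9)$ by Proposition \ref{degreepolar} (valid since $\#\mathcal B<\infty$ and $\mathcal Z\ne\mathcal H^\infty$, the latter because otherwise $\mathcal B$ would not be finite — one checks $\mathcal Z=\mathcal H^\infty$ forces degenerate behaviour excluded by the hypotheses). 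The total intersection number $d(d-1)(10d-9)$ then decomposes, by additivity of intersection multiplicities along $\mathcal Z\cap\mathcal P_{A,B}$, into the contribution of the base locus $\mathcal B$ and the contribution of the genuine preimages of points of $\Sigma_S(\mathcal Z)\cap\mathcal D_{A,B}$.

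For the base-locus part: by Corollary \ref{B}, $\mathcal B$ is finite, so it contributes exactly $\sum_{m\in\mathcal B}i_m(\mathcal Z,\mathcal P_{A,B})$, and for generic $(A,B)$ no point of $\mathcal P_{A,B}$ outside $\mathcal B$ lies on $\mathcal Z$ unless it is a genuine $\Phi$-preimage of a point of $\Sigma_S(\mathcal Z)\cap\mathcal D_{A,B}$. Here one uses that $\mathcal B\subseteq\mathcal M_{S,\mathcal Z}$, that $\Phi$ collapses $\pi_1^{-1}(\mathcal M_{S,\mathcal Z})$ to the identity and $\pi_1^{-1}(B_{S,\mathcal Z})$ to the reflected lines, and that for generic $(A,B)$ the line $\mathcal D_{A,B}$ is not a reflected line and meets $\mathcal M_{S,\mathcal Z}$ and $B_{S,\mathcal Z}$ only in finitely many points avoiding $\mathcal Z$ — these genericity reductions are already carried out inside the proof of Proposition \ref{degreepolar}, so I would cite them. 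For the remaining part: when $\dim\Sigma_S(\mathcal Z)<2$, for generic $(A,B)$ the line $\mathcal D_{A,B}$ misses $\Sigma_S(\mathcal Z)$ entirely, so $\pi_1(\Phi_{|\hat Z}^{-1}(\mathcal D_{A,B}))=\emptyset$ and $\mathcal Z\cap\mathcal P_{A,B}=\mathcal B$, giving the first displayed formula $0=d(d-1)(10d-9)-\sum_{m\in\mathcal B}i_m(\mathcal Z,\mathcal P_{A,B})$. When $\dim\Sigma_S(\mathcal Z)=2$, Proposition \ref{blowingup} (applicable since $\#\mathcal B<\infty$ and $\dim(\mathcal Z\cap\mathcal M_{S,\mathcal Z})\le 1$) yields $\delta\in\mathbb N^*$ with $\#[\pi_1(\Phi_{|\hat Z}^{-1}(\{P\}))\setminus\mathcal B]=\delta$ for generic $P\in\Sigma_S(\mathcal Z)$; choosing $(A,B)$ generic so that $\mathcal D_{A,B}$ meets $\Sigma_S(\mathcal Z)$ in $\deg\Sigma_S(\mathcal Z)$ points, each of them generic, each point of $\mathcal Z\cap\mathcal P_{A,B}$ outside $\mathcal B$ is counted once and there are $\delta\deg\Sigma_S(\mathcal Z)=\mdeg\Sigma_S(\mathcal Z)$ of them, transverse for generic data; this gives the second formula.

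The main obstacle is controlling the intersection multiplicities of $\mathcal Z$ with $\mathcal P_{A,B}$ away from $\mathcal B$ — i.e. showing that, for generic $(A,B)$ and generic plane, all intersection points of $\mathcal P_{A,B}$ with $\mathcal Z$ other than those in $\mathcal B$ occur with multiplicity one and correspond bijectively, via $(m,[\lambda_0:\lambda_1])\mapsto m$ and $\Phi$, to the $\delta$ preimages of the $\deg\Sigma_S(\mathcal Z)$ points of $\Sigma_S(\mathcal Z)\cap\mathcal D_{A,B}$. This is where the hypothesis $\#(\mathcal Z\cap B_{S,\mathcal Z}\setminus\mathcal M_{S,\mathcal Z})<\infty$ enters: it guarantees that the locus where the fibre $\mathcal X\cap\pi_1^{-1}(m)$ fails to have exactly two points does not sweep out a curve on $\mathcal Z$, so that for generic $(A,B)$ the pullback of $\mathcal D_{A,B}$ by $\Phi_{|\hat Z}$ is a reduced finite scheme off $B_{\Phi_{|\hat Z}}$, mapping isomorphically onto its image in $\mathcal P_{A,B}\cap\mathcal Z$. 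The verification that $\Phi^*H^2$ is represented (with the correct multiplicities) by the $K_i$-description of Proposition \ref{K1K2K3}, and that the blow-up $\widetilde{\hat Z}\to\hat Z$ does not alter the count, are the technical heart; everything else is bookkeeping of genericity conditions already assembled in Propositions \ref{blowingup}, \ref{K1K2K3} and \ref{degreepolar}.
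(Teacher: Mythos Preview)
Your proposal is correct and follows essentially the same route as the paper: compute the total intersection $\mathcal Z\cdot\mathcal P_{A,B}=d(d-1)(10d-9)$ via Proposition~\ref{degreepolar}, split it into the $\mathcal B$-contribution and the rest, and show the rest is $0$ (when $\dim\Sigma_S(\mathcal Z)<2$) or $\delta\deg\Sigma_S(\mathcal Z)$ (when $\dim\Sigma_S(\mathcal Z)=2$) by the genericity count from Proposition~\ref{blowingup}. The one place where your sketch is looser than the paper is the multiplicity-one claim at non-base points (your ``technical heart''): the paper does \emph{not} settle this via intersection classes or the blow-up, but by a direct local argument---near a generic $m\in(\mathcal P_{A,B}\cap\mathcal Z)\setminus\mathcal B$ the two local sheets $\psi^\pm$ of $\Phi$ are distinct (extra genericity condition: $\mathcal D_{A,B}$ avoids $\Phi(\pi_1^{-1}(\mathcal Z\cap V(\beta^2-4\alpha\gamma)))$), the tangent space to $\mathcal P_{A,B}$ at $m$ is $V(A\circ D\psi^\varepsilon(\mathbf m),\,B\circ D\psi^\varepsilon(\mathbf m))$, and generic invertibility of $D\psi^\varepsilon$ yields transversality with $\mathcal Z$; this is precisely where the hypothesis $\#(\mathcal Z\cap B_{S,\mathcal Z}\setminus\mathcal M_{S,\mathcal Z})<\infty$ is used (condition~(e), to ensure $\mathcal D_{A,B}$ meets no reflected line over $B_{S,\mathcal Z}$).
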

Let us notice, that in this formula, we can replace $i_m(\mathcal Z,\mathcal P
     _{A,B})$ by $i_m(\mathcal Z,V(K_1,K_2))$, with the notations of Proposition \ref{K1K2K3}.
Indeed, we can take $A$ and $B$ such that $\# V(A,A\circ\sigma,K_3)<\infty$ (see the proof of
Proposition \ref{degreepolar}) and use $\mathcal P_{a,b}\setminus V(A,A\circ\sigma)=V(K_1,K_2)\setminus
 V(A,A\circ\sigma)$).

Let us recall that the case when $\dim  (\mathcal Z\cap \mathcal M_{S,\mathcal Z })> 1$
has been studied in Lemma \ref{dim>1}.

Observe that, for $m\in \Z\cap B_{S,\mathcal Z}\setminus \mathcal M_{S,\mathcal Z}$, the reflected line $\mathcal R_m$
is well defined and contained in $\Sigma_S(\mathcal Z)$. Therefore, in the degenerate case when 
$\dim(\Z\cap B_{S,\mathcal Z}\setminus \mathcal M_{S,\mathcal Z })\ge 1$, the surface constituted by the reflected lines 
$\mathcal R_m$ for $m\in \Z\cap B_{S,\mathcal Z}$ is contained in $\Sigma_S(\mathcal Z)$. 

\begin{proof}[Proof of Theorem \ref{lemmefondamental}]
Recall that for a generic $(A,B)$ in $(\mathbf{W}^\vee)^2$, we have $\mbox{deg}(\mathcal P_{A,B})=(d-1)(10d-9)$.
We assume first that $\dim\Sigma_{S}(\mathcal Z)<2$ (i.e. $\delta=\infty$)
and that $\# \mathcal B<\infty$,
Taking $(A,B)$ such that
$\mbox{deg}(\mathcal P_{A,B})=(d-1)(10d-9)$ and $\mathcal D_{A,B}\cap \Sigma_{S}(\mathcal Z)=\emptyset$,
we have $\mathcal P_{A,B}\cap\mathcal Z=\B$ and so
\begin{eqnarray*}
d(d-1)(10d-9)&=&\mbox{deg}(\mathcal Z)\, \mbox{deg}(\mathcal P_{A,B})
= \sum_{m\in\mathcal Z\cap \mathcal P_{A,B}} i_m(\mathcal Z,\mathcal P_{A,B})\\
&=& \sum_{m\in \B} i_m(\mathcal Z,\mathcal P_{A,B})
= \sum_{m\in \B} i_m(\mathcal Z,\mathcal P_{A,B}).
\end{eqnarray*}

Assume now that $\dim\Sigma_{S}(\mathcal Z)=2$ (i.e. that $\delta$ is finite), that
$\dim  (\mathcal Z\cap \mathcal M_{S,\mathcal Z })\le 1$, that
$\#(\Z\cap B_{S,\mathcal Z}\setminus\mathcal M_{S,\mathcal Z})<\infty$
and $\# \mathcal B<\infty$.
We consider $(A,B)\in(\mathbf W^\vee)^2$ such that:
\begin{itemize}
\item[(a)] $\mathcal D_{A,B}$ is a line containing no reflected line $\mathcal R_m=(m\, \sigma(m))$ ($m\in\Z$), 
\item[(b)] $\mbox{deg}(\mathcal P_{A,B})=(d-1)(10d-9)$ (this is generic due to Proposition \ref{degreepolar}),
\item[(c)] the points $P\in \mathcal D_{A,B}\cap \Sigma_S(\mathcal Z)$ are such that $\#[\pi_1(\Phi_{|\hat Z}^{-1}(\{P\}))\setminus\mathcal B]=\delta$
(this is generic due to Proposition \ref{blowingup}),
\item[(d)] For any $P\in \mathcal D_{A,B}\cap \Sigma_{S}(\mathcal Z)$, we have $i_P(\Sigma_{S}(\mathcal Z),
    \mathcal D_{A,B})=1$ (this is true for a generic $(A,B)$ since
$\Sigma_{S}(\mathcal Z)$ is a surface),
\item[(e)] the line $\mathcal D_{A,B}$ intersects no reflected line $\mathcal R_m$ with $m\in B_{S,\mathcal Z}$ (this is generic since $\#(\mathcal Z
\cap B_{S,\mathcal Z}\setminus \mathcal M_{S,\mathcal Z })<\infty$ and $\dim(\mathcal Z\cap\mathcal M_{S,\mathcal Z})\le 1$),
\item[(f)] for any $m\in (\mathcal P_{A,B}\cap \mathcal Z)\setminus \B$, we have 
$i_m(\mathcal Z,\mathcal P_{A,B})=1$ (this is explained at the end of this proof),
\item[(g)] $\mathcal D_{A,B}$ does not intersect $\Phi(\pi^{-1}(\mathcal Z\cap V(\beta^2-4\alpha\gamma)))$ if 
$\dim(\mathcal Z\cap V(\beta^2-4\alpha\gamma))=1$.
\end{itemize}
Due to (b), we have
\begin{eqnarray*}
d(d-1)(10d-9)&=&\mbox{deg}(\mathcal Z)\, \mbox{deg}(\mathcal P_{A,B})
= \sum_{m\in\mathcal Z\cap \mathcal P_{A,B}} i_m(\mathcal Z,\mathcal P_{A,B})\\
&=& \sum_{m\in \B} i_m(\mathcal Z,\mathcal P_{A,B})+
      \sum_{m\in(\mathcal Z\cap \mathcal P_{A,B})\setminus \B} i_m(\mathcal Z,\mathcal P_{A,B}).
\end{eqnarray*}
Now, we have

$\displaystyle\sum_{m\in(\mathcal Z\cap \mathcal P_{A,B})\setminus \B} i_m(\mathcal Z,\mathcal P_{A,B})
=\#((\mathcal Z\cap \mathcal P_{A,B})\setminus \B)\ \ \ \mbox{due to}\
 (f)$
\begin{eqnarray*}
&=&\#[\pi_1(\Phi_{|\hat Z}^{-1}(\mathcal D_{A,B}))\setminus\mathcal B]\ \ \mbox{due to}\ \ Definition \ \ref{defirefpolar}\\
&=&\delta \#(\Sigma_{S}(\mathcal Z)\cap  \mathcal D_{A,B})
\ \ \ 
\mbox{due to}\ (c)\\
&=&\delta\sum_Pi_P(\Sigma_{S}(\mathcal Z),  \mathcal D_{A,B})=\delta\, \mbox{deg}
(\Sigma_{S}(\mathcal Z))\ \ \ 
\mbox{due to}\ (d).
\end{eqnarray*}
Let us now explain why (f) is true for a generic $(A,B)\in(\mathbf W^\vee)^2$. 
Let $m\in(\mathcal P_{A,B}\cap \mathcal Z)\setminus\mathcal B$.
Due to (e), $m\in \pi_1(\Phi_{|\hat Z}^{-1}(\mathcal D_{A,B}))\setminus
B_{S,\mathcal Z}$.
We consider the cone hypersurface
$\mathcal K_\mathcal Z$ of $\mathbf{W}$ associated to $\mathcal Z$.
Since $m\in\mathcal Z\setminus B_{S,\Z}$, there exist two maps $\psi^\pm:U\rightarrow\mathbb P^3$
defined on a neighbourhood $U$ of $m$ in $\mathbb P^3$ such that, for any $m'\in U$, 
$\Phi(\pi_1^{-1}(\{m'\}))=\{\psi^-(m'),\psi^+(m')\}$. Let $\varepsilon\in\{+,-\}$
be such that $\Phi(\pi_1^{-1}(\{m\}))\cap \mathcal D_{A,B}=\{\psi^{\varepsilon}(m)\}$ ($\psi^\varepsilon$ is unique for a generic $m\in \Z$ according to (a) and to (g))
and the tangent space to $\mathcal P_{A,B}$ at $m$ is given by
$V(A\circ D\psi^\varepsilon(\mathbf{m}), B\circ D\psi^\varepsilon
   (\mathbf{m}))$,
where $D\psi^\pm(\mathbf{m})$ are the jacobian matrices of $\psi^\pm$ taken at $\mathbf{m}$.
Now, with these notations, for a generic $m$ in $\mathcal Z$, $D\psi^{\varepsilon'}(\mathbf{m})$ is invertible
if $\dim \psi^{\varepsilon'}(\mathcal Z)=2$ (if $\dim \psi^{\varepsilon'}(\mathcal Z)<2$, take $(A,B)$ such that $\mathcal D_{A,B}\cap 
\psi^{\varepsilon'}(\mathcal Z)=\emptyset$).
This combined with (d) gives the result.
\end{proof}
\section{Proof of Theorem \ref{thmGENE}}\label{proofGENE}
More precisely we prove the following (recall that $\#V(F,\Delta_{\mathbf S}F,Q(\nabla F))$ is the number of isotropic tangent planes to $\mathcal Z$ passing through $S$).
\begin{thm}\label{generique}
Let $\mathcal Z\subset\mathbb P^3$ be an irreducible smooth surface and $S\in\mathbb P^3\setminus(\mathcal Z\cup\mathcal H^\infty)$
be such that 
$\mathcal B=V(F,\Delta_{\mathbf S}F,Q(\nabla F))$ (see Corollary \ref{B}) and such that this set contains $d(d-1)(2d-2)$ points.
We assume moreover that $\mathcal B\cap V(H_FN_{\mathbf S})=\emptyset$.
Then 
$\mdeg (\Sigma_S(\mathcal Z))=d(d-1)(8d-7)$.
\end{thm}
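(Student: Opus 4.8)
The plan is to derive the formula from Theorem \ref{lemmefondamental} by showing that each of the $d(d-1)(2d-2)$ points of $\mathcal{B}=V(F,\Delta_{\mathbf{S}}F,Q(\nabla F))$ contributes exactly $1$ to the correction term $\sum_{m\in\mathcal{B}}i_m(\mathcal{Z},\mathcal{P}_{A,B})$, which gives
$$\mdeg(\Sigma_S(\mathcal{Z}))=d(d-1)(10d-9)-d(d-1)(2d-2)=d(d-1)(8d-7).$$
First I would record that the hypotheses of Theorem \ref{lemmefondamental} hold: $\#\mathcal{B}<\infty$ is assumed; $\dim(\mathcal{Z}\cap\mathcal{M}_{S,\mathcal{Z}})\le 1$ and $\#(\mathcal{Z}\cap B_{S,\mathcal{Z}}\setminus\mathcal{M}_{S,\mathcal{Z}})<\infty$ follow from the smoothness of $\mathcal{Z}$, from $S\notin\mathcal{Z}\cup\mathcal{H}^\infty$ and from the finiteness of $\mathcal{B}_0=V(F,\Delta_{\mathbf{S}}F,Q(\nabla F))$ (for instance $\mathcal{Z}\cap V(F_x,F_y,F_z)$ is finite because the Gauss map of a smooth surface of degree $\ge 2$ is finite; the case $d=1$ is trivial, $\mathcal{Z}$ being a plane and $\mathcal{B}$ empty, so assume $d\ge 2$); and the alternative $\dim(\Sigma_S(\mathcal{Z}))<2$ of Theorem \ref{lemmefondamental} is excluded \emph{a posteriori}, as it would force $0=d(d-1)(8d-7)>0$. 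By the remark following Theorem \ref{lemmefondamental} I may, for generic $(A,B)\in(\mathbf{W}^\vee)^2$, replace $i_m(\mathcal{Z},\mathcal{P}_{A,B})$ by $i_m(\mathcal{Z},V(K_1,K_2))$ with $K_1,K_2$ as in Proposition \ref{K1K2K3}; moreover for generic $(A,B)$ one has $A(\mathbf{m})\ne 0$ for every $m\in\mathcal{B}$, and since $A(\boldsymbol{\sigma}(\mathbf{m}))=0$ there, $m\notin V(A,A\circ\boldsymbol{\sigma})$, so near $m$ the reflected polar coincides with $V(K_1,K_2)$.

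Now fix $m\in\mathcal{B}=\mathcal{B}_0$ and write $\mathbf{k}:=\boldsymbol{\kappa}(\nabla F(\mathbf{m}))$. From $\Delta_{\mathbf{S}}F(\mathbf{m})=Q(\nabla F(\mathbf{m}))=0$ one gets $\boldsymbol{\sigma}(\mathbf{m})=0$ (so $m$ is a base point of $\sigma$), hence $A(\boldsymbol{\sigma}(\mathbf{m}))=B(\boldsymbol{\sigma}(\mathbf{m}))=0$ and $\alpha(\mathbf{m})=\beta(\mathbf{m})=\gamma(\mathbf{m})=0$ (for $\beta$, because $\beta(\mathbf{m})=-2\,\hess_F(\mathbf{S},\boldsymbol{\sigma}(\mathbf{m}))$). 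Differentiating $K_1$ and $K_2$ at $\mathbf{m}$ and discarding the terms killed by these vanishings, one finds
$$dK_2(\mathbf{m})=A(\mathbf{m})^2\,d\gamma(\mathbf{m}),\qquad dK_1(\mathbf{m})=C\circ D\boldsymbol{\sigma}(\mathbf{m}),\quad\text{with }\ C:=B(\mathbf{m})A-A(\mathbf{m})B.$$
Since $\gamma=-\tfrac{4\Delta_{\mathbf{S}}F}{(d-1)^2}N_{\mathbf{S}}H_F$ and $\Delta_{\mathbf{S}}F(\mathbf{m})=0$, we have $d\gamma(\mathbf{m})=-\tfrac{4N_{\mathbf{S}}(\mathbf{m})H_F(\mathbf{m})}{(d-1)^2}\,d(\Delta_{\mathbf{S}}F)(\mathbf{m})$, and $d(\Delta_{\mathbf{S}}F)(\mathbf{m})$ is the covector $\hess F(\mathbf{m})\cdot\mathbf{S}$; likewise, since $\Delta_{\mathbf{S}}F(\mathbf{m})=0$, the differential $D\boldsymbol{\sigma}(\mathbf{m})$ is the rank-$\le 2$ map $\mathbf{v}\mapsto\bigl(d(Q(\nabla F))(\mathbf{m})\,\mathbf{v}\bigr)\mathbf{S}-2\bigl(d(\Delta_{\mathbf{S}}F)(\mathbf{m})\,\mathbf{v}\bigr)\mathbf{k}$, so $dK_1(\mathbf{m})=C(\mathbf{S})\,d(Q(\nabla F))(\mathbf{m})-2C(\mathbf{k})\,d(\Delta_{\mathbf{S}}F)(\mathbf{m})$. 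Finally $d(Q(\nabla F))(\mathbf{m})=2\,\hess F(\mathbf{m})\cdot\mathbf{k}$ and, by Euler, $dF(\mathbf{m})=\tfrac1{d-1}\hess F(\mathbf{m})\cdot\mathbf{m}$. Hence $dF(\mathbf{m}),dK_1(\mathbf{m}),dK_2(\mathbf{m})$ all lie in $\hess F(\mathbf{m})\cdot\operatorname{span}(\mathbf{m},\mathbf{S},\mathbf{k})$, and --- using that $\hess F(\mathbf{m})$ is invertible (as $H_F(\mathbf{m})\ne 0$, since $\mathcal{B}\cap V(H_F)=\emptyset$), that $N_{\mathbf{S}}(\mathbf{m})\ne 0$, and that $A(\mathbf{m})\ne 0$, $C(\mathbf{S})\ne 0$ for generic $(A,B)$ --- these three covectors are linearly independent in $\mathbf{W}^\vee$ if and only if $\mathbf{m},\mathbf{S},\mathbf{k}$ are linearly independent in $\mathbf{W}$.

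The heart of the argument is thus the linear independence of $\mathbf{m},\mathbf{S},\mathbf{k}$ for $m\in\mathcal{B}$. Here $\mathbf{m},\mathbf{S}$ are independent since $S\notin\mathcal{Z}$; $\mathbf{k}\ne 0$, for otherwise $\mathcal{T}_m\mathcal{Z}=\mathcal{H}^\infty$ and then $\Delta_{\mathbf{S}}F(\mathbf{m})=t_0F_t(\mathbf{m})\ne 0$ (since $t_0\ne 0$ and smoothness forces $F_t(\mathbf{m})\ne 0$), contradicting $m\in\mathcal{B}_0$; and $\mathbf{k}\notin\operatorname{span}(\mathbf{m},\mathbf{S})$: indeed, if $\mathbf{k}=a\mathbf{m}+b\mathbf{S}$, then when $m\notin\mathcal{H}^\infty$ the point $n_{\infty,m}(\mathcal{Z})=\Pi(\mathbf{k})$ lies on the line $(mS)$, forcing $(mS)=\mathcal{N}_m(\mathcal{Z})$, which is isotropic because $Q(\nabla F(\mathbf{m}))=0$; hence $(mS)$ would be isotropic, i.e. $N_{\mathbf{S}}(\mathbf{m})=0$, contradicting $\mathcal{B}\cap V(N_{\mathbf{S}})=\emptyset$; while when $m\in\mathcal{H}^\infty$, comparing last coordinates gives $b=0$ (as $t_0\ne 0$), the Euler relation $\nabla F(\mathbf{m})\cdot\mathbf{m}=0$ gives $aQ(\mathbf{m})=0$, hence $a=0$ (as $Q(\mathbf{m})=t_0^{-2}N_{\mathbf{S}}(\mathbf{m})\ne 0$), so $\mathbf{k}=0$, again a contradiction. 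Therefore $dF(\mathbf{m}),dK_1(\mathbf{m}),dK_2(\mathbf{m})$ are linearly independent, so $V(K_1,K_2)$ is a smooth curve at $m$ meeting $\mathcal{Z}$ transversally, whence $i_m(\mathcal{Z},\mathcal{P}_{A,B})=i_m(\mathcal{Z},V(K_1,K_2))=1$; summing over $\mathcal{B}$ and substituting into Theorem \ref{lemmefondamental} yields $\mdeg(\Sigma_S(\mathcal{Z}))=d(d-1)(8d-7)$. I expect this transversality statement to be the main obstacle: one must carefully track the contributions of the terms proportional to the vanishing quantities $\alpha,\beta,\gamma,\boldsymbol{\sigma}$ to the differentials $dK_1(\mathbf{m}),dK_2(\mathbf{m})$, and one must rule out the degenerate positions of $S$ relative to the normal lines at the points of $\mathcal{B}$ --- which is exactly where the hypothesis $\mathcal{B}\cap V(H_FN_{\mathbf{S}})=\emptyset$ is used.
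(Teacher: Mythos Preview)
Your proof is correct and follows the same overall strategy as the paper: apply Theorem \ref{lemmefondamental} and show that each of the $d(d-1)(2d-2)$ points of $\mathcal{B}$ contributes $i_m(\mathcal{Z},V(K_1,K_2))=1$, which is exactly what the paper does. The one substantive difference is in how the transversality $i_m=1$ is justified: the paper picks a local parametrization $h$ of $\mathcal{Z}$, notes that the Bezout-maximal count $\#\mathcal{B}=d(d-1)(2d-2)$ forces $i_m(V(F,\Delta_{\mathbf S}F,Q(\nabla F)))=1$ (so $\Delta_{\mathbf S}F\circ h$ and $Q(\nabla F)\circ h$ have independent linear parts), and then reads off the linear parts of $K_1\circ h$ and $K_2\circ h$; you instead compute $dF,\,dK_1,\,dK_2$ intrinsically as $\hess F(\mathbf m)\cdot\mathbf m,\ \hess F(\mathbf m)\cdot(C(\mathbf S)\mathbf k-C(\mathbf k)\mathbf S),\ \hess F(\mathbf m)\cdot\mathbf S$ and prove the independence of $\mathbf m,\mathbf S,\mathbf k$ directly from the hypothesis $N_{\mathbf S}(\mathbf m)\ne 0$. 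Both arguments are valid and in fact equivalent (via the invertibility of $\hess F(\mathbf m)$, the paper's transversality is exactly your independence of $\mathbf m,\mathbf S,\mathbf k$); your version has the small advantage of making explicit where each hypothesis ($H_F\ne 0$, $N_{\mathbf S}\ne 0$, $S\notin\mathcal Z\cup\mathcal H^\infty$) enters, and of verifying the side conditions of Theorem \ref{lemmefondamental} (including the a posteriori exclusion of $\dim\Sigma_S(\mathcal Z)<2$) that the paper leaves implicit.
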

\begin{proof}
Without any loss of generality, we assume that $S[0:0:0:1]$.
Due to Theorem \ref{lemmefondamental}, we have
$\mdeg(\Sigma_{S}(\mathcal Z))=d(d-1)(10d-9) -\sum_{P\in \B}i_P(\mathcal Z,V(K_1,K_2))$
for generic $A,B\in\mathbf W^\vee$.
Since we know that $\mathcal B=V(F,\Delta_{\mathbf S}F,Q(\nabla F))$ contains $d(d-1)(2d-2)$ points, we just have to prove that
$i_P(\mathcal Z,\mathcal P_{A,B})=1$ for any $P\in\mathcal B$
(for generic $A,B\in\mathbf W^\vee$).

Let such a point $P$. 
Assume that $A(P)B(P)\ne 0$ (this is true for generic $A,B\in\mathbf W^\vee$).
Since $F$ is smooth, either $F_x(\mathbf P)F_y(\mathbf P)\ne 0$, or $F_x(\mathbf P)F_z(\mathbf P)\ne 0$
or $F_y(\mathbf P)F_z(\mathbf P)\ne 0$. Assume for example that $F_x(\mathbf P)F_y(\mathbf P)\ne 0$,
then there exists a local parametrization $h$ of $\mathcal Z$
defined on an open  neighbourhood of $(0,0)$ in $\mathbb C^2$
such that $h(0,0)=P$ and $[\partial h(u,v)/\partial u](0,0)\ne 0$.
Since $\# V(F,\Delta_{\mathbf S}F,Q(\nabla F))=d(d-1)(2d-2)$,
we know that $i_P(V(F,\Delta_{\mathbf S}F,Q(\nabla F)))=1$.
Hence we assume that
$\val_{u,v}A(\boldsymbol\sigma(h(u,v)))=
              \val_{u,v}B(\boldsymbol\sigma(h(u,v)))=1$.
(this is true for generic  $A,B\in\mathbf W^\vee$ due to the formula of $\boldsymbol \sigma$). Moreover $\Delta_{\mathbf S}F(h(u,v))$, $\alpha(h(u,v))$, $\beta(h(u,v))$ and $\gamma(h(u,v))$ have valuation 1 in $(u,v)$.
Recall that $K_1$ and $K_2$ are given by
$ K_1(\mathbf m)=A(\boldsymbol \sigma(\mathbf m))B(\mathbf m)
     -B(\boldsymbol \sigma(\mathbf m))A(\mathbf m)$ and 
$K_2(\mathbf m)=\alpha(\mathbf m)
    (A(\boldsymbol\sigma(\mathbf m)))^2
    -\beta(\mathbf m)A(\boldsymbol\sigma(\mathbf m))A(\mathbf m)+
     \gamma(\mathbf m)(A(\mathbf m))^2$.
On the one hand $K_2(h(u,v))$ has valuation 1  and its term of degree 1 
is the term of degree 1 of 

$\gamma(h(u,v)) 
(A(\mathbf P))^2=-4(A(\boldsymbol\sigma(\mathbf P)))^2\Delta_{\mathbf S}F(h(u,v))N_{\mathbf S}(P)H_F(\mathbf P)/(d-1)^2$.
On the other hand
$$K_1(h(u,v))=\Delta_{\mathbf S}F(h(u,v))[\cdots]+
        Q(\nabla F(h(u,v)))[A(\mathbf S)B(h(u,v))-B(\mathbf S)A(h(u,v))].$$
Hence, for generic $A,B\in\mathbf W^\vee$, $A(\mathbf S)B(\mathbf P)\ne B(\mathbf S)A(\mathbf P)$ and so $i_P(V(F,K_1,K_2))=1$. 
\end{proof}
\section{Degree of caustics of a paraboloid}\label{sec:paraboloid}
This section is devoted to the proof of Proposition \ref{prop:paraboloid}.
We consider again the case when $\mathcal Z$ is the 
paraboloid $V(F)$ with 
$F=(x^2+y^2-2zt)/2$. We recall that the base points have been
studied in Proposition \ref{baseparaboloide} and that we have written
$F_1[0:0:1:0]$ and $F_2[0:0:1:2]$ for the two focal points of this paraboloid. Let $S\in\mathbb P^3\setminus\{F_1,F_2\}$.
We have
$$F=(x^2+y^2-2zt)/2,\quad \Delta_{\mathbf S}F=x_0x+y_0y-z_0t-t_0z,$$
$$\boldsymbol\sigma=(x^2+y^2+t^2)\cdot\mathbf S-2\Delta_{\mathbf S}F\cdot \left(\begin{array}{c}x\\y\\-t\\0\end{array}\right), $$
$$ K_1(\mathbf m)=A(\boldsymbol \sigma(\mathbf m))B(\mathbf m)
     -B(\boldsymbol \sigma(\mathbf m))A(\mathbf m),$$
$$K_2(\mathbf m)=\alpha(\mathbf m)
    (A(\boldsymbol\sigma(\mathbf m)))^2
    -\beta(\mathbf m)A(\boldsymbol\sigma(\mathbf m))A(\mathbf m)+
     \gamma(\mathbf m)(A(\mathbf m))^2$$
$$\mbox{with}\quad
\alpha=\Delta_{\mathbf S}F,\quad \gamma=-4\Delta_{\mathbf S}FN_{\mathbf S}, $$
$$\beta=-2(x_0^2+y_0^2-2z_0t_0)(x^2+y^2+t^2)+
     4\Delta_{\mathbf S}F(t_0z+z_0t+t_0t),$$
$$\mbox{and}\quad N_{\mathbf S}=(x_0t-xt_0)^2+(y_0t-yt_0)^2+(z_0t-zt_0)^2 .$$
It will be useful to observe that $\mathcal Z$ is invariant by composition by $[x:y:z:t]\mapsto[\bar x:\bar y:\bar z:\bar t]$ and by
$[x:y:z:t]\mapsto[ax+by:-bx+ay:cz:t/c]$ with $a^2+b^2=c^2=1$.
\begin{itemize}
\item 
If $x_0=y_0=0$, then, due to Theorem \ref{thmrevolution}, 
the degree of $\Sigma_S(\mathcal Z)$ corresponds
to the degree of a caustic of the parabola.
Hence, we have $\mdeg\Sigma_S(\mathcal Z)=4$ if $S[0:0:0:1]$ and
$\mdeg\Sigma_S(\mathcal Z)=6$ elsewhere (see \cite{fredsoaz1}).
\end{itemize}
We assume now that $x_0\ne 0$.
\begin{itemize}
\item {\bf [Generic case]} If $S\not\in(\mathcal Z\cup\mathcal H^\infty)$, if $x_0^2+y_0^2\ne 0$ and if $x_0^2+y_0^2+(z_0-(t_0/2))^2\ne 0$, then Theorem \ref{generique} applies and $\mdeg \Sigma_S (\mathcal Z)=18$.
Indeed $\mathcal B=V(F,\Delta_{\mathbf S}F,Q(\nabla F))=\{C,D,m_1,m_{-1}\}$ with $C[t_0:it_0:x_0+iy_0:0]$, 
$C[t_0:-it_0:x_0-iy_0:0]$ and $m_{\varepsilon}[x_\varepsilon;y_\varepsilon:-1/2:1]$ with
$x_\varepsilon$ and $y_\varepsilon$ defined in Proposition
\ref{baseparaboloide}. Moreover  we have $N_{\mathbf S}(\mathbf C)=(x_0+iy_0)^2t_0^2\ne 0$, $N_{\mathbf S}(\mathbf D)=(x_0-iy_0)^2t_0^2\ne 0$ and $N_{\mathbf S}(\mathbf m_\varepsilon)=(x_0^2+y_0^2)+(z_0-\frac{t_0}2)^2\ne 0$.
\end{itemize}
In order to apply our Theorem \ref{lemmefondamental}, we will have to verify its assumptions
on $\mathcal B_{S,\mathcal Z}$ and $\mathcal M_{S,\mathcal Z}$.
This is the aim of the next proposition.
\begin{prop}
Let $S\in\mathbb P^3\setminus\{F_1,F_2\}$ be such that $\#
\mathcal B<\infty$. Then $\#(\mathcal Z\cap \mathcal M_{S,\mathcal Z})<\infty$. Moreover if $\hess F(\mathbf{S},\mathbf{S})\ne 0$ (i.e. if $S\not\in\mathcal Z$), then $\#(\mathcal Z\cap
\mathcal B_{S,\mathcal Z})<\infty$ and so Theorem \ref{lemmefondamental} applies.
\end{prop}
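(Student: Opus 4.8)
The plan is to read everything off the explicit descriptions $\mathcal Z\cap\mathcal M_{S,\mathcal Z}=\mathcal Z\cap(\base(\sigma)\cup\{S\}\cup\mathcal W)$ from Proposition \ref{B1} and $B_{S,\mathcal Z}=V(\Delta_{\mathbf S}F,\hess F(\mathbf S,\mathbf S)\cdot Q(\nabla F))$ from Remark \ref{rqeBSZ}, exploiting that $F=(x^2+y^2-2zt)/2$ is a quadric. First I would record the relevant data: $\nabla F=(x,y,-t,-z)$, hence $Q(\nabla F)=x^2+y^2+t^2$, $\boldsymbol\kappa(\nabla F)=(x,y,-t,0)$, $H_F=-1$, and $\hess F(\mathbf S,\mathbf S)=x_0^2+y_0^2-2z_0t_0=2F(\mathbf S)$ is a constant; in particular $\mathcal Z$ is smooth and $\hess F(\mathbf S,\mathbf S)\ne 0$ is exactly the condition $S\notin\mathcal Z$. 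I would also note that $\mathcal B_0:=V(F,\Delta_{\mathbf S}F,Q(\nabla F))\subseteq\mathcal B$ by Corollary \ref{B}, so $\#\mathcal B_0<\infty$ under the standing hypothesis $\#\mathcal B<\infty$.

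For the finiteness of $\mathcal Z\cap\mathcal M_{S,\mathcal Z}$, I would bound the three pieces appearing in Proposition \ref{B1}. The term $\{S\}$ contributes one point. By (the proof of) Proposition \ref{baseR}, $\mathcal Z\cap\base(\sigma)=\big(\mathcal Z\cap V(F_x,F_y,F_z)\big)\cup\big(\mathcal Z\cap V(Q(\nabla F),\Delta_{\mathbf S}F)\big)$; here $V(F_x,F_y,F_z)=V(x,y,t)=\{F_1\}$, while $\mathcal Z\cap V(Q(\nabla F),\Delta_{\mathbf S}F)=\mathcal B_0$ is finite. For $\mathcal W$, the condition $m=n_{\infty,m}(\mathcal Z)$ reads $[x:y:z:t]=[x:y:-t:0]$, which forces $t=0$ and then $z=0$; intersecting with $\mathcal Z$ gives $x^2+y^2=0$, so $\mathcal W\subseteq\{[1:i:0:0],[1:-i:0:0]\}$, the candidate $[0:0:1:0]$ being excluded by the clause $Q(m)=0$ in the definition of $\mathcal W$ (there $Q(m)=1$). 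Altogether $\mathcal Z\cap\mathcal M_{S,\mathcal Z}\subseteq\mathcal B_0\cup\{S,F_1,[1:i:0:0],[1:-i:0:0]\}$ is finite.

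For the second assertion, assume $\hess F(\mathbf S,\mathbf S)\ne 0$. Since this quantity is a nonzero constant, Remark \ref{rqeBSZ} gives $B_{S,\mathcal Z}=V(\Delta_{\mathbf S}F,Q(\nabla F))$, whence $\mathcal Z\cap B_{S,\mathcal Z}=V(F,\Delta_{\mathbf S}F,Q(\nabla F))=\mathcal B_0$ is finite. To conclude that Theorem \ref{lemmefondamental} applies, I would check its hypotheses: $\#\mathcal B<\infty$ is assumed; if $\dim\Sigma_S(\mathcal Z)<2$ nothing more is required, and if $\dim\Sigma_S(\mathcal Z)=2$ we have just shown $\dim(\mathcal Z\cap\mathcal M_{S,\mathcal Z})=0\le 1$ and $\#(\mathcal Z\cap B_{S,\mathcal Z}\setminus\mathcal M_{S,\mathcal Z})<\infty$, which are exactly the remaining hypotheses.

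There is no genuine obstacle here: every step is a short explicit computation specific to the quadric $F$. The one point deserving attention is the description of $\mathcal W$: one must use all three defining conditions ($m=n_{\infty,m}(\mathcal Z)$, $Q(m)=0$, $\Delta_{\mathbf S}F(m)\ne 0$) to see that it reduces to at most the two cyclic points $[1:\pm i:0:0]$ of $\mathcal Z_\infty$ rather than to a curve, since it is precisely a positive-dimensional $\mathcal Z\cap\mathcal M_{S,\mathcal Z}$ (or $\mathcal Z\cap B_{S,\mathcal Z}$, which is what happens when $S\in\mathcal Z$) that would prevent Theorem \ref{lemmefondamental} from applying.
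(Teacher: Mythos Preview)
Your proof is correct and follows essentially the same route as the paper's own proof: you use Proposition \ref{B1} to decompose $\mathcal Z\cap\mathcal M_{S,\mathcal Z}$ into $\mathcal B_0$, $\{S\}$, $\mathcal Z\cap V(F_x,F_y,F_z)=\{F_1\}$, and $\mathcal W\subseteq\{[1:\pm i:0:0]\}$, and then use Remark \ref{rqeBSZ} together with the fact that $\hess F(\mathbf S,\mathbf S)$ is a nonzero constant to reduce $\mathcal Z\cap B_{S,\mathcal Z}$ to $\mathcal B_0$. Your write-up is in fact more explicit than the paper's (you spell out the invocations of Corollary \ref{B} and the hypotheses of Theorem \ref{lemmefondamental}), but the argument is the same.
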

\begin{proof}
Let us prove that
$\#(\mathcal Z\cap \mathcal M_{S,\mathcal Z})<\infty$. Since $\#
\mathcal B<\infty$, we already know that $\# V(F,\Delta_{\mathbf{S}}F,Q(\nabla F))<\infty$. Moreover we have
$V(F,F_x,F_y,F_z)=\{[0:0:1:0]\}$ and $\mathcal W=
\{[1:\pm i:0:0]\}$ (indeed for $m\in\mathcal W$, we have
$(x,y)\ne 0$, so $z=t$  and $-z^2=x^2+y^2=2zt=2z^2$).

If $\hess F(\mathbf{S},\mathbf{S})\ne 0$, then  $\mathcal Z\cap\mathcal B_{S,\mathcal Z}=V(F,\Delta_{\mathbf{S}}F,Q(\nabla F))\subseteq \mathcal B$ which is finite.
\end{proof}

\begin{itemize}
\item 
Let $S\not\in(\mathcal Z\cup\mathcal H^\infty)$ such that $x_0^2+y_0^2\ne 0$ and $x_0^2+y_0^2+(z_0-(t_0/2))^2= 0$.
Without loss of generality we assume that $x_0=0$ and $y_0=1$
and $z_0-(t_0/2)=i$. The fact that $S\not\in\mathcal Z$ implies
that $t_0\ne -i$.
We have $\mathcal B=V(F,\Delta_{\mathbf S}F,Q(\nabla F))=
\{C,D,E\}$ with $C[t_0:it_0:i:0]$, $D[t_0:-it_0:-i:0]$ and $E[0:i:-\frac 12:1]$. 

Around $E$, we parametrize $\mathcal Z$ by $h(x,y)=(x,i+y,\frac{x^2+(i+y)^2}{2},1)$. We have
$\Delta_{\mathbf S}F\circ h(x,y)=y(1-it_0)-t_0\frac{x^2+y^2}2$ and
$Q(\nabla F)\circ h(x,y)=2iy+x^2+y^2$.
Hence $i_E(\mathcal Z,V(\Delta_{\mathbf S}F,Q(\nabla F)))=2$
and so $i_C(\mathcal Z,V(\Delta_{\mathbf S}F,Q(\nabla F)))=1$
and $i_D(\mathcal Z,V(\Delta_{\mathbf S}F,Q(\nabla F)))=1$.
Since moreover $N_{\mathbf S}(\mathbf C)=(x_0+iy_0)^2t_0^2\ne 0$ and $N_{\mathbf S}(\mathbf D)=(x_0-iy_0)^2t_0^2\ne 0$, due to the proof of Theorem \ref{generique}, we have
$i_C(\mathcal Z,V(\Delta_{\mathbf S}F,Q(\nabla F)))=i_D(\mathcal Z,V(\Delta_{\mathbf S}F,Q(\nabla F)))=1$.
Observe that for a generic $A\in\mathbf W^\vee$, $A\circ\boldsymbol{\sigma}\circ h$ has valuation 1 with dominating term
$2[a_2y(i-1+it_0)+a_3y(1-t_0i)+ia_4yt_0]$.
Hence $\val_{x,y}K_1\circ h(x,y)=1$ and its dominating term is proportional to $y$.
Using the fact that $z_0+\frac {t_0}2=i+t_0=i(1-it_0)$, we have
$N_{\mathbf S}\circ h(x,y)=-it_0x^2-it_0(i+t_0)y^2+...$, $\val_{x,y}\alpha\circ h(x,y)=1$, $\val_{x,y}\gamma\circ h(x,y)=3$.
Moreover $x_0^2+y_0^2-2z_0t_0=(1-it_0)^2$ so
$\beta\circ h(x,y)=-2x^2(1-it_0)-2y^2(1-it_0)^2+... $.
Therefore $\val_{x,y}K_2\circ h(x,y)=3$ and we conclude that
$i_{E}(\mathcal Z,V(K_1,K_2))=3$ and that
$\mdeg \Sigma_S(\mathcal Z)=22-1-1-3=17$.
\item 
If $S\not\in(\mathcal Z\cup\mathcal H^\infty)$, if $x_0^2+y_0^2=0$
($x_0\ne 0$) and $z_0\ne t_0/2$. We assume without loss of generality that $x_0=1$ and $y_0=i$. 
$\mathcal B=V(F,\Delta_{\mathbf S}F,Q(\nabla F))=\{C,D,E\}$
with $C[1:i:0:0]$, $D[t_0:-it_0:2:0]$ and $E[\frac{z_0-(t_0/2)}{2x_0}:\frac{z_0-(t_0/2)}{2y_0}:-\frac 12:1]$.

We use the parametrization $h(z,t)=(1,i\sqrt{1-2zt},z,t)$ at a neighbourhood of $\mathcal Z$ around $C$. We have
$\Delta_{\mathbf S}F(h(z,t))=-z_0t-t_0z+(1-\sqrt{1-2zt})$ and $Q(\nabla F)(h(z,t))=(2z+t)t$ and so $i_C(\mathcal Z,V(\Delta_{\mathbf S}F,Q(\nabla F)))=2$ and so the intersection
numbers of $\mathcal Z$ with $V(\Delta_{\mathbf S}F,Q(\nabla F))$
is equal to 1 at $D$ and $E$. Since $N_{\mathbf S}(\mathbf D)=
    (x_0-iy_0)^2t_0^2\ne 0$ and $N_{\mathbf S}(\mathbf E)=
    (z_0-\frac{t_0}2)^2\ne 0$.
Hence, due to the Proof of Theorem \ref{generique}, we have
$ i_D(\mathcal Z,V(\Delta_{\mathbf S}F,Q(\nabla F)))=i_E(\mathcal Z,V(\Delta_{\mathbf S}F,Q(\nabla F)))=1$.
It remains to estimate $i_C(\mathcal Z,V(\Delta_{\mathbf S}F,Q(\nabla F)))$. We have
$$\boldsymbol\sigma\circ h(z,t)=
\left(\begin{array}{c}t^2+2(z_0t+t_0z+\sqrt{1-2zt}-1-zt)\\2(z_0t+t_0z+\sqrt{1-2zt}-1)i\sqrt{1-2zt}+2izt+it^2\\
2(z_0+t_0)tz-z_0t^2-2(\sqrt{1-2zt}-1)t\\(2z+t)t_0t\end{array}\right).$$
Hence, for generic $A,B\in\mathbf W^\vee$, $K_1\circ h(z,t)$
has valuation 2 with dominating terms
$$[z_0(2z+t)^2+(t_0-2z_0)2z(z+t)]((a_1+ia_2)b_3-(b_1+ib_2)a_3)+ $$
$$+t^2(t_0-2z_0)[(b_1+ib_2)a_4-b_4(a_1+ia_2)]. $$
Moreover
$\val_{z,t} \alpha\circ h(z,t)=1$, $\val_{z,t}\beta\circ h(z,t)=2$ and  $\val_{z,t}\gamma\circ h(z,t)=2$. Hence $K_2\circ h(z,t)=
(4(z_0t-t_0z)^2-8(z_0-2t_0)zt)(z_0t+t_0z) (a_1+ia_2)^2 +...$
for a generic $A\in\mathbf W^\vee$, 
so $i_C(\mathcal Z,V(K_1,K_2))=6$
and $\mdeg \Sigma_S(\mathcal Z)=22-1-1-6=14$.
\item If $S\not\in(\mathcal Z\cup\mathcal H^\infty)$, if $x_0^2+y_0^2=0$
($x_0\ne 0$) and $z_0= t_0/2$. We assume without loss of generality that $x_0=1$ and $y_0=i$. 
$\mathcal B=V(F,\Delta_{\mathbf S}F,Q(\nabla F))=\{C,D\}$
with $C[1:i:0:0]$, $D[t_0:-it_0:2:0]$.
We use again the parametrization $h(z,t)=(1,i\sqrt{1-2zt},z,t)$ at a neighbourhood of $\mathcal Z$ around $C$. We observe that
$i_C(\mathcal Z,V(\Delta_{\mathbf S}F,Q(\nabla F)))=3$
and so $i_D(\mathcal Z,V(\Delta_{\mathbf S}F,Q(\nabla F)))=1$.
Moreover $N_{\mathbf S}(\mathbf D)=
    (x_0-iy_0)^2t_0^2\ne 0$. Hence, due to the proof of Theorem
\ref{generique}, we have
$ i_D(\mathcal Z,V(K_1,K_2))=1.$
Moreover,  we prove that
$ i_C(\mathcal Z,V(K_1,K_2))=9$ (probranches of $K_1\circ h$
and $K_2\circ h$ have intersection number 3/2)
and so $\mdeg \Sigma_S(\mathcal Z)=22-1-9=12$.
\end{itemize}

\begin{prop}
Let $S\in\mathcal Z\setminus (\mathcal D\cup \mathcal H^\infty)$. Then
$\mdeg \Sigma_S(\mathcal Z)=12$ if $x_0^2+y_0^2+t_0^2=0$,
$\mdeg \Sigma_S(\mathcal Z)=14$ if $x_0^2+y_0^2=0$ and
$\mdeg \Sigma_S(\mathcal Z)=16$ otherwise.
\end{prop}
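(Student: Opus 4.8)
The plan is to run Theorem \ref{lemmefondamental} with $d=2$, for which $d(d-1)(10d-9)=22$, after disposing of a degeneracy forced by $S\in\mathcal Z$. Since $\mathcal Z$ is a quadric, the Euler identities give $\hess_F(\mathbf S,\mathbf S)=2F(\mathbf S)=0$, so by Remark \ref{rqeBSZ} we have $B_{S,\mathcal Z}=V(\Delta_{\mathbf S}F)$, a plane, whence $\dim(\mathcal Z\cap B_{S,\mathcal Z})=1$ and the hypothesis of Theorem \ref{lemmefondamental} on $B_{S,\mathcal Z}$ is not literally satisfied. The point to make is that $\mathcal Z\cap B_{S,\mathcal Z}=\mathcal Z\cap V(\Delta_{\mathbf S}F)=\mathcal Z\cap\mathcal T_S\mathcal Z$ is the pair of rulings $\ell_1\cup\ell_2$ of the quadric through $S$, and that for $m\in\ell_i$ one has $\ell_i\subseteq\mathcal T_m\mathcal Z\ni S$, hence $\sigma(m)=S$ and the reflected line $\mathcal R_m$ is $\ell_i$ itself; so the reflected lines attached to $\mathcal Z\cap B_{S,\mathcal Z}$ sweep out only the curve $\ell_1\cup\ell_2$ and add no $2$-dimensional component to $\Sigma_S(\mathcal Z)$. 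Re-running the proof of Theorem \ref{lemmefondamental} with a generic line $\mathcal D_{A,B}$ that misses $\ell_1\cup\ell_2$ (this is all that is needed in place of condition (e) of that proof) keeps the formula $\mdeg\Sigma_S(\mathcal Z)=22-\sum_{m\in\mathcal B}i_m(\mathcal Z,\mathcal P_{A,B})$ valid. By Proposition \ref{baseparaboloide}, for $S\in\mathcal Z\setminus(\mathcal D\cup\mathcal H^\infty)$ one has $\mathcal B=V(F,\Delta_{\mathbf S}F,Q(\nabla F))\cup\{S\}$, and since $\Delta_{\mathbf S}F$ vanishes on $\mathcal T_S\mathcal Z$, every point of $\mathcal B$ lies on $\ell_1\cup\ell_2$.

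It then remains to evaluate $i_m(\mathcal Z,\mathcal P_{A,B})$ for $m\in\mathcal B$, which I would do with two identities valid on $\mathcal Z$. First, the explicit $\alpha,\beta,\gamma$ of the paraboloid together with $x_0^2+y_0^2-2z_0t_0=0$ give
\[Q_{\mathbf S,F}(\mathbf m,\lambda_0,\lambda_1)=\Delta_{\mathbf S}F\cdot\bigl(\lambda_0^2+4(t_0z+z_0t+t_0t)\lambda_0\lambda_1-4N_{\mathbf S}\lambda_1^2\bigr),\]
so that $\Delta_{\mathbf S}F$ divides each of $\alpha,\beta,\gamma$ and hence $K_2,K_3$; second, from $\boldsymbol\sigma=Q(\nabla F)\cdot\mathbf S-2\Delta_{\mathbf S}F\cdot\boldsymbol\kappa(\nabla F)$ one gets $K_1=Q(\nabla F)\cdot L_1-2\Delta_{\mathbf S}F\cdot L_2$ with $L_1(\mathbf m)=A(\mathbf S)B(\mathbf m)-A(\mathbf m)B(\mathbf S)$ linear and $L_2$ of degree $2$ (here $K_1,K_2,K_3$ are as in Proposition \ref{K1K2K3}). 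At an ordinary base point $m\in V(F,\Delta_{\mathbf S}F,Q(\nabla F))$ — a simple point of that zero-dimensional scheme, distinct from $S$ and not a cyclic point of $\mathcal Z$, lying on a single ruling $\ell_j$ and with $\boldsymbol\sigma(\mathbf m)=0$ — these identities together with $N_{\mathbf S}(\mathbf m)\ne0$ reduce the intersection number to $i_m(\mathcal Z,\mathcal P_{A,B})=i_m(\ell_j,V(K_1))=\val_m\bigl(Q(\nabla F)|_{\ell_j}\bigr)=1$. At $m=S$ (the transverse crossing point of $\ell_1,\ell_2$, with $\boldsymbol\sigma(\mathbf S)=Q(\nabla F(\mathbf S))\cdot\mathbf S$ and $Q(\nabla F(\mathbf S))=x_0^2+y_0^2+t_0^2$) the same analysis gives $i_S(\mathcal Z,\mathcal P_{A,B})=\val_S(K_1|_{\ell_1})+\val_S(K_1|_{\ell_2})=2$ as long as the second factor above does not vanish at the corresponding ratio, i.e. as long as $x_0^2+y_0^2+t_0^2\ne0$. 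The two non-generic contributions come from: a cyclic point $I$ of $\mathcal Z$ (one of $I,J$) appearing among the points at infinity of $V(F,\Delta_{\mathbf S}F,Q(\nabla F))$ when $x_0^2+y_0^2=0$ — there $\ell_j$ is tangent to $V(Q(\nabla F))$ at $I$, $N_{\mathbf S}(\mathbf I)=0$, and $\mathcal T_I\mathcal Z$ is isotropic — and, when $x_0^2+y_0^2+t_0^2=0$, the coincidence $m_1=m_{-1}=S$ (the relation $x_0^2+y_0^2=2z_0t_0$ then forces $2z_0+t_0=0$, and the two finite base points of Proposition \ref{baseparaboloide} both collapse onto $S$), making $S$ a base point of higher multiplicity. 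In either case the relevant $i_m$ must be obtained from an explicit local parametrization $h$ of $\mathcal Z$ at the point and an analysis of the valuations and pro-branches of $K_1\circ h$ and $K_2\circ h$, exactly in the style of the bullet-point computations already carried out in this section; I expect $i_I(\mathcal Z,\mathcal P_{A,B})=4$ and $i_S(\mathcal Z,\mathcal P_{A,B})=8$.

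Granting those two local evaluations, the proof finishes by listing $\mathcal B$ in each case (via Proposition \ref{baseparaboloide}) and summing. If $x_0^2+y_0^2\ne0$ and $x_0^2+y_0^2+t_0^2\ne0$, then $\mathcal B=\{C,D,m_1,m_{-1},S\}$, the first four ordinary base points contributing $1$ each and $S$ contributing $2$, so $\mdeg\Sigma_S(\mathcal Z)=22-6=16$. If $x_0^2+y_0^2=0$ (hence $x_0^2+y_0^2+t_0^2=t_0^2\ne0$), then $\mathcal B=\{I,D,E,S\}$ with contributions $4,1,1,2$, so $\mdeg\Sigma_S(\mathcal Z)=22-8=14$. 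If $x_0^2+y_0^2+t_0^2=0$ (hence $x_0^2+y_0^2=-t_0^2\ne0$ and $m_1=m_{-1}=S$), then $\mathcal B=\{C,D,S\}$ with contributions $1,1,8$, so $\mdeg\Sigma_S(\mathcal Z)=22-10=12$. The main obstacle will be precisely those two exceptional local intersection-number computations — $i_I=4$ at the cyclic point when $x_0^2+y_0^2=0$, and $i_S=8$ when the tangent plane to $\mathcal Z$ at $S$ is isotropic — which require writing $h$ explicitly, disentangling the contributions of the two rulings meeting at the point from that of the extra factor $\Delta_{\mathbf S}F$ dividing $K_2$, and handling half-integer pro-branch intersection numbers as in the already-treated cases $S\notin\mathcal Z$; a subsidiary task is the careful verification that the factor $\Delta_{\mathbf S}F$ of $Q_{\mathbf S,F}$ contributes only the pair of lines $\ell_1\cup\ell_2$ to $\Sigma_S(\mathcal Z)$, so that the formula of Theorem \ref{lemmefondamental} may be used in the stated form.
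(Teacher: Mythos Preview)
Your route is different from the paper's and is essentially sound. The paper does not patch Theorem \ref{lemmefondamental}; instead it removes the degeneracy at the source: since $\Delta_{\mathbf S}F$ divides $\alpha,\beta,\gamma$, it replaces $Q_{\mathbf S,F}$ by $\tilde Q_{\mathbf S,F}=Q_{\mathbf S,F}/\Delta_{\mathbf S}F$, with $\tilde\alpha=1$, $\tilde\beta=4(t_0z+z_0t+t_0t)$, $\tilde\gamma=-4N_{\mathbf S}$. Then $\tilde{\mathcal B}_{S,\mathcal Z}=\emptyset$, the reflected polar built from $\tilde Q$ has degree $8$ (so $d\cdot 8=16$), and the new base set $\tilde{\mathcal B}$ is contained in $\{S\}\cup V(F,\Delta_{\mathbf S}F,Q(\nabla F),N_{\mathbf S})$. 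Consequently in the generic case there is nothing to subtract ($\tilde K_2(\mathbf S)=-3(x_0^2+y_0^2+t_0^2)^2A(\mathbf S)^2\ne0$), while the two special cases cost only $i_S=4$ (when $x_0^2+y_0^2+t_0^2=0$) and $i_C=2$ (when $x_0^2+y_0^2=0$), giving $16,12,14$.

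Your approach keeps the original $Q_{\mathbf S,F}$, so the two rulings $\ell_1,\ell_2$ through $S$ appear as extra components $\ell_i\times\mathbb P^1$ of $\hat Z$; your observation that they only sweep out the curve $\ell_1\cup\ell_2$ is exactly what makes condition (e) of Theorem \ref{lemmefondamental} harmless for generic $\mathcal D_{A,B}$. The price is that you must subtract $6$ more than the paper in each case, coming from the factor $\Delta_{\mathbf S}F\mid K_2$; your reductions $i_m(\mathcal Z,\mathcal P_{A,B})=i_m(\ell_j,V(K_1))$ at ordinary base points and $i_S=2$ at $S$ (generic) are correct, using $\tilde K_2(m)\ne0$ there. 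The two ``expected'' numbers you flag are also correct and not hard once written out: at $I$ (with $h(z,t)=(1,i\sqrt{1-2zt},z,t)$) one has $\Delta_{\mathbf S}F\circ h=z\cdot(\text{unit})$, so $K_2\circ h$ has tangent cone proportional to $zt$ while $K_1\circ h$ has a generic quadratic tangent cone, giving $i_I=4$; at $S$ when $x_0^2+y_0^2+t_0^2=0$ one has $\Delta_{\mathbf S}F\circ h=-(x^2+y^2)/2$ and $\tilde K_2\circ h$ with leading term $16(xx_0+yy_0)^2$, so $K_2\circ h$ has tangent cone $(x^2+y^2)(xx_0+yy_0)^2$ sharing no factor with the generic quadratic tangent cone of $K_1\circ h$, whence $i_S=2\cdot4=8$. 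Both approaches land on the same arithmetic; the paper's division by $\Delta_{\mathbf S}F$ simply front-loads the simplification and shrinks every local computation.
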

\begin{proof}
Observe that $\Delta_{\mathbf S}F$ divides $\alpha$, $\beta$
and $\gamma$. In this case we define $\Sigma_S(\mathcal Z)$
by replacing $Q_{\mathbf S,F}$ by $\tilde Q_{\mathbf S,F}:=Q/\Delta_{\mathbf S}F$,
we define analogously $\tilde\alpha:=1$, $\tilde\beta:=4(t_0z+z_0t+t_0t)$ and $\tilde\gamma:=-4N_{\mathbf S}$. Following
our argument above, we define $\tilde {\mathcal B}_{S,\mathcal Z}:=V(\tilde\alpha,\tilde\beta,\tilde\gamma)=\emptyset$ and
the new reflected polar curve $\tilde {\mathcal P}_{A,B}=V(K_1,\tilde K_2,\tilde K_3)$ by using $\tilde Q_{\mathbf S,F}$ instead of $Q_{\mathbf S,F}$. Following the proof of 
Proposition \ref{degreepolar}, we obtain that $\deg \mathcal P_{A,B}=\deg K_1\deg \tilde K_2-2\deg A\deg (A\circ\sigma)=8$
and the corresponding set of base points $\tilde{\mathcal B}$ is contained in $(\{S\}\cap\mathcal Z)\cup V(F,\Delta_SF,Q(\nabla F),N_{\mathbf S}))$ (recall that $\mathcal B=(\{S\}\cap\mathcal Z)\cup V(F,\Delta_SF,Q(\nabla F)))$. 
\begin{itemize}
\item 
If $S\in \mathcal Z\setminus\mathcal H^\infty$,
$x_0^2+y_0^2\ne 0$ and $x_0^2+y_0^2+t_0^2\ne 0$
(so $z_0+\frac{t_0}2\ne 0$), then
$V(F,\Delta_{\mathbf S}F,Q(\nabla F))=\{C,D,m_1,m_{-1}\}$ 
with $C[t_0:-it_0:x_0-iy_0:0]$, $D[t_0:it_0:x_0+iy_0:0]$
and $m_\varepsilon[x_\varepsilon:y_{\varepsilon}:-\frac 12:1]$
with $x_\varepsilon:=\frac{x_0}{2z_0t_0}(z_0-\frac{t_0}2)+i\varepsilon \frac{y_0}{2t_0z_0}(z_0+\frac {t_0}2)$ and $y_\varepsilon:=\frac{y_0}{2z_0t_0}(z_0-\frac{t_0}2)-i\varepsilon \frac{x_0}{2t_0z_0}(z_0+\frac {t_0}2)$.
Hence the intersection number of $\mathcal Z$ with
$V(\Delta_{\mathbf S}F,Q(\nabla F))$ is 1 at these four points.
We observe that $N_{\mathbf S}(\mathbf C)=t_0^2(x_0+iy_0)^2\ne 
0$, $N_{\mathbf S}(\mathbf D)=t_0^2(x_0-iy_0)^2\ne 0$
and $N_{\mathbf S}(\mathbf m_\varepsilon)=(z_0+\frac {t_0}2)\ne 0$. Hence
$i_C(V(F,K_1,\tilde K_2))=i_D(V(F,K_1,\tilde K_2))=i_{m_\varepsilon}(V(F, K_1,\tilde K_2))=0$.

It remains to compute $i_S(V(F, K_1,\tilde K_2))$. 
We have $\sigma(S)=(x_0^2+y_0^2+t_0^2)\cdot S$ and $\tilde K_2(S)=
-3(x_0^2+y_0^2+t_0^2)^2(A(S))^2\ne 0$.
We conclude that
$\mdeg\Sigma_S(\mathcal Z)=2\times 8=16$. 

\item If $S\in \mathcal Z\setminus\mathcal H^\infty$ and $x_0^2+y_0^2+t_0^2= 0$, then $z_0=-t_0/2$
and so
$\mathcal B= V(F,\Delta_{\mathbf S}F,Q(\nabla F))=\{S,C,D\}$
with $C[t_0:it_0:x_0+iy_0:0]$ and $D[t_0:-it_0:x_0-iy_0:0]$.
Assume $t_0=1$.

Observe that $N_{\mathbf S}(\mathbf C)\ne 0$ and so
that $i_C(V(F, K_1,\tilde K_2))=0$.
Analogously we have $i_D(V(F, K_1,\tilde K_2))=0$.
Using the parametrization $h(x,y)=(x_0+x,y_0+y,\frac{(x_0+x)^2+(y_0+y)^2}2,1)$ of $\mathcal Z$
around $S$ and the fact that
$\tilde\alpha=1$, $\tilde\beta\circ h(x,y)=4(xx_0+yy_0+\frac{x^2+y^2}2)$,  that $\tilde\gamma\circ h(x,y)=-4[x^2+y^2+(xx_0+yy_0+(x^2+y^2)/2)^2]$.
Moreover 
$$\boldsymbol\sigma\circ h(x,y)=
(2xx_0+2yy_0+x^2+y^2)\cdot \mathbf S+\frac{x^2+y^2}2\cdot\left(
     \begin{array}{c}x_0+x\\y_0+y\\-1\\0\end{array}\right) .$$
Hence $\tilde K_1\circ h(x,y)=\frac{x^2+y^2}2[(a_1x_0+a_2y_0-a_3)B(S)
         -(b_1x_0+b_2y_0-b_3)A(S)]$
and $\tilde K_2:=16(xx_0+yy_0)^2(A(S))^2$. 
So $i_S(V(F,\tilde K_1,\tilde K_2))=4$ and 
$\mdeg\Sigma_S(\mathcal Z)=2\times 8-4=12$.
\item If $S\in \mathcal Z\setminus\mathcal H^\infty$ and  $x_0^2+y_0^2= 0$, then 
we assume without loss of generality that $x_0=1$ and $y_0=i$ (so
$z_0=0$). We have $\mathcal B=\{
S,C,D,E\}$ with $C[1:i:0:0]$, $D[t_0:-it_0:2:0]$
and $E[-\frac{t_0}{4x_0}:-\frac{t_0}{4y_0}:-\frac 12:1]$.
We have $N_{\mathbf S}(\mathbf D)=4t_0^2$ and
$N_{\mathbf S}(\mathbf E)=5t_0^2/4$ and so
$i_D(V(F, K_1,\tilde K_2))=i_E(V(F, K_1,\tilde K_2))=0$.
Observe that $\Delta_{\mathbf S}F(\mathbf S)=0$, $N_{\mathbf S}(\mathbf S)=0$, 
$\boldsymbol\sigma(\mathbf S)=t_0^2\cdot\mathbf S$ and
$\tilde\beta(\mathbf S)=4t_0^2\ne 0$. So $\tilde K_2(\mathbf S)\ne 0$ and $i_S(\mathcal Z,V(K_1,\tilde K_2))=0$.

Around $C$, we parametrize $\mathcal Z$ by
$h(z,t)=(1,i\sqrt{1-2zt},z,t)$.
We have $\Delta_{\mathbf S}F\circ h(z,t)=1-\sqrt{1-2zt}-t_0z$ and
$$\boldsymbol\sigma\circ h(z,t)= 
  \left(\begin{array}{c}2t_0z+2zt+t^2+2(\sqrt{1-2zt}-1)\\
2it_0z\sqrt{1-2zt}+2izt+it^2+2i(\sqrt{1-2zt}-1)\sqrt{1-2zt}
\\2t_0tz-2t(\sqrt{1-2zt}-1)\\(2z+t)t_0t\end{array}\right).$$
Hence $A\circ \boldsymbol\sigma\circ h(z,t)$ has valuation 1.
Moreover $\tilde\alpha=1$, $\tilde\beta\circ h(z,t)=4(t_0z+t_0t)$
and $\tilde\gamma\circ h(z,t)=8t_0t+...$. Hence, for a generic
$A\in\mathbf W^\vee$, $\tilde K_2\circ h(z,t)=8t_0tA(C)^2+...$.
Moreover we have
$$ K_1\circ h(z,t)=2t_0\{(a_3tz+a_4t^2-a_3z^2)
(b_1+ib_2)-(b_3tz+b_4t^2-b_3z^2)(a_1+ia_2)\}+...$$
Hence $i_C(\mathcal Z,V( K_1,\tilde K_2))=2$ and so $\mdeg \Sigma_S(\mathcal Z)=16-2=14$.
\end{itemize}
\end{proof}
We assume now that $S$ is at infinity. In this case $N_{\mathbf S}=(x_0^2+y_0^2+z_0^2)t^2$.
\begin{itemize}
\item 
If $S\in\mathcal H^\infty\setminus(\mathcal Z\cup \mathcal C_\infty)$, then $x_0^2+y_0^2\ne 0$ and
$\mathcal B=\{F_1,m_1,m_{-1}\}$ with
$m_\varepsilon[x_0z_0+i\varepsilon y_0\sqrt{x_0^2+y_0^2+z_0^2}:y_0z_0-i\varepsilon x_0\sqrt{x_0^2+y_0^2+z_0^2}:-
\frac{x_0^2+y_0^2}2:x_0^2+y_0^2]$. 
Then we will prove that the intersection number of $V( F,K_1,K_2)$ is 8
at $F_1$ and 1 at the two other base points and so
 $\mdeg\Sigma_{S}(\mathcal Z)=22-8-1-1=12$.

Due to the proof of Theorem \ref{generique}, since $N_{\mathbf S}(m_\varepsilon)\ne 0$, to prove that 
$i_{m_\varepsilon}(\mathcal Z,V(K_1,K_2))=1$ it is enough to prove
that $i_{m_\varepsilon}(V(F,\Delta_{\mathbf S}F,Q(\nabla F))=1$. 
To see this, we use the parametrization $h(x,y)=(x_1+x,y_1+y,((x_1+x)^2+(y_1+y)^2)/2,1)$ of $\mathcal Z$
 around $m_\varepsilon=[x_1:y_1:-1/2:1]$. The terms of valuation 1
of $\Delta_{\mathbf S}F\circ h$ and $Q(\nabla F)\circ h$ are respectively $x_0x+y_0y$
and $2(xx_1+yy_1)$ which are not proportional since $x_0^2+y_0^2\ne 0$.

For $F_1$, we use the parametrization $h(x,y)=(x,y,1,(x^2+y^2)/2)$
of $\mathcal Z$ around $F_1$. We observe that, for generic $A,B\in\mathbf W^\vee$, $A\circ\boldsymbol\sigma\circ h$ and $B\circ\boldsymbol\sigma\circ h$ have valuation 2 with respective dominating terms:
$$\theta_A:=a_1[-x_0x^2+x_0y^2-2y_0xy]+a_2[-y_0y^2+y_0x^2-2x_0xy]+a_3(x^2+y^2)z_0,$$
$$\theta_B:=b_1[-x_0x^2+y_0y^2-2y_0xy]+b_2[-y_0y^2+x_0x^2-2x_0xy]+b_3(x^2+y^2)z_0.$$
Therefore the lowest degree terms of $K_1\circ h$ are given by
$$(b_3a_1-a_3b_1)[-x_0x^2+x_0y^2-2y_0xy]+(b_3a_2-a_3b_2)[-y_0y^2+y_0x^2-2x_0xy].$$
Moreover the valuations of $\alpha\circ h$,
$\beta\circ h$ and $\gamma\circ h$ are respectively 1, 2 and 5.
Hence $K_2\circ h$ has valuation 4 and its dominating term
is 
$a_32(x_0^2+y_0^2)(x^2+y^2)\theta_A$. Hence the curves of equations $K_1\circ h$ and $K_2\circ h$ are transverse and we conclude that $i_{F_1}(\mathcal Z,V(K_1,K_2))=8$.
\item 
If $S\in\mathcal C_\infty\setminus\mathcal Z$, then $x_0^2+y_0^2\ne 0$, $z_0\ne 0$ and
$\mathcal B=\{F_1,m_1\}$ with
$m_1[x_0z_0:y_0z_0:-
\frac{x_0^2+y_0^2}2:x_0^2+y_0^2]$. Then $N_{\mathbf S}=0$
in $\mathbb C[x,y,z,t]$ (so $\gamma=0$ in $\mathbb C[x,y,z,t]$ ). 
We prove that $i_{F_1}(\mathcal Z,V(K_1,K_2))=8$ as in the previous case. We compute $i_{m_1}(\mathcal Z,V(K_1,K_2))$. 
We assume without loss of generality that $x_0=0$, $y_0=1$
and $z_0=i$.
Around
$m_1[0:z_0:-1/2:1]$, we  parametrise $\mathcal Z$ by $h(x,y)=(x,z_0+y,((x_1+x)^2+(y_1+y)^2)/2,1)$.
We have $\alpha\circ h(x,y)=\Delta_{\mathbf S}F\circ h(x,y)=y$, $Q(\nabla F)\circ h(x,y)=x^2+2iy+y^2$ and
$$\boldsymbol\sigma\circ h(x,y)= (x^2+2iy+y^2)
   \left(\begin{array}{c}0\\1\\i\\0\end{array}\right)-2y
    \left(\begin{array}{c}x\\i+y\\-1\\0\end{array}\right)=\left(\begin{array}{c}-2xy\\x^2-y^2\\z_0(x^2+y^2)\\0\end{array}\right).$$ 
Hence, for generic $A,B\in\mathbf W^\vee$, 
$K_1\circ h(x,y)$ has valuation 2. Moreover
$\alpha(h(x,y))=y$, $\beta(h(x,y))=-2(x^2+y^2)$ and
$\gamma(h(x,y))=0$.
Hence $K_2\circ h(x,y)$ has valuation 4 and we have
$i_{m_1}(\mathcal Z,V(K_1,K_2))=8$, so
$\mdeg\Sigma_S(\mathcal Z)=22-8-8=6$.
\item 
The case when $S\in\mathcal Z\cap\mathcal H^\infty$
has been studied in Proposition \ref{cas1}.
\end{itemize}
\section{About a reflected bundle}\label{bundle}
Recall that $\mathcal O_{\mathcal Z}(-1)=\{(m,v)\in\mathcal Z\times \mathbf{W}\ :\ v\in m\}$.
Observe that the set $\mathbf{R}(-1)$ of $(m,v)$ in the trivial bundle $\mathcal Z\times \mathbf{W} $ such that
$v$ corresponds to a point of $\mathbb P^3$ on the reflected line $\mathcal R_m$ is:
$$\mathbf{R}(-1)=\mathcal O_{\mathcal Z}(-1)+\{(m,v)\in \mathcal Z\times \mathbf{W}\ :\ v\in \sigma(m)\}.$$
Observe that this sum is direct in the generic case (when $S\not\in\mathcal Z$ and when $\mathcal W=\emptyset$, see Proposition \ref{B1}).
But, contrarily to the normal bundle considered in \cite{Trifogli,CataneseTrifogli} to study the evolute, 
$\mathbf{R}(-1)$ does not define a bundle since its rank is not constant. Indeed, the dimension of
$Vect(\mathbf{m},\boldsymbol{\sigma}(\mathbf{m}))$ equals 2 in general but not at every point $m\in\mathcal Z$ (it is strictly less than
2 when $m$ is a base point of $\mathbf{\sigma}_{|\mathcal Z}$ 
and, as seen in Proposition \ref{baseR}, such points always exist).
\begin{appendix}
\section{Caustics of surfaces linked with caustics of curves}\label{decompose}
For the classes of examples studied in this section, 
caustics of surfaces are linked with of caustics of planar curves.
We start with some facts on caustics of planar curves.
\subsection{Caustic of a planar curve}\label{lien}
Let $S_0[x_0:y_0:t_0]\in\mathbb P^2$ and an irreducible algebraic curve $\mathcal C=V(G)\subset \mathbb P^2$ with $G\in\mathbb C[x,y,t]$ homogeneous of degree $d\ge 2$. 
We write $\Delta_{\mathbf S_0}G:=x_0G_x+y_0G_y+z_0G_z$,
$N_{\mathbf S_0}=(x_0t-xt_0)^2+(y_0t-yt_0)^2$,
$\hess G$ for the Hessian form of $G$ and $H_G$ for its determinant
and  $\sigma_{\mathbf S_0,G}=(G_x^2+G_y^2)\cdot\mathbf S-2\Delta_{\mathbf S_0}G\cdot(G_x,G_y,0)$.
\begin{defi}[\cite{fredsoaz1}]\label{thmcurve}
The {\bf caustic map} of $\mathcal C$ from $S_0$
is the rational map $\Phi_{{{S_0}},\mathcal C}:\mathbb P^2\rightarrow\mathbb P^2$  corresponding
to $\boldsymbol {\Phi}_{{\mathbf{S}}_0,G}:\mathbb C^3\rightarrow\mathbb C^3$ given by 
$\boldsymbol {\Phi}_{{\mathbf{S}}_0,G}=-\frac{2H_GN_{\mathbf {S}_0}}{(d-1)^{2}} \cdot \Idd
 +\Delta_{{\mathbf {S}}_0}G\cdot
\boldsymbol{\sigma}_{{\mathbf{S}_0},G}$.
The caustic by reflection $\Sigma_{S_0}(\mathcal C)$
is the Zariski closure of $\Phi_{{S}_0,\mathcal C}(\mathcal C)$.
\end{defi}

We start with a technical lemma making a link between the formulas
involved in Theorems
\ref{thmcurve} and \ref{thmsurface}.
Inspired by the three dimensional case, let us define the following quantities:
$\alpha_{{\mathbf{S}_0},G}:=\Delta_{\mathbf{S}_0}G$,  
$\beta_{{\mathbf{S}_0},G}:=-2 \left[
  \hess G(\mathbf{S}_0, \boldsymbol{\sigma}_{\mathbf{S}_0,G})
+(\Delta_{\mathbf{S}_0}G)^2(G_{xx}+G_{yy})\right]$
and
$\gamma_{{\mathbf{S}_0},G}:=
- \frac{4\Delta_{\mathbf{S}_0}G}{(d-1)^2}N_{\mathbf{S}_0}\, H_G$.
\begin{lem}\label{petitlemme}
Let $S_0\in\mathbb P^2$ and $\mathcal C=V(G)\subset \mathbb P^2$
be an irreducible
algebraic curve with $G\in\mathbb C[x,y,t]$ being homogeneous of degree $d\ge 2$.
We have $\beta_{{\mathbf{S}_0},G}=\frac{2}{(d-1)^2}N_{\mathbf{S}_0}\, H_G 
        =\frac{\gamma_{{\mathbf{S}_0},G}}{2 \alpha_{{\mathbf{S}_0},G}}$
and so
$\Phi_{\mathbf{S}_0,G}(\mathbf{m})=\lambda_0\cdot \mathbf{m}
+\lambda_1\cdot\boldsymbol{\sigma}
_{\mathbf{S}_0,G}(\mathbf{m}), $
with $[\lambda_0:\lambda_1]=[-\beta_{{\mathbf{S}_0},G}:
\alpha_{{\mathbf{S}_0},G}](\mathbf{m})$ (i.e. $\alpha_{{\mathbf{S}_0},G}(\mathbf{m})\lambda_0+\beta
_{{\mathbf{S}_0},G}(\mathbf{m})\lambda_1=0$ if 
$(\alpha_{{\mathbf{S}_0},G},\beta_{{\mathbf{S}_0},G})(\mathbf{m})\ne
\mathbf{0}$).
\end{lem}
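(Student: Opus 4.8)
The plan is to reduce the whole statement to the single identity
$$\beta_{\mathbf{S}_0,G}=\frac{2}{(d-1)^2}\,N_{\mathbf{S}_0}\,H_G\qquad\text{on }\mathcal C .$$
Granting it, everything else is formal. Since $\alpha_{\mathbf{S}_0,G}=\Delta_{\mathbf{S}_0}G$ and $\gamma_{\mathbf{S}_0,G}=-\frac{4\Delta_{\mathbf{S}_0}G}{(d-1)^2}N_{\mathbf{S}_0}H_G$, the three coefficients are thereby all expressed through $\Delta_{\mathbf{S}_0}G$, $N_{\mathbf{S}_0}$, $H_G$, which gives the stated relation between $\beta_{\mathbf{S}_0,G}$ and $\gamma_{\mathbf{S}_0,G}/\alpha_{\mathbf{S}_0,G}$; and comparing with the defining formula $\boldsymbol{\Phi}_{\mathbf{S}_0,G}=-\frac{2H_GN_{\mathbf{S}_0}}{(d-1)^{2}}\cdot\Idd+\Delta_{\mathbf{S}_0}G\cdot\boldsymbol{\sigma}_{\mathbf{S}_0,G}$ of Definition~\ref{thmcurve} shows that $\boldsymbol{\Phi}_{\mathbf{S}_0,G}=-\beta_{\mathbf{S}_0,G}\cdot\Idd+\alpha_{\mathbf{S}_0,G}\cdot\boldsymbol{\sigma}_{\mathbf{S}_0,G}$ on $\mathcal C$, i.e. $\Phi_{\mathbf{S}_0,G}(\mathbf{m})=\lambda_0\cdot\mathbf{m}+\lambda_1\cdot\boldsymbol{\sigma}_{\mathbf{S}_0,G}(\mathbf{m})$ with $[\lambda_0:\lambda_1]=[-\beta_{\mathbf{S}_0,G}:\alpha_{\mathbf{S}_0,G}](\mathbf{m})$, that is $\alpha_{\mathbf{S}_0,G}\lambda_0+\beta_{\mathbf{S}_0,G}\lambda_1=0$. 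Working only on $\mathcal C$ is harmless here since $\Sigma_{S_0}(\mathcal C)$ is the Zariski closure of $\Phi_{S_0,\mathcal C}(\mathcal C)$.

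To prove the identity I would carry out, one variable lower, the computation of $B$ in the proof of Theorem~\ref{thmsurface}. Write $\mathbf{g}:=(G_x,G_y)$ and $\mathbf{w}:=(tx_0-t_0x,\,ty_0-t_0y)$, so $N_{\mathbf{S}_0}=\langle\mathbf{w},\mathbf{w}\rangle$; let $H_2$ be the Hessian of $G$ with respect to $x,y$, and note that $t\mathbf{S}_0-t_0\mathbf{m}$ has vanishing last coordinate and first coordinates $\mathbf{w}$. Expanding $\boldsymbol{\sigma}_{\mathbf{S}_0,G}=(G_x^2+G_y^2)\mathbf{S}_0-2\Delta_{\mathbf{S}_0}G\cdot(G_x,G_y,0)$ by bilinearity of $\hess G$, and using the Euler relations $xG_x+yG_y+tG_t=dG$ and $xG_{xw}+yG_{yw}+tG_{tw}=(d-1)G_w$ to eliminate every $t$-derivative---so that only the block $H_2$ survives, exactly as the $z$-derivatives were eliminated in the computation of $B$---one obtains on $\mathcal C$ that $\Delta_{\mathbf{S}_0}G=\frac1t\langle\mathbf{w},\mathbf{g}\rangle$, that $\hess G(\mathbf{S}_0,(G_x,G_y,0))=\frac1t\langle H_2\mathbf{w},\mathbf{g}\rangle+\frac{(d-1)t_0}{t}\langle\mathbf{g},\mathbf{g}\rangle$ and that $\hess G(\mathbf{S}_0,\mathbf{S}_0)=\frac1{t^2}\langle H_2\mathbf{w},\mathbf{w}\rangle+\frac{2(d-1)t_0}{t}\Delta_{\mathbf{S}_0}G$. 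Substituting these into $-\frac12\beta_{\mathbf{S}_0,G}=\hess G(\mathbf{S}_0,\boldsymbol{\sigma}_{\mathbf{S}_0,G})+(\Delta_{\mathbf{S}_0}G)^2(G_{xx}+G_{yy})$, the terms carrying $t_0/t$ cancel and one is left, on $\mathcal C$, with
$$-\frac{t^2}{2}\,\beta_{\mathbf{S}_0,G}=\langle\mathbf{g},\mathbf{g}\rangle\langle H_2\mathbf{w},\mathbf{w}\rangle-2\langle\mathbf{w},\mathbf{g}\rangle\langle H_2\mathbf{w},\mathbf{g}\rangle+\langle\mathbf{w},\mathbf{g}\rangle^2\operatorname{tr}H_2 .$$

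The crux---and the place where the two-dimensional situation genuinely departs from the three-dimensional one, in which $\beta$ is an independent coefficient---is the purely linear-algebraic identity
$$\langle\mathbf{g},\mathbf{g}\rangle\langle H\mathbf{w},\mathbf{w}\rangle-2\langle\mathbf{w},\mathbf{g}\rangle\langle H\mathbf{w},\mathbf{g}\rangle+\langle\mathbf{w},\mathbf{g}\rangle^2\operatorname{tr}H=\langle\mathbf{w},\mathbf{w}\rangle\,\langle H\mathbf{g}^{\perp},\mathbf{g}^{\perp}\rangle ,$$
valid for all $\mathbf{w},\mathbf{g}\in\mathbb{C}^2$ and every symmetric $2\times2$ matrix $H$, with $\mathbf{g}^{\perp}:=(-g_2,g_1)$. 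I would verify it by expanding both sides in the basis $\{\mathbf{g},\mathbf{g}^{\perp}\}$---legitimate on the dense open set $\langle\mathbf{g},\mathbf{g}\rangle\neq0$, hence everywhere, both sides being polynomials---using $\operatorname{tr}H=\langle\mathbf{g},\mathbf{g}\rangle^{-1}(\langle H\mathbf{g},\mathbf{g}\rangle+\langle H\mathbf{g}^{\perp},\mathbf{g}^{\perp}\rangle)$ and $\langle\mathbf{w},\mathbf{w}\rangle\langle\mathbf{g},\mathbf{g}\rangle=\langle\mathbf{w},\mathbf{g}\rangle^2+(w_1g_2-w_2g_1)^2$; both sides collapse to $\langle\mathbf{g},\mathbf{g}\rangle^{-1}(\langle\mathbf{w},\mathbf{g}\rangle^2+(w_1g_2-w_2g_1)^2)\langle H\mathbf{g}^{\perp},\mathbf{g}^{\perp}\rangle$. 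Applying it with $H=H_2$ and noting $\langle H_2\mathbf{g}^{\perp},\mathbf{g}^{\perp}\rangle=G_{xx}G_y^2-2G_{xy}G_xG_y+G_{yy}G_x^2=-h_G$, where $h_G$ is the bordered Hessian (the determinant of the symmetric $3\times3$ matrix with rows $(G_{xx},G_{xy},G_x)$, $(G_{xy},G_{yy},G_y)$, $(G_x,G_y,0)$), we get $\beta_{\mathbf{S}_0,G}=\frac{2}{t^2}N_{\mathbf{S}_0}h_G$ on $\mathcal C$. Finally, the column-then-row Euler manipulation used in the footnote of the proof of Theorem~\ref{thmsurface}, applied to the $3\times3$ Hessian of $G$, yields $h_G=\frac{t^2}{(d-1)^2}H_G$ on $\mathcal C$, and substituting this gives the claimed identity.

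The main obstacle I anticipate is purely organizational: running the Euler reductions of the second step so that the $t_0/t$-terms visibly cancel and the outcome lands in exactly the shape to which the $2\times2$ identity applies; the identity itself, and the simplification $h_G=\frac{t^2}{(d-1)^2}H_G$, are short once the stage is set, and everything downstream of the formula for $\beta_{\mathbf{S}_0,G}$ is formal.
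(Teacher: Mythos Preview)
The paper gives no proof of this lemma; it simply writes ``We omit the straightforward proof of this lemma.'' Your proposal therefore supplies what the authors left implicit, and it does so correctly. Rerunning the Euler-identity reduction from the proof of Theorem~\ref{thmsurface} one dimension lower, and then closing with the $2\times 2$ identity
\[
\langle\mathbf g,\mathbf g\rangle\langle H\mathbf w,\mathbf w\rangle
-2\langle\mathbf w,\mathbf g\rangle\langle H\mathbf w,\mathbf g\rangle
+\langle\mathbf w,\mathbf g\rangle^2\operatorname{tr}H
=\langle\mathbf w,\mathbf w\rangle\,\langle H\mathbf g^{\perp},\mathbf g^{\perp}\rangle,
\]
is exactly the natural route: this identity is the point where the planar situation genuinely collapses (in three variables there is no analogue, which is why $\beta$ remains an independent coefficient there), and it is almost certainly what the authors meant by ``straightforward.'' Your verification that the $t_0/t$ terms cancel, and the identifications $\langle H_2\mathbf g^\perp,\mathbf g^\perp\rangle=-h_G$ and $h_G=\frac{t^2}{(d-1)^2}H_G$ on $\mathcal C$, are all correct.

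One small remark: as you note, the identity $\beta_{\mathbf S_0,G}=\frac{2}{(d-1)^2}N_{\mathbf S_0}H_G$ holds only on $\mathcal C$, not in $\mathbb C[x,y,t]$ (already for $G=y^2/2-xt$ and $\mathbf S_0=(0,0,1)$ the two sides differ off the curve). This is harmless for the applications in Theorems~\ref{thmrevolution} and~\ref{thmcylinder}, which only evaluate at points of $V(G)$.
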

We omit the straightforward proof of this lemma.
\subsection{Caustic of a surface from a light position on a revolution axis}\label{revo}
To simplify, we consider
the case of a surface $\mathcal Z=V(F)$ with axis of revolution $V(x,y)$.
We will use the fact that $F=G\circ h$ where $h(x,y,z,t)=(\sqrt{x^2+y^2},z,t)$ for some homogeneous polynomial
$G\in\mathbb C[r,z,t]$ with
monomials of even degree in $r$.
Such a surface $\mathcal Z$ is written $\mathcal{R}(G)$ and 
is called surface of revolution
of axis $V(x,y)$ of the curve $V(G)\subset\mathbb P^2$.
\begin{thm}\label{thmrevolution}
Let $\mathcal Z=\mathcal R(G)$ with $G\in\mathbb C[r,z,t]$ irreducible
homogeneous of degree $d\ge 2$ (the monomials of $G$ being of even degree in $r$). Assume that $\mathcal Z\not\subseteq V(\Delta_{\mathbf{S}}F,
 (F_x^2+F_y^2+F_z^2)\hess F(\mathbf{S},\mathbf{S}))$.
Let $S[0:0:z_0:t_0]\in\mathbb P^3$ and $S_0[0:z_0:t_0]\in\mathbb P^2$.

If $d=2$ and if $S$ is a focal point of $\mathcal Z$,
then $\Sigma_{S}(\mathcal Z)$ is reduced to another other focal point.
Otherwise, we have
$\Sigma_{S}(\mathcal Z)=V(x,y)\cup \mathcal{R}(\Sigma_{S_0}
(V(G)))$.
\end{thm}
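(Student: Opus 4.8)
The plan is to exploit the rotational symmetry of the data and then to factor, on $\mathcal Z$, the quadratic form $Q_{\mathbf S,F}(\mathbf m,\lambda_0,\lambda_1)$ into two linear forms in $(\lambda_0,\lambda_1)$, one producing the axis $V(x,y)$ and the other producing the surface of revolution of the planar caustic. Write $F=\tilde G(\rho,z,t)$ with $\rho=x^2+y^2$ and $G(r,z,t)=\tilde G(r^2,z,t)$. Since $S[0:0:z_0:t_0]$ lies on the axis, the complexified rotation group $SO(2)_{\mathbb C}$ acting on $(x,y)$ fixes $\mathbf S$ and preserves $F$ and the quadric $Q$, hence preserves $\boldsymbol\sigma$, $\hat Z$, $\Phi$, and therefore $\Sigma_S(\mathcal Z)=\overline{\Phi(\hat Z)}$. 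Consequently the image of any $SO(2)_{\mathbb C}$-equivariant morphism out of $\mathcal Z$ has Zariski closure of the form $\mathcal R(K)$, $K$ being its trace on the meridian plane $\mathcal H_0=V(y)\cong\mathbb P^2_{[r:z:t]}$; here $\mathcal Z\cap\mathcal H_0=V(G)=\mathcal C$ and $S$ maps to $S_0[0:z_0:t_0]$. The standing hypothesis forces $\Delta:=\Delta_{\mathbf S}F=z_0\tilde G_z+t_0\tilde G_t\not\equiv 0$ (otherwise $\hess F(\mathbf S,\mathbf S)=\mathbf S\cdot\nabla(\Delta_{\mathbf S}F)\equiv 0$ as well, contradicting it) and guarantees that $\Sigma_S(\mathcal Z)$ is the one computed from $Q_{\mathbf S,F}$.

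The heart of the argument is the identity, valid on $\mathcal Z$,
\[Q_{\mathbf S,F}(\mathbf m,\lambda_0,\lambda_1)=\bigl(\lambda_0-4\Delta\,\tilde G_\rho\,\lambda_1\bigr)\bigl(\Delta\,\lambda_0+\hat\beta\,\lambda_1\bigr),\]
where $\tilde G_\rho$ is the derivative of $\tilde G$ in $\rho$ and $\hat\beta$ is the $SO(2)_{\mathbb C}$-invariant lift to $\mathbb C[x,y,z,t]$ of $\beta_{\mathbf S_0,G}$. Equivalently one must check $\alpha=\Delta$, $\gamma=-4\Delta\tilde G_\rho\hat\beta$ and $\beta=\hat\beta-4\Delta^2\tilde G_\rho$ on $\mathcal Z$. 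The first holds because $x_0=y_0=0$. Since $\boldsymbol\kappa(\nabla F)=(2x\tilde G_\rho,2y\tilde G_\rho,\tilde G_z,0)$, a determinant manipulation (after the bordered-Hessian reduction $H_F=\tfrac{(d-1)^2}{t^2}h_F$ of the footnote) yields the polynomial identity $h_F=2\tilde G_\rho\, h_G\big|_{r^2=\rho}$, hence $H_F=2\tilde G_\rho H_G\big|_{r^2=\rho}$ on $\mathcal Z$; combined with $N_{\mathbf S}=N_{\mathbf S_0}\big|_{r^2=\rho}$ and Lemma~\ref{petitlemme} this gives $\gamma=-4\Delta\tilde G_\rho\hat\beta$. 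For the $\beta$-identity one uses that $\alpha,\beta,\gamma$, being $SO(2)_{\mathbb C}$-invariant, are polynomials in $(\rho,z,t)$, so it suffices to verify it on $\mathcal H_0$, i.e. after $\rho=r^2$; there one checks directly that $\hess F(\mathbf S,\boldsymbol\sigma)\big|_{y=0}=\hess G(\mathbf S_0,\boldsymbol\sigma_{\mathbf S_0,G})$ and $(F_{xx}+F_{yy}+F_{zz})\big|_{y=0}=(G_{rr}+G_{zz})+2\tilde G_\rho$, whence $\beta\big|_{y=0}=\hat\beta\big|_{y=0}-4\Delta^2\tilde G_\rho\big|_{y=0}$ and, both sides being polynomials in $(\rho,z,t)$, the identity follows.

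Given the factorization, the two roots over a generic $m\in\mathcal Z$ are $[\lambda_0:\lambda_1]=[4\Delta\tilde G_\rho:1]$ and $[\lambda_0:\lambda_1]=[-\hat\beta:\Delta]$. For the first, $4\Delta\tilde G_\rho\,\mathbf m+\boldsymbol\sigma(\mathbf m)$ has vanishing $x$- and $y$-coordinates, so this sheet maps into $\mathcal D=V(x,y)$; it is non-constant, for otherwise all the planar reflected lines in a meridian would be concurrent, forcing $\Sigma_{S_0}(\mathcal C)$ to be a point and hence (planar theory, cf. \cite{fredsoaz1} and Proposition~\ref{prop:paraboloid}) $d=2$ with $S$ focal — the excluded case; so the closure of this sheet is $V(x,y)$. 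For the second root, $-\hat\beta\,\mathbf m+\Delta\,\boldsymbol\sigma(\mathbf m)$ is $SO(2)_{\mathbb C}$-equivariant and its restriction to $\mathcal H_0$ (dropping the vanishing middle coordinate) equals $-\beta_{\mathbf S_0,G}\cdot\Idd+\alpha_{\mathbf S_0,G}\cdot\boldsymbol\sigma_{\mathbf S_0,G}=\boldsymbol\Phi_{\mathbf S_0,G}$ by Lemma~\ref{petitlemme} and Definition~\ref{thmcurve}; hence its closure is $\mathcal R(\Sigma_{S_0}(V(G)))$. Since $\hat Z$ is generically covered by these two roots (they coincide only where the two linear factors are proportional on $\mathcal Z$), this gives $\Sigma_S(\mathcal Z)=V(x,y)\cup\mathcal R(\Sigma_{S_0}(V(G)))$. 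In the remaining case, $d=2$ with $S$ a focal point, the two linear factors become proportional on $\mathcal Z$, $Q_{\mathbf S,F}$ is a perfect square, and both sheets collapse onto the single point $\Sigma_{S_0}(V(G))$ — the other focus of the conic $V(G)$, which lies on $\mathcal D$ — so $\Sigma_S(\mathcal Z)$ is reduced to it.

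I expect the $\beta$-identity $\beta=\hat\beta-4\Delta^2\tilde G_\rho$ on $\mathcal Z$ (equivalently, the fact that $[\,4\Delta\tilde G_\rho:1\,]$ is always a root of $Q_{\mathbf S,F}(\mathbf m,\cdot)$, i.e. that the reflected line always meets the axis at a ramification point) to be the main obstacle: it is a genuine computation involving $\hess F$, the explicit form of $\boldsymbol\sigma$, and the weighted Euler relations $2\rho\tilde G_\rho+z\tilde G_z+t\tilde G_t=d\tilde G$ together with its derivatives. Reducing everything to the meridian plane — where the $SO(2)_{\mathbb C}$-invariance of $\alpha,\beta,\gamma$ upgrades ``on $\mathcal Z$'' to an identity in $\mathbb C[r,z,t]$ that can be matched term by term with the planar formulas of Definition~\ref{thmcurve} and Lemma~\ref{petitlemme} — is what keeps this computation under control.
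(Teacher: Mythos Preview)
Your proof is correct and follows essentially the same route as the paper: the key identities $\alpha=\Delta_{\mathbf S_0}G\circ h$, $\gamma=\tfrac{G_r}{r}\gamma_{\mathbf S_0,G}\circ h$ (via $H_F=\tfrac{G_r}{r}H_G$), and $\beta=\bigl[\beta_{\mathbf S_0,G}-2(\Delta_{\mathbf S_0}G)^2\tfrac{G_r}{r}\bigr]\circ h$ are exactly what the paper establishes, and the paper then verifies the same two roots $[2\Delta G_r:r]$ and $[-\beta_{\mathbf S_0,G}:\alpha_{\mathbf S_0,G}]$ via Vieta's relations rather than writing the factorization explicitly. Your use of $SO(2)_{\mathbb C}$-invariance to reduce the $\beta$-identity to the meridian plane $\{y=0\}$, together with the $\tilde G(\rho,z,t)$ notation avoiding $\sqrt{x^2+y^2}$, is a mild but genuine streamlining of the paper's direct computation of $F_{xx},F_{xy},\dots$ in terms of $G_r,G_{rr},\dots$; otherwise the arguments coincide, including the handling of the focal exception.
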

\begin{proof}
We have
\begin{equation}\label{alpharev}
\alpha=[z_0 F_z+t_0F_t]
=\Delta_{\mathbf S_0}G\circ h.
\end{equation}
Now let us prove that
\begin{equation}\label{gammarev}
\gamma=\left[\frac{G_r}r
\gamma_{\mathbf S_0,G}\right]\circ h,
\end{equation}
Since $\Delta_{\mathbf S}F=\Delta_{\mathbf S_0}G\circ h$ and $N_{\mathbf S}=N_{\mathbf S_0}\circ h$, we just have to prove that 
$H_F=\left[\frac{G_r}rH_G\right]\circ h$ on $\mathcal Z$.
Recall that $\frac{t^2}{(d-1)^2}H_F=h_F$ on $\mathcal Z$ and that
$\frac{t^2}{(d-1)^2}H_G=h_G$ on $V(G)$ with
$$h_F:= \left|\begin{array}{cccc}F_{xx}&F_{xy}&F_{xz}&F_x\\
F_{xy}&F_{yy}&F_{yz}&F_y\\
F_{xz}&F_{yz}&F_{zz}&F_z\\
F_x&F_y&F_z&0\end{array}
\right|\ \ \mbox{and}\ \
  h_G:= \left|\begin{array}{cccc}G_{rr}&G_{rz}&G_r\\
G_{rz}&G_{zz}&G_z\\
G_r&G_z&0\end{array}
\right|.$$
We will write as usual $G_r$, $G_z$, $G_t$ for the first order derivatives
of $G$ and $G_{rr}$, $G_{rz}$, $G_{rt}$, $G_{zt}$ and $G_{tt}$ for
the second order derivatives. We also write 
$F_{r}:=G_r\circ h$ and we define analogously
$F_{xr}$, $F_{yr}$, $F_{zr}$ and $F_{rr}$. 
Due to the particular form of $F$, we immediately obtain that
$$F_x=\frac xr F_r,\ \ F_y=\frac yr F_r,\ \ F_{xt}=\frac xr F_{rt},
\ \ F_{yt}=\frac yr F_{rt}, $$
$$F_{xx}=\frac {F_r}r+\frac{x^2}{r^2}F_{rr}-\frac{x^2}{r^3}F_r,\ \ 
   F_{yy}=\frac {F_r}r+\frac{y^2}{r^2}F_{rr}-\frac{y^2}{r^3}F_r,\ \ $$
$$F_{xy}=\frac{xy}{r^2}F_{rr}-\frac{xy}{r^3}F_r,\ \ 
F_{xz}=\frac xr F_{rz},\ \ F_{yz}=\frac yr F_{rz}.$$
with $r:=\sqrt{x^2+y^2}$.
Due to these relations and to the above formula
of $h_F$, we have
\footnote{writing respectively $L_i$ and $C_i$ for the i-th line and for the i-th row, we make successively the following linear changes: $L_1\leftarrow
yL_1-xL_2$, $C_1\leftarrow yC_1-xC_2$ and $L_2\leftarrow rL_2+\frac xr L_1$}
$$h_F= \frac 1{r y^2}\left|\begin{array}{cccc}rF_r&-\frac{xF_r}r&0&0\\
0&\frac{y^2}rF_{rr}&yF_{rz}&yF_r\\
0&\frac y rF_{rz}&F_{zz}&F_z\\
0&\frac yrF_r&F_z&0\end{array}
\right|=\frac {F_r}r\left|\begin{array}{cccc}F_{rr}&F_{rz}&F_r\\
F_{rz}&F_{zz}&F_z\\
F_r&F_z&0\end{array}
\right|$$
and (\ref{gammarev}) follows. Now let us prove that
\begin{equation}\label{betarev}
\beta=\left[\beta_{\mathbf S_0,G}-
      2(\Delta_{\mathbf S_0}G)^2\frac{G_r}r\right]\circ h.
\end{equation}
Using the above formulas, we obtain
$F_{xx}+F_{yy}+F_{zz}=F_{rr}+F_{zz}+\frac{F_r}r$.
Now (\ref{betarev}) comes from the definition of $\beta$,
and from the above expressions of $F_x$, $F_y$, $F_{xz}$, $F_{yz}$ $F_{zz}$, $F_{xt}$, $F_{yt}$ and $F_{tt}$.
Let $m[x:y:z:t]\in\mathcal Z\setminus V(\alpha,
\beta)$.
To prove the result, it is enough to prove that the two solutions
$[\lambda_0:\lambda_1]$ of (\ref{formequadratique}) are 
$[\lambda_0^{(1)}:\lambda_1^{(1)}]=[2\Delta_{\mathbf S_0}G.G_r:r]\circ h$
and
$[\lambda_0^{(2)}:\lambda_1^{(2)}]=[-2h_G N_{\mathbf S_0}:t^2
     \Delta_{\mathbf S_0}G]\circ h =\Phi_{\mathbf S_0,G}\circ h$.
Indeed, since $r=\sqrt{x^2+y^2}$, $x_0=y_0=0$, $F_x=\frac x r F_r$
and $F_y=\frac y r F_r$, we have
$\mathbf M_1:=\lambda_0^{(1)}\cdot\Idd+
\lambda_1^{(1)}\cdot\boldsymbol{\sigma}\in
V(x,y) $
and
$\mathbf M_2:=\lambda_0^{(2)}\cdot\Idd+
\lambda_1^{(2)}\cdot\boldsymbol{\sigma}
=\left[x\frac{R_2}r:y \frac{R_2}r:Z_2:T_2\right]\circ h$
(using Lemma \ref{petitlemme})
with
$\boldsymbol{\Phi}_{\mathbf S_0,G}=(R_2,Z_2,T_2)$.
Observe that $R_2/r\in\mathbb C[r,z,t]$.
Due to Theorem \ref{thmcurve}, the Zariski closure of
$\Phi_{S_0,V(G)}(V(G))$ is
the caustic $\Sigma_{S_0}(V(G))$. 
Observe that, for every $(r,z,t)$, the set $[x:y]$ goes along 
$\mathbb P^1$ when $(x,y)$ moves in $\{(x,y)\in\mathbb C^2\ :\ x^2+y^2=r^2\}$.
So the Zariski closure of $M_2(\mathcal Z)$ 
is the revolution surface
$\mathcal R(\Sigma_{S_0}(V(G)))$.
Observe moreover that the Zariski closure of $M_1(\mathcal Z)$ is $V(x,y)$ unless it is a single point $A$ of $V(x,y)$, 
which would mean that every reflected
line contains this point $A$, this would imply that $\Sigma_{S_0}(V(G))$ is reduced to a point and so that $V(G)$ is a conic and $S_0$
one of its focal point (see e.g. \cite{fredsoaz2}).
To prove that $[\lambda_0^{(1)}:\lambda_1^{(1)}]$
and $[\lambda_0^{(2)}:\lambda_1^{(2)}]$ are the solutions of 
(\ref{formequadratique}), it is enough to prove that 
\begin{equation}\label{somme}
[\gamma:\alpha]
=[\lambda_0^{(1)}\lambda_0^{(2)}:\lambda_1^{(1)}\lambda_1^{(2)}]
\end{equation}
\begin{equation}\label{produit}
\mbox{and}\quad[-\beta:\alpha]
 =[\lambda_0^{(1)}\lambda_1^{(2)}+\lambda_0^{(2)}\lambda_1^{(1)}:\lambda_1^{(1)}\lambda_1^{(2)}].
\end{equation}
Now (\ref{somme}) comes from $[\lambda_0^{(1)}\lambda_0^{(2)}:\lambda_1^{(1)}\lambda_1^{(2)}]=[-4h_GN_{\mathbf S_0}G_r:rt^2]\circ h$, (\ref{alpharev}) and (\ref{gammarev}).
Now  (\ref{produit})
is equivalent to 
$\beta=\left[-2(\Delta_{\mathbf S_0}G)^2
   \frac{G_r}r+\frac{2h_GN_{\mathbf S_0}}{t^2}\right]\circ h $. So (\ref{produit}) comes from (\ref{betarev}) and Lemma \ref{petitlemme}. 
\end{proof}
\begin{rqe}
Due to Theorem \ref{thmrevolution}, the caustic by reflection of a sphere $\mathcal S$ of center $A$ from
$S\ne A$ is the union of the line $(A\ S)$ and of the revolution surface
of axis $(A\ S)$ obtained from the caustic curve of the circle 
$\mathcal S\cap \mathcal P$ where $\mathcal P$ is any plane containing 
$(A\ S)$.
\end{rqe}
We consider the case where 
$\mathcal Z=V(x^2+y^2-2zt)$ with $S=[0:0:z_0:1]$
(with $S_0[0:z_0:1]$) with 
$z_0\ne 1/2$ (i.e. $S\in V(x,y)$ and $S$ is not a focal point
of $\mathcal P$). Observe that $\mathcal Z=\mathcal R(V(G))$
with $G(r,z,t)=r^2/2-zt$. Due to \cite{fredsoaz1}, 
$\Sigma_{S_0}(V(G))$ has degree 6 except if $z_0=0$
(corresponding to $S\in\mathcal Z$) and, in this
last case, $\Sigma_{S_0}(V(G))$ has degree 4.
More precisely:
\begin{prop}\label{parabaxe}
Let $\mathcal Z=V(x^2+y^2-2zt)\subset\mathbb P^3$ and $S[0:0:z_0:1]\in\mathbb P^3$
with $z_0\ne 1/2$.
Then $\Sigma_S(\mathcal Z)=V(x,y)\cup \mathcal R(V(H))$, where
\begin{itemize}
\item if $z_0\ne 0$, the curve $V(H)$ is the sextic given by
\begin{multline*}
H(r,z,t):=27\, r^4z^2-512\, z^3t^3+288\, z^2r^2t^2+
108\, r^4t^2z_0^4+\\
(3072\, zt^5-24r^4zt-512t^6-6144\ z^2t^4+4992\, zr^2t^3+4096\, z^3t^3-1536\, 
z^2r^2t^2-2112\, r^2t^4-1068\, r^4t^2-8\, r^6)z_0^3\\
+(-1536\, zt^5-10560\, zr^2t^3-6144\, z^3t^3+6144\, z^2t^4+288\, r^2t^4+108\, r^4z^2-168\, r^4zt+3195\, r^4t^2+72\, r^6+2688\, z^2r^
2t^2)z_0^2\\
+(-1536\, z^2t^4+3072\, z^3t^3+90\, r^4zt-108\, r^4z^2-1728\, r^4t^2-1536\, z^2r^2t^2+4032\, zr^2t^3-162\, r^6)z_0.
\end{multline*}
\item if $z_0=0$, the curve $V(H)$ is the cuartic given by
$H(r,z,t):=27\, r^4-512\, zt^3+288\, r^2t^2$.
\end{itemize}
\end{prop}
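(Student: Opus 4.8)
The plan is to reduce to the planar case through Theorem~\ref{thmrevolution} and then to compute explicitly the caustic of the auxiliary parabola.

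\emph{Step 1: reduction to a planar caustic.} I would write $\mathcal Z=V(x^2+y^2-2zt)=\mathcal R(V(G))$ with $G(r,z,t)=r^2/2-zt$ (an irreducible conic), and set $S_0=[0:z_0:1]\in\mathbb P^2$. The focus of $V(G)$ is $[0:1:2]$, so the hypothesis $z_0\ne1/2$ means precisely that $S$ is not a focal point of $\mathcal Z$ (the focal points being $F_1=[0:0:1:0]$, excluded since the $t$-coordinate of $S$ is $1\ne0$, and $F_2=[0:0:1:2]$). Moreover $\Delta_{\mathbf S}F=-z_0t-z\not\equiv0$, so $\mathcal Z\not\subseteq V(\Delta_{\mathbf S}F,(F_x^2+F_y^2+F_z^2)\hess F(\mathbf S,\mathbf S))$, and Theorem~\ref{thmrevolution} yields $\Sigma_S(\mathcal Z)=V(x,y)\cup\mathcal R\bigl(\Sigma_{S_0}(V(G))\bigr)$. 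Since the polynomial $H$ in the statement is even in $r$, the revolution surface $\mathcal R(V(H))$ is well defined, and it remains only to identify $\Sigma_{S_0}(V(G))$ with $V(H)$.

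\emph{Step 2: the caustic of the parabola.} Using Definition~\ref{thmcurve} with $\mathbf S_0=(0,z_0,1)$ and $G=r^2/2-zt$, a short computation gives $\Delta_{\mathbf S_0}G=-z_0t-z$, $N_{\mathbf S_0}=r^2+(z_0t-z)^2$, $H_G=-1$ (the Hessian of a conic is a nonzero constant), and
$$\boldsymbol{\sigma}_{\mathbf S_0,G}=\bigl(2r(z_0t+z),\ z_0r^2-z_0t^2-2zt,\ r^2+t^2\bigr),$$
whence, as $d=2$, $\boldsymbol{\Phi}_{\mathbf S_0,G}=2N_{\mathbf S_0}\cdot(r,z,t)-(z_0t+z)\cdot\boldsymbol{\sigma}_{\mathbf S_0,G}$. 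Parametrizing the conic $V(G)$ rationally by $(r,z,t)=(2s,2s^2,1)$ and substituting, a direct computation gives
$$\boldsymbol{\Phi}_{\mathbf S_0,G}(2s,2s^2,1)=\bigl(16(1-2z_0)s^3,\ 16s^6+24(1-z_0)s^4+6z_0s^2+z_0^2,\ (1-2z_0)(6s^2-z_0)\bigr).$$
Then I would homogenize this map $\mathbb P^1\to\mathbb P^2$ and eliminate $s$ by a resultant (conveniently with a computer algebra system): for $z_0\ne0$ the three degree-$6$ forms have no common factor and the parametrization is birational onto its image, so the resulting equation, after clearing the constant, is exactly the irreducible sextic $H(r,z,t)$ of the statement (degree $6$, consistent with \cite{fredsoaz1}).

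\emph{Step 3: the vertex case, and the main difficulty.} For $z_0=0$ the source $S_0=[0:0:1]$ is the vertex of the parabola and the three coordinates of $\boldsymbol{\Phi}_{\mathbf S_0,G}(2s,2s^2,1)$ share the common factor $s^2$; equivalently, putting $z_0=0$ in the sextic $H$ gives $z^2\bigl(27r^4-512zt^3+288r^2t^2\bigr)$, so $\Sigma_{S_0}(V(G))$ drops to the quartic $V(27r^4-512zt^3+288r^2t^2)$, again in agreement with \cite{fredsoaz1}. Combining the three steps proves the proposition. The only heavy part is Step~2, namely carrying out the elimination and reading off the coefficients of the degree-$6$ polynomial $H$; everything else is a one-line substitution or an immediate application of Theorem~\ref{thmrevolution}.
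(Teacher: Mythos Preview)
Your proof is correct and follows essentially the same route as the paper: reduce to the planar caustic via Theorem~\ref{thmrevolution}, parametrize the parabola $V(G)$ rationally, compute $\boldsymbol{\Phi}_{\mathbf S_0,G}$ on this parametrization, and eliminate. The only cosmetic difference is that the paper uses the homogeneous parametrization $(u,v)\mapsto[uv:u^2/2:v^2]$ while you use the equivalent affine one $s\mapsto(2s,2s^2,1)$; your additional observation that setting $z_0=0$ in the sextic $H$ factors as $z^2(27r^4-512zt^3+288r^2t^2)$ is a nice consistency check the paper does not make explicit.
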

\begin{figure}[!ht]
\centering
\makebox{
\includegraphics[scale=.7]{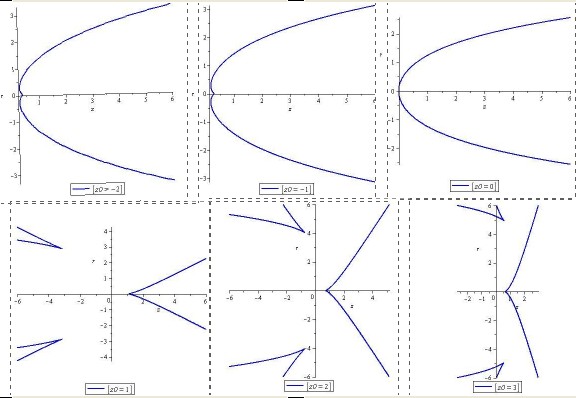}}
\caption{\label{fig:paraboloide1} {Caustics $\mathcal {C'}$ of $V(r^2-2zt)$ from $[0:z_0:1]$
for $z_0=-1,-2, 0,  1,  2, 3$. For every $z_0$, 
the two-dimensional part of the caustic by reflection of $V(x^2+y^2-2zt)$ from 
$[0:0:z_0:1]$ is the revolution surface of $\mathcal {C'}$
around $V(x,y)$.}}
\end{figure}
\begin{proof}
We have $\mathcal Z=\mathcal R(V(G))$ with $G(r,z,t)=(r^2-2zt)/2$.
Due to Theorem \ref{thmrevolution}, we know that 
$\Sigma_S(\mathcal Z)=V(x,y)\cup\mathcal R(\Sigma_{S_0}(\mathcal C)) $
with $\mathcal C=V(G)\subseteq\mathbb P^2$ and $S_0[0:z_0:1]\in\mathbb P^2$.
Observe that $\mathcal C$ admits the rational parametrization $(u,v)\mapsto [uv:(u^2/2):v^2]$. Due to Theorem
\ref{thmcurve}, $\Sigma_{S_0}(\mathcal C)$ has the
rational parametrization
$ (u,v)\mapsto \Phi_{\mathbf {S}_0,G}([uv:(u^2/2):v^2])$ and
\begin{multline*}
\boldsymbol{\Phi}_{\mathbf{S}_0,G}\left(uv,\frac{u^2}2,v^2\right)
  =\left(2u^3v^3(1-2z_0),\frac 14(4z_0^2v^6+6z_0v^2u^2(v^2-u^2)
    +u^4(u^2+6v^2)),\right.\\
  \left.\frac 12(2z_0-1)v^4(2z_0v^2-3u^2)\right),
\end{multline*}
which parametrizes $\Sigma_{S_0}(\mathcal C)$.
So $\Sigma_{S_0}(\mathcal C)=V(H)$.
\end{proof}
\subsection{Caustic of a cylinder}\label{cyli}
To simplify, we restrict ourselves to the study of a cylindrical 
surface $\mathcal Z=V(F)$ with axis $V(x,y)$.
We will use the fact that $F(x,y,z,t)=G(x,y,t)$ for 
some homogeneous polynomial $G\in\mathbb C[x,y,t]$.
Such a surface $\mathcal Z$ is called the cylinder of axis $V(x,y)$
and of basis $V(G)\subset\mathbb P^2$. We then write $\mathcal Z=Cyl(G)$.
Observe that, in this particular case, the tangent plane to $\mathcal Z$
at $m=[x:y:z:t]$ does not depend on $z$.
\begin{rqe}
If $\mathcal Z=Cyl(G)$ (with $G$ as above) and if
$S[0:0:1:0]\in\mathbb P^3$, then $\mathcal Z\subseteq V(\Delta_{\mathbf{S}}F,
 (F_x^2+F_y^2+F_z^2)\hess F(\mathbf{S},\mathbf{S}))$.
\end{rqe}
\begin{thm}\label{thmcylinder}
Let $S[x_0:y_0:z_0:t_0]\in\mathbb P^3\setminus\{[0:0:1:0]\}$ and let $\mathcal Z=Cyl(G)$
with $G\in\mathbb C[x,y,t]$ an irreducible homogeneous polynomial
of degree $d\ge 2$. Assume that $\mathcal Z\not\subseteq V(\Delta_{\mathbf{S}}F,
 (F_x^2+F_y^2+F_z^2)\hess F(\mathbf{S},\mathbf{S}))$.
We set $S_0[x_0:y_0:t_0]\in\mathbb P^2$.

If $V(G)\not\subseteq V(H_G,N_{\mathbf S_0})$, then
$\Sigma_S(\mathcal Z)=\overline{\sigma(\mathcal Z)}
\cup Cyl(\Sigma_{S_0}(V(G)))$, where $\overline{\sigma(\mathcal Z)}$ 
is the algebraic curve 
corresponding to the Zariski closure of the sets of
orthogonal symmetrics of $S$ 
with respect to the tangent planes to $\mathcal Z$.
Otherwise  
$\Sigma_S(\mathcal Z)=\overline{\sigma(\mathcal Z)}$.
\end{thm}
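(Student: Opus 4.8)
The plan is to exploit that $F(x,y,z,t)=G(x,y,t)$ is independent of $z$, so that the ramification equation (\ref{formequadratique}) on $\mathcal Z$ degenerates into the planar one on $V(G)$ with $z$ surviving as a free linear parameter.

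First I would record the consequences of $F_z\equiv 0$: all second partial derivatives of $F$ involving $z$ vanish, so the $4\times 4$ Hessian matrix of $F$ has a zero row and a zero column; hence $H_F\equiv 0$ in $\mathbb C[x,y,z,t]$ and, by (\ref{gamma0}), $\gamma\equiv 0$. Moreover $\Delta_{\mathbf S}F=\Delta_{\mathbf S_0}G$, $Q(\nabla F)=F_x^2+F_y^2=G_x^2+G_y^2$, $\boldsymbol\kappa(\nabla F)=(G_x,G_y,0,0)$, and thus
$$\boldsymbol\sigma(\mathbf m)=(G_x^2+G_y^2)\cdot\mathbf S-2\,\Delta_{\mathbf S_0}G\cdot(G_x,G_y,0,0),$$
whose first, second and fourth coordinates are those of $\boldsymbol\sigma_{\mathbf S_0,G}(x,y,t)$ and whose third one is $(G_x^2+G_y^2)z_0$; in particular $\boldsymbol\sigma$ depends only on $(x,y,t)$. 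Since the bilinear form $\hess F$ also kills the $z$-direction, $\hess F(\mathbf S,\boldsymbol\sigma)=\hess G(\mathbf S_0,\boldsymbol\sigma_{\mathbf S_0,G})$ and $F_{xx}+F_{yy}+F_{zz}=G_{xx}+G_{yy}$; with (\ref{alpha0})--(\ref{beta0}) this yields $\alpha=\alpha_{\mathbf S_0,G}$ and $\beta=\beta_{\mathbf S_0,G}$ as polynomials in $x,y,t$. Finally, the hypothesis $\mathcal Z\not\subseteq V(\Delta_{\mathbf S}F,(F_x^2+F_y^2+F_z^2)\hess F(\mathbf S,\mathbf S))$ says precisely that $(\alpha,\beta)$ does not vanish identically on $\mathcal Z$ (so (\ref{formequadratique}) is a genuine equation; note this already forces $S\ne[0:0:1:0]$, cf.\ the preceding remark).

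Next I would solve (\ref{formequadratique}): since $\gamma\equiv 0$, $Q_{\mathbf S,F}(\mathbf m,\lambda_0,\lambda_1)=\lambda_0\bigl(\alpha(\mathbf m)\lambda_0+\beta(\mathbf m)\lambda_1\bigr)$, so for $m\in\mathcal Z$ outside the proper closed subvariety $\mathcal Z\cap V(\alpha,\beta)$ its roots in $\mathbb P^1$ are $[0:1]$ and $[-\beta(\mathbf m):\alpha(\mathbf m)]$. Hence, by Definition \ref{deficaustique}, $\Sigma_S(\mathcal Z)=\overline{\sigma(\mathcal Z)}\cup\overline{\psi(\mathcal Z)}$ with $\boldsymbol\psi(\mathbf m):=-\beta(\mathbf m)\cdot\mathbf m+\alpha(\mathbf m)\cdot\boldsymbol\sigma(\mathbf m)$. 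As $\boldsymbol\sigma$ depends only on $(x,y,t)$, $\sigma$ factors through the linear projection $\rho$ from $[0:0:1:0]$, $[x:y:z:t]\mapsto[x:y:t]$; so $\sigma(\mathcal Z)$ is the image of $V(G)$ under a rational map to $\mathbb P^3$ and $\overline{\sigma(\mathcal Z)}$ is exactly the algebraic curve of orthogonal symmetrics of $S$ in the tangent planes of $\mathcal Z$ from the statement. For $\boldsymbol\psi$, Lemma \ref{petitlemme} gives $\boldsymbol\Phi_{\mathbf S_0,G}=-\beta_{\mathbf S_0,G}\cdot\Idd+\alpha_{\mathbf S_0,G}\cdot\boldsymbol\sigma_{\mathbf S_0,G}$, so comparing coordinates $\rho(\psi(m))=\Phi_{\mathbf S_0,G}([x:y:t])\in\Sigma_{S_0}(V(G))$ for all admissible $m$, while the third coordinate of $\boldsymbol\psi(\mathbf m)$ equals $-\beta_{\mathbf S_0,G}(x,y,t)\,z+\alpha_{\mathbf S_0,G}(x,y,t)(G_x^2+G_y^2)(x,y,t)\,z_0$, an affine function of $z$. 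When $\beta_{\mathbf S_0,G}=\frac{2}{(d-1)^2}N_{\mathbf S_0}H_G$ does not vanish identically on $V(G)$, i.e.\ when $V(G)\not\subseteq V(H_G,N_{\mathbf S_0})$, then for generic $[x:y:t]\in V(G)$ (avoiding $\beta_{\mathbf S_0,G}=0$ and the finitely many base points of $\Phi_{\mathbf S_0,G}$) the point $\psi([x:y:z:t])$ sweeps out the whole fiber $\rho^{-1}(\Phi_{\mathbf S_0,G}([x:y:t]))$ as $z$ runs over $\mathbb C$; since these fibers lie over a dense subset of $\Sigma_{S_0}(V(G))$ (Definition \ref{thmcurve}), $\overline{\psi(\mathcal Z)}\supseteq\rho^{-1}(\Sigma_{S_0}(V(G)))=Cyl(\Sigma_{S_0}(V(G)))$, while the reverse inclusion is clear, so $\overline{\psi(\mathcal Z)}=Cyl(\Sigma_{S_0}(V(G)))$. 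If instead $V(G)\subseteq V(H_G,N_{\mathbf S_0})$, then $\beta\equiv 0$ on $\mathcal Z$, the two roots of (\ref{formequadratique}) collapse to the double root $[0:1]$, and $\Sigma_S(\mathcal Z)=\overline{\sigma(\mathcal Z)}$.

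The hardest part will be the (short but bookkeeping-heavy) verification that the $z$-direction contributes nothing to $\hess F$ nor to the Laplacian-type term, i.e.\ that $\beta=\beta_{\mathbf S_0,G}$ exactly; after that the argument parallels the proof of Theorem \ref{thmrevolution}, with $z$ playing linearly the role of the revolution parameter. As there, one must also check that the one-dimensional locus $\mathcal Z\cap V(\alpha,\beta)$ contributes nothing new: its $\rho$-image consists of base points of $\Phi_{\mathbf S_0,G}$ lying on $V(G)$, and over each such point the reflected lines $\mathcal R_m$ project onto the corresponding planar reflected line, which is contained in $\Sigma_{S_0}(V(G))$; likewise the loci $\mathcal Z\cap\mathcal M_{S,\mathcal Z}$, the locus where the two roots coincide, and the indeterminacy of $\Phi_{\mathbf S_0,G}$ do not alter the Zariski closures involved.
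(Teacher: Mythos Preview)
Your proof is correct and follows essentially the same route as the paper: use $F_z\equiv 0$ to kill $H_F$ and hence $\gamma$, identify $\alpha,\beta$ with their planar counterparts via Lemma \ref{petitlemme}, factor $Q_{\mathbf S,F}=\lambda_0(\alpha\lambda_0+\beta\lambda_1)$, and read off the two sheets as $\sigma(\mathcal Z)$ and the $\psi$-image, the latter being the cylinder over $\Sigma_{S_0}(V(G))$ because its $(x,y,t)$-projection is $\Phi_{\mathbf S_0,G}$ while its $z$-coordinate is affine in $z$ with leading coefficient $-\beta_{\mathbf S_0,G}$.

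One small correction: the equivalence you assert, ``$\beta_{\mathbf S_0,G}=\tfrac{2}{(d-1)^2}N_{\mathbf S_0}H_G$ does not vanish identically on $V(G)$, i.e.\ $V(G)\not\subseteq V(H_G,N_{\mathbf S_0})$'', is not right as stated. Since $G$ is irreducible, $N_{\mathbf S_0}H_G$ vanishes on $V(G)$ iff $G\mid N_{\mathbf S_0}H_G$ iff $G\mid N_{\mathbf S_0}$ \emph{or} $G\mid H_G$, i.e.\ iff $V(G)\subseteq V(N_{\mathbf S_0}H_G)$, not iff $V(G)\subseteq V(H_G,N_{\mathbf S_0})$. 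The paper's own argument uses the product condition $V(H_GN_{\mathbf S_0})$, so the comma in the theorem statement appears to be a typo; your case analysis is governed by the correct condition (vanishing of $\beta$ on $\mathcal Z$), only the translation into the hypothesis needs adjusting.
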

\begin{proof}
Observe that, since $F_{xz}=F_{yz}=F_{zz}=F_{zt}=0$, we have $H_F=0$
and so ${\gamma}=0$. 
Let $m[x:y:z:t]\in\mathcal Z\setminus V(\alpha,
\beta)$. We have
${\alpha}=\Delta_{\mathbf S}F=
\Delta_{\mathbf S_0}G\circ h$ and 
${\beta}=
{\beta}_{\mathbf S_0,G}\circ h$
with $h(x,y,z,t)=(x,y,t)$. So (\ref{formequadratique}) becomes
$\lambda_0(\alpha(\mathbf{m})\lambda_0+
\beta(\mathbf{m})\lambda_1)=0$ and
its solutions $[\lambda_0:\lambda_1]\in\mathbb P^1$ are
$[0:1]$ and $[-\beta(\mathbf{m}):
\alpha(\mathbf{m})]$.
The corresponding points on 
$\Sigma_S(\mathcal Z)$ are
$M_1(m):=\sigma(m)$
and $M_2(m)[X_2(m):Y_2(m):Z_2(m):T_2(m)]$
with $[X_2:Y_2:T_2]=\Phi_{S_0,V(G)}$ (due to 
Lemma \ref{petitlemme}) and
$Z_2(m)= \left [-\frac{2H_G(h(m))N_{\mathbf S_0}(h(m))}{(d-1)^{2}}z
    +\Delta_{\mathbf S_0}G(h(m))(G_x^2(h(m))+G_y^2(h(m))z_0\right]$.
Due to Theorem \ref{thmcurve}, the Zariski closure of $\Phi_{S_0,V(G)}
(V(G))$ is $\Sigma_{S_0}(V(G))$.
If $V(G)\not\subseteq V(H_GN_{\mathbf S_0})$, then, for every $[x:y:t]\in
V(G)$, $Z_2(x,y,z,t)$ goes all over $\mathbb C$ when $z$ describes $\mathbb C$.
If $V(G)\subseteq V(H_GN_{\mathbf S_0})$, then, due
to Lemma \ref{petitlemme},
$\boldsymbol{\beta}=0$ on $\mathcal Z$, which implies
that $M_2=M_1$ on $\mathcal Z$.
\end{proof}
\begin{prop}[parabolic cylinder with light at infinity]\label{parabcyl}
The caustic of reflection of $\mathcal Z=V(y^2-2xt)\subset\mathbb P^3$
from $S[1-v^2:2v:z_0:0]$ with $v\ne 0$ is $\overline{\sigma
 (\mathcal Z)}\cup V(H)$, with
\begin{multline*}
H(x,y,z,t)=
4y^{3}(1-v^6)+(-27t^3+108xt^2-72y^{2}t+24xy^{2}-108x^{2}t)(v+v^5)\\
+12y(2x+3t+y)(2x+3t-y)(v^2-v^{4})\\
+2(16x^{3}+27t^3-36x^{2}t-24xy^{2}+216xt^2-144y^{2}t)v^{3},
\end{multline*}
(geometrically, $v$ corresponds to the
tangent of the half-angle of $(1,0)$ with the direction of $S$).
\end{prop}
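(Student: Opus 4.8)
The plan is to deduce the result from Theorem \ref{thmcylinder}, which reduces the problem to the caustic of the underlying planar parabola, and then to carry out that planar computation explicitly via a rational parametrization.

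First I would put $F=y^2-2xt$, so that $\mathcal Z=Cyl(G)$ with $G(x,y,t)=y^2-2xt$, and set $S_0[1-v^2:2v:0]\in\mathbb P^2$. The hypotheses of Theorem \ref{thmcylinder} are easily checked: since $v\ne 0$ we have $S\ne[0:0:1:0]$, and $\Delta_{\mathbf S}F=4vy-2(1-v^2)t\not\equiv 0$, so $\mathcal Z\not\subseteq V(\Delta_{\mathbf S}F,(F_x^2+F_y^2+F_z^2)\hess F(\mathbf S,\mathbf S))$; moreover $H_G$, the determinant of the constant Hessian matrix of $y^2-2xt$, equals $-8\ne 0$, so the exceptional alternative $V(G)\subseteq V(H_G,N_{\mathbf S_0})$ cannot occur. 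Theorem \ref{thmcylinder} then gives $\Sigma_S(\mathcal Z)=\overline{\sigma(\mathcal Z)}\cup Cyl(\Sigma_{S_0}(V(G)))$. Since $G$ involves no $z$, the cylinder $Cyl(\Sigma_{S_0}(V(G)))$ is cut out in $\mathbb P^3$ by the very polynomial in $x,y,t$ defining $\Sigma_{S_0}(V(G))$ in $\mathbb P^2$, so it remains only to prove $\Sigma_{S_0}(V(G))=V(H)$ in $\mathbb P^2$ (the parenthetical interpretation of $v$ as $\tan$ of a half-angle is a mere comment).

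For the planar caustic I would use the parametrization $\phi\colon[u:w]\mapsto[u^2/2:uw:w^2]$ of $V(G)$ together with Definition \ref{thmcurve}, with $d=2$ so $(d-1)^2=1$. A direct computation yields $G_x=-2t$, $G_y=2y$, $\Delta_{\mathbf S_0}G=4vy-2(1-v^2)t$, $N_{\mathbf S_0}=t^2(1+v^2)^2$ (using $t_0=0$), $H_G=-8$, and $\boldsymbol\sigma_{\mathbf S_0,G}=(G_x^2+G_y^2)\mathbf S_0-2\Delta_{\mathbf S_0}G\cdot(G_x,G_y,0)$. Substituting these into $\boldsymbol\Phi_{\mathbf S_0,G}=-2H_GN_{\mathbf S_0}\cdot\Idd+\Delta_{\mathbf S_0}G\cdot\boldsymbol\sigma_{\mathbf S_0,G}$ and composing with $\phi$ gives a triple of binary forms in $(u,w)$; after clearing the common power of $w$ one is left with a triple of cubics which, by Definition \ref{thmcurve}, parametrizes $\Sigma_{S_0}(V(G))$ off a finite set. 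It then remains to eliminate $[u:w]$: I would either form the relevant resultant $\mathrm{Res}_{[u:w]}$ of two coordinate combinations and discard extraneous factors, or, more economically, substitute the parametrized triple directly into the candidate $H$ and check that it vanishes identically in $(u,w,v)$. Since $H$ is irreducible of degree $3$ and the caustic is here a rational cubic, this identity forces $\Sigma_{S_0}(V(G))=V(H)$.

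The main obstacle is this last elimination (or identity verification): the polynomials are bulky and carry the extra parameter $v$, so keeping the algebra honest is the real work. A convenient device is to treat $v$ as a parameter and establish the required identity by specialization — verifying it for enough numerical values of $v$ and then invoking a degree bound in $v$. One should also confirm that $\phi$ does not degenerate toward the axis direction $[1:0:0]$ of the parabola; but this is exactly the excluded value $v=0$, for which the caustic collapses to the focus, so the hypothesis $v\ne 0$ disposes of it.
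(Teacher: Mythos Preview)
Your approach is correct and essentially the same as the paper's: both invoke Theorem~\ref{thmcylinder} to reduce to the planar caustic $\Sigma_{S_0}(V(G))$, parametrize the parabola by $[u:w]\mapsto[u^2/2:uw:w^2]$, compute the caustic map $\boldsymbol\Phi_{\mathbf S_0,G}$ along this parametrization, and then identify the image as $V(H)$. The only substantive difference is in justifying the last step: the paper quotes \cite{fredsoaz1} for $\deg\Sigma_{S_0}(\mathcal C)=3$ and exhibits the explicit parametrization $(X_1,Y_1,Z_1)$, whereas you propose to verify that $H$ vanishes on the parametrized curve and use irreducibility of $H$; either route closes the argument, and your observation that a common factor $w^3$ drops out to leave a triple of cubics is exactly what is happening. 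Your final paragraph is slightly garbled (it is the caustic map, not $\phi$ itself, that degenerates when $v=0$, since then $S_0=[1:0:0]$ is the axis direction and all reflected rays pass through the focus), but this is a side remark and does not affect the proof.
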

\begin{figure}[!ht]
\centering
\makebox{
\includegraphics[scale=.3]{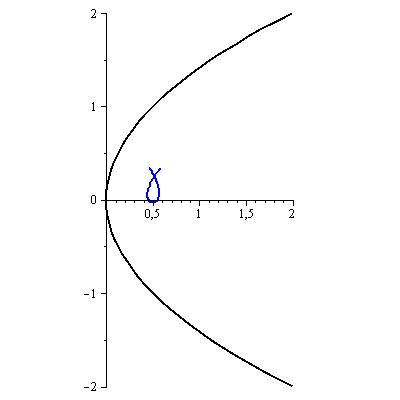}
\includegraphics[scale=.3]{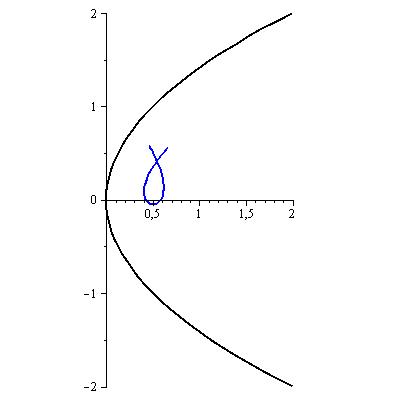}
\includegraphics[scale=.3]{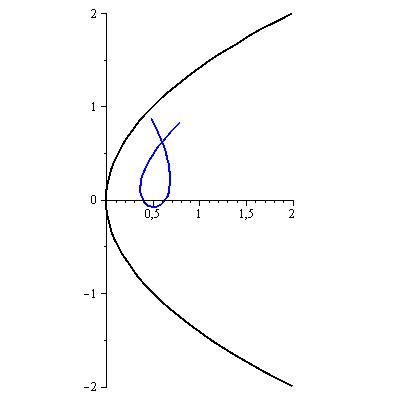}
\includegraphics[scale=.3]{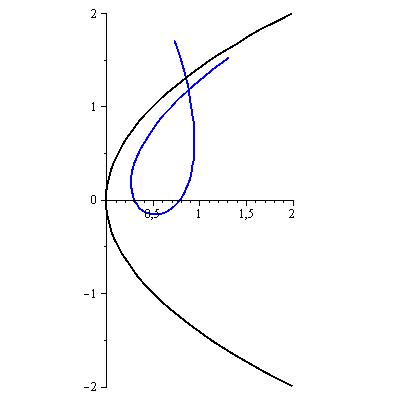}}\\
\caption{\label{fig:presfoyer} {Caustics of $V(H)$ for $t_0=0$ and $x_0+iy_0=e^{i\theta}$ for
$\theta=\frac\pi {50}, \, \frac\pi {30},\, \frac\pi {20}, \, \frac\pi {10}$.} }
\end{figure}
Such surfaces are used in practice to 
concentrate sunrays on a tube (put along the line made of the focal points
of the parabols) in order to heat the water circulating 
in it. Figure \ref{fig:presfoyer} is a transverse representation of this
solar heater, the tube being at the focal point $(1/2,1)$.
\begin{proof}
Let $S_0[1-v^2:2v:0]$ and $\mathcal C:=V(G)\subset \mathbb P^2$
with $G(x,y):=(y^2-2xt)/2$.
Due to Theorem \ref{thmcylinder}, the caustic by reflection of 
$V(y^2-2xt)\subset\mathbb P^3$ from $S$ is $V(x,y)\cup Cyl(\Sigma_{S_0}(\mathcal C))$.
Moreover we know from \cite{fredsoaz1} that $\deg \Sigma_{S_0}(\mathcal C)=3$.
Using the parametrization $\psi:(a,b)\mapsto \left[\frac{a^2}2:ab:b^2\right]$
of $\mathcal C$ together with Theorem \ref{thmcurve}, we conclude
that $\Phi_{S_0,V(G)}\circ\psi(a,b)=[X_1(a,b):Y_1(a,b):Z_1(a,b)]$ 
is a parametrization of $\Sigma_{S'}(\mathcal C)$.
We obtain $\boldsymbol{\sigma}_{\mathbf{S}_0,G}=((y^2-t^2)x_0+2ty_0y,
(t^2-y^2)y_0+2tx_0y$ and so
$$X_1=2v(1-v^2)a^3b^3+12v^2a^2b^4-6v(1-v^2)ab^5 +(1-v^2)^2b^6$$
$$Y_1=-4v^2a^3b^3+6v(1-v^2)a^2b^4+12v^2ab^5-2v(1-v^2)b^6 ,\quad Z_1=2(1+v^2)^2b^6.$$
\end{proof}
To complete the study of this example, let us specify 
$\overline{\sigma(\mathcal Z)}$.
\begin{prop}
Under assumptions of the previous result, we have
\begin{itemize}
\item if $v^2\ne -1$, then $\overline{\sigma(\mathcal Z)}
=\mathcal H^\infty\cap V((x^2+y^2)z_0^2-(v^2+1)^2z^2)$;
\item if $z_0\ne 0$ and $v^2=-1$, then $\overline{\sigma(\mathcal Z)}=\mathcal H^\infty\cap V(y+2x)$,
\item if $z_0= 0$ and $v^2=-1$, then $\overline{\sigma(\mathcal Z)}=\{[2:-v:0:0]\}$.
\end{itemize}
\end{prop}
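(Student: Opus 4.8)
The plan is to compute $\boldsymbol\sigma$ explicitly for $F=(y^2-2xt)/2$ and $\mathbf S=(1-v^2,2v,z_0,0)$ (the data of Proposition \ref{parabcyl}) and then to read off the image. Here $\nabla F=(-t,y,0,-x)$, so $\boldsymbol\kappa(\nabla F)=(-t,y,0,0)$, $Q(\nabla F)=y^2+t^2$ and $\Delta_{\mathbf S}F=2vy-(1-v^2)t$. Substituting into $\boldsymbol\sigma=Q(\nabla F)\cdot\mathbf S-2\Delta_{\mathbf S}F\cdot\boldsymbol\kappa(\nabla F)$ gives
$$\sigma^{(1)}=(1-v^2)(y^2-t^2)+4vyt,\qquad \sigma^{(2)}=-2v(y^2-t^2)+2(1-v^2)yt,$$
$$\sigma^{(3)}=z_0(y^2+t^2),\qquad \sigma^{(4)}=0.$$
Two features stand out: $\sigma^{(4)}\equiv 0$, so $\overline{\sigma(\mathcal Z)}\subseteq\mathcal H^\infty$; and $\sigma^{(1)},\sigma^{(2)},\sigma^{(3)}$ are homogeneous of degree $2$ in the single pair $(y,t)$, with no dependence on $x$ or $z$. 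Since $\sigma$ restricts to a rational map defined on a dense open of $\mathcal Z$ (Proposition \ref{baseR}) and the projection $\mathcal Z\dashrightarrow\mathbb P^1$, $[x:y:z:t]\mapsto[y:t]$, is dominant, $\overline{\sigma(\mathcal Z)}$ equals the Zariski closure of the image of the map $\nu\colon\mathbb P^1\to\mathcal H^\infty$, $[y:t]\mapsto[\sigma^{(1)}:\sigma^{(2)}:\sigma^{(3)}:0]$.

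The second step is to analyse $\nu$ according to the dimension of the span of $\sigma^{(1)},\sigma^{(2)},\sigma^{(3)}$ inside the $3$-dimensional space $H^0(\mathbb P^1,\mathcal O(2))$ with basis $(y^2,yt,t^2)$. Computing the determinant of the corresponding $3\times 3$ coefficient matrix gives $4z_0(1+v^2)^2$, which is exactly what separates the cases. If $z_0\ne0$ and $v^2\ne-1$ the three forms are independent, hence $\nu$ is the degree-$2$ Veronese $[y:t]\mapsto[y^2:yt:t^2]$ followed by a linear automorphism of $\mathbb P^2$, so $\overline{\sigma(\mathcal Z)}$ is a smooth conic of $\mathcal H^\infty$. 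To identify it I would eliminate $(y,t)$: set $u=y^2-t^2$, $w=yt$, $r=y^2+t^2$, use the identity $r^2=u^2+4w^2$, invert the linear system (of determinant $2(1+v^2)^2$) expressing $(\sigma^{(1)},\sigma^{(2)})$ through $(u,w)$ to obtain $u^2+4w^2=\big((\sigma^{(1)})^2+(\sigma^{(2)})^2\big)/(1+v^2)^2$, and note $r^2=(\sigma^{(3)})^2/z_0^2$; this yields exactly $\mathcal H^\infty\cap V\big(z_0^2(x^2+y^2)-(1+v^2)^2z^2\big)$. If $z_0=0$ but $v^2\ne-1$, the forms $\sigma^{(1)},\sigma^{(2)}$ stay independent and $\nu$ maps $\mathbb P^1$ onto the whole line $\mathcal H^\infty\cap V(z)$, which is the reduced zero locus of the same polynomial, so this case is subsumed in the first bullet. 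Finally, if $v^2=-1$, substituting $v^2=-1$ shows that $\sigma^{(1)}$ and $\sigma^{(2)}$ become proportional, so $\overline{\sigma(\mathcal Z)}$ lies on the line of $\mathcal H^\infty$ through $[0:0:1:0]$ determined by that fixed ratio; when $z_0\ne0$ the remaining ratio $[\sigma^{(1)}:\sigma^{(3)}]$ is non-constant, so the image is that whole line $\mathcal H^\infty\cap V(y+2x)$, and when $z_0=0$ one also has $\sigma^{(3)}\equiv0$, so the image collapses to the single point $[2:-v:0:0]$ where this line meets $V(z)$.

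The computations are mechanical; the part that requires genuine care is the elimination in the generic case — making sure the constants combine so that the image is precisely $V\big(z_0^2(x^2+y^2)-(1+v^2)^2z^2\big)$ and not a different multiple — together with the bookkeeping that places the two degenerate strata ($z_0=0$ with $v^2\ne-1$, and $v^2=-1$) into the stated bullets. It is also worth recording, to see why the dimension of the image drops exactly there, that $\nu$ acquires base points precisely on those strata: the two values $[\pm i:1]$, where $\sigma^{(3)}$ vanishes, become common zeros of all three $\sigma^{(i)}$ exactly when $v^2=-1$, and $\sigma^{(3)}$ vanishes identically exactly when $z_0=0$.
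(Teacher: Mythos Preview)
Your proof is correct and follows essentially the same route as the paper: compute $\boldsymbol\sigma$ explicitly, observe that it lands in $\mathcal H^\infty$ and depends only on $[y:t]\in\mathbb P^1$, then eliminate to find the image curve, treating the degenerate case $v^2=-1$ by direct factorisation. The only difference is the elimination tool in the generic case --- the paper takes a resultant in $y$ (with $t=1$), whereas you exploit the identity $(\sigma^{(1)})^2+(\sigma^{(2)})^2=(1+v^2)^2(y^2+t^2)^2$, which is a bit cleaner and explains conceptually why the image is a conic.
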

\begin{proof}
We have 
$\boldsymbol{\sigma}(x,a,z,b)
=\left(\begin{array}{c}
  a^2(1-v^2)+4vab-b^2(1-v^2)=:g_1(a,b)\\a^2(-2v)+2(1-v^2)ab+b^22v=:g_2(a,b)\\
(a^2+b^2)z_0=:g_3(a,b)\\0\end{array}\right).
$

First observe that
$\overline{\sigma(\mathcal Z)}$ is included in $\mathcal H^\infty$.
Second, we compute the resultant in $a$ of $(xg_3(a,1)-g_1(a,1),yg_3(a,1)-g_2(a,1))$ and obtain $4(1+v^2)^2((x^2+y^2)z_0-(v^2+1)^2)$.
If $v^2\ne -1$, this resultant gives the result (by homogeneization with $z$).

Assume now that $v^2=1$. We have
$\boldsymbol{\sigma}(x,a,z,b)=(a+vb)\left(\begin{array}{c}
  2(a+vb)\\-v(a+vb)\\(a-vb)z_0\\0\end{array}\right).$

If $z_0=0$, we have $\boldsymbol{\sigma}(x,a,z,b)=(a+vb)^2(2,-v,0,0)$ and so
$\overline{\sigma(\mathcal Z)}=\{[2:-v:0:0]\}$.
Finally, if $z_0\ne 0$ (still with $v^2=1$), using the fact that $[a-vb:a+vb]$
describes $\mathbb P^1$ when $[a:b]$ moves in $\mathbb P^1$, we obtain
$\overline{\sigma(\mathcal Z)}=\mathcal H^\infty\cap V(y+2x)$.
\end{proof}

\end{appendix}

\end{document}